\def\@cite#1#2{[{{\bfseries #1}\if@tempswa , #2\fi}]}
\renewcommand{\section}{%
\@startsection{section}{1}{\z@}
{0.5truecm plus -1ex minus -.2ex}%
{1.0ex plus .2ex}{\bfseries\large}}
\def\@seccntformat#1{\csname the#1\endcsname.\ }
\numberwithin{equation}{section} 
\newtheorem{thm}{Theorem}[section]
\newtheorem{lem}[thm]{Lemma}
\theoremstyle{definition}
\newtheorem{df}{Definition}[section]
\newtheorem{remark}{Remark}[section]
\newtheorem*{prth1.1}{Proof of Theorem 1.1}
\newtheorem*{prth1.2}{Proof of Theorem 1.2}
\newcommand{\ep}{\varepsilon}
\newcommand{\RN}{\mathbb{R}^N}
\begin{document}
\footnote[0]
    {2010{\it Mathematics Subject Classification}\/. 
    Primary: 35K35, 35K52, 35K59. 
    }
\footnote[0]
    {{\it Key words and phrases}\/: 
    Cahn--Hilliard systems; tumor growth; asymptotic analysis; 
    unbounded domains.  
    }
\begin{center}
    \Large{{\bf 
Asymptotic analysis for Cahn--Hilliard type \\
phase field systems related to tumor growth \\
in general domains 
           }}
\end{center}
\vspace{5pt}
\begin{center}
    Shunsuke Kurima\footnote{Partially supported 
    by JSPS Research Fellowships for Young Scientists (No.\ 18J21006).} \\
    \vspace{2pt}
    Department of Mathematics, 
    Tokyo University of Science\\
    1-3, Kagurazaka, Shinjuku-ku, Tokyo 162-8601, Japan\\
    {\tt shunsuke.kurima@gmail.com}\\
    \vspace{2pt}
\end{center}
\begin{center}    
    \small \today
\end{center}

\vspace{2pt}
\newenvironment{summary}
{\vspace{.5\baselineskip}\begin{list}{}{%
     \setlength{\baselineskip}{0.85\baselineskip}
     \setlength{\topsep}{0pt}
     \setlength{\leftmargin}{12mm}
     \setlength{\rightmargin}{12mm}
     \setlength{\listparindent}{0mm}
     \setlength{\itemindent}{\listparindent}
     \setlength{\parsep}{0pt}
     \item\relax}}{\end{list}\vspace{.5\baselineskip}}
\begin{summary}
{\footnotesize {\bf Abstract.}
This article considers 
a limit system by passing to the limit in 
the following Cahn--Hilliard type phase field system related to tumor growth 
as $\beta\searrow0$: 
 \begin{equation*}
  \begin{cases} 
    \alpha\partial_{t} \mu_{\beta}  
         + \partial_{t} \varphi_{\beta}  
         -\Delta\mu_{\beta} = p(\sigma_{\beta} - \mu_{\beta})  
         & \mbox{in}\ \Omega\times(0, T),
 \\[1mm]
         \mu_{\beta} = \beta\partial_{t} \varphi_{\beta}  
                            +(-\Delta+1)\varphi_{\beta} + \xi_{\beta} 
                            + \pi(\varphi_{\beta}),\ \xi_{\beta} \in B(\varphi_{\beta})
         & \mbox{in}\ \Omega\times(0, T),
 \\[1mm]
         \partial_{t} \sigma_{\beta} 
         -\Delta\sigma_{\beta} = -p(\sigma_{\beta} - \mu_{\beta})   
         & \mbox{in}\ \Omega\times(0, T)  
  \end{cases}
\end{equation*}
in a {\it bounded} or an {\it unbounded} domain $\Omega \subset \RN$   
with smooth bounded boundary. 
Here   
$N\in\mathbb{N}$, $T>0$, $\alpha>0$, $\beta>0$, $p\geq0$, 
$B$ is a maximal monotone graph and $\pi$ is a Lipschitz continuous function. 
In the case that $\Omega$ is a bounded domain, 
$p$ and $-\Delta+1$ 
are replaced with $p(\varphi_{\beta})$ and $-\Delta$, respectively, 
and 
$p$ is a Lipschitz continuous function,     
Colli--Gilardi--Rocca--Sprekels \cite{CGRS-2017} 
have proved existence of solutions to the limit problem 
with this approach 
by applying the Aubin--Lions lemma 
for the compact embedding 
$H^1(\Omega) \hookrightarrow L^2(\Omega)$ 
and the continuous embedding 
$L^2(\Omega) \hookrightarrow (H^1(\Omega))^{*}$.  
However, the Aubin--Lions lemma cannot be applied directly    
when $\Omega$ is an {\it unbounded} domain.  
The present work establishes existence of weak solutions 
to the limit problem 
both  
in the case of {\it bounded} domains and in the case of {\it unbounded} domains.  
To this end 
we construct an applicable theory for both of these two cases 
by noting that the embedding $H^1(\Omega) \hookrightarrow L^2(\Omega)$ 
is not compact in the case that $\Omega$ is an unbounded domain.  
}
\end{summary}
\vspace{10pt}

\newpage
\section{Introduction} \label{Sec1}


\subsection{Background} \label{Subsec1.1} 

The problem 
\begin{equation}\label{P0}\tag{P0}
  \begin{cases} 
    \alpha\partial_{t} \mu + \partial_{t} \varphi 
         -\Delta\mu = p(\varphi)(\sigma - \gamma\mu)  
         & \mbox{in}\ \Omega\times(0, T),
 \\[3mm]
         \mu = \beta\partial_{t}\varphi -\Delta\varphi + G'(\varphi)
         & \mbox{in}\ \Omega\times(0, T),
 \\[3mm]
         \partial_{t} \sigma  
         -\Delta\sigma = -p(\varphi)(\sigma - \gamma\mu)   
         & \mbox{in}\ \Omega\times(0, T),
 \\[3mm]
         \partial_{\nu}\mu = \partial_{\nu}\varphi = \partial_{\nu}\sigma = 0                                   
         & \mbox{on}\ \partial\Omega\times(0, T),
 \\[2.5mm]
        \mu(0) = \mu_0, \varphi(0) = \varphi_{0}, \sigma(0) = \sigma_0                                          
         & \mbox{in}\ \Omega
  \end{cases}
\end{equation}
is a Cahn--Hilliard type phase field system 
related to a tumor growth model which was produced in 
\cite{HZO-2012, HKNZ-2015, WZZ-2014} 
(in \cite{CGH-2015, CGRS-2015, FGR-2015} 
the system was further studied analytically).   
Here $\Omega$ is a three-dimensional bounded domain 
with smooth bounded boundary $\partial\Omega$,  
$\partial_{\nu}$ denotes differentiation with
respect to the outward normal of $\partial\Omega$, 
$p$ is a nonnegative function, 
$G'$ is the first derivative of a nonnegative potential $G$,   
$\alpha>0$, $\beta>0$, $\gamma>0$, 
$T>0$, and
$\mu_0, \varphi_0, \sigma_0$ are given functions.  
The unknown function $\varphi$ is 
an order parameter which can be set as follows: 
\begin{itemize}\setlength{\itemsep}{0cm}
\item $\varphi \simeq 1$ in the tumorous phase. 
\item $\varphi \simeq -1$ in the healthy cell phase. 
\end{itemize}
The unknown function $\mu$ is the related chemical potential 
which is specified by the second equation in \eqref{P0}, 
depending on whether 
$\beta > 0$ (the viscous Cahn--Hilliard equation) 
or 
$\beta=0$ (the Cahn--Hilliard equation) 
(see \cite{CH-1958, ES-1996, EZ-1986}). 
The unknown function $\sigma$ represents the nutrient concentration  typically fulfills as follows: 
\begin{itemize}\setlength{\itemsep}{0cm}
\item $\sigma \simeq 1$ in a nutrient-rich extracellular water phase. 
\item $\sigma \simeq 0$ in a nutrient-poor extracellular water phase. 
\end{itemize}
The function $G_{cl}$ defined by 
\begin{align*}
G_{cl}(r) := \frac{1}{4}(r^2-1)^2 = 
 \frac{1}{4}((r^2-1)^{+})^2 + \frac{1}{4}((1-r^2)^{+})^2 
\quad \mbox{for}\ r\in \mathbb{R}
\end{align*} 
is called the classical double well potential which is a typical example of $G$. 

Recently, 
for a bounded domain $\Omega \subset \mathbb{R}^3$  
Colli--Gilardi--Rocca--Sprekels \cite{CGRS-2017} 
have proved existence of solutions to  
the limit system 
\begin{equation*}
  \begin{cases} 
    \alpha\partial_{t} \mu + \partial_{t} \varphi 
         -\Delta\mu = p(\varphi)(\sigma - \mu)  
         & \mbox{in}\ \Omega\times(0, T),
 \\[3mm]
         \mu = -\Delta\varphi + \xi + \pi(\varphi),\ \xi \in B(\varphi)
         & \mbox{in}\ \Omega\times(0, T),
 \\[3mm]
         \partial_{t} \sigma  
         -\Delta\sigma = -p(\varphi)(\sigma - \mu)   
         & \mbox{in}\ \Omega\times(0, T),
 \\[3mm]
         \partial_{\nu}\mu = \partial_{\nu}\varphi = \partial_{\nu}\sigma = 0                                   
         & \mbox{on}\ \partial\Omega\times(0, T),
 \\[2.5mm]
        \mu(0) = \mu_0, \varphi(0) = \varphi_{0}, \sigma(0) = \sigma_0                                          
         & \mbox{in}\ \Omega
  \end{cases} 
\end{equation*}
by passing to the limit in 
the following Cahn--Hilliard type phase field system  
as $\beta\searrow0$: 
\begin{equation*}
  \begin{cases} 
    \alpha\partial_{t} \mu_{\beta}  
         + \partial_{t} \varphi_{\beta}  
         -\Delta\mu_{\beta} = p(\varphi_{\beta})(\sigma_{\beta} - \mu_{\beta})  
         & \mbox{in}\ \Omega\times(0, T),
 \\[3mm]
         \mu_{\beta} = \beta\partial_{t} \varphi_{\beta}  
                            -\Delta\varphi_{\beta} + \xi_{\beta} 
                            + \pi(\varphi_{\beta}),\ \xi_{\beta} \in B(\varphi_{\beta})
         & \mbox{in}\ \Omega\times(0, T),
 \\[3mm]
         \partial_{t} \sigma_{\beta} 
         -\Delta\sigma_{\beta} = -p(\varphi_{\beta})(\sigma_{\beta} - \mu_{\beta})   
         & \mbox{in}\ \Omega\times(0, T),
 \\[3mm]
         \partial_{\nu}\mu_{\beta} = \partial_{\nu}\varphi_{\beta} 
         = \partial_{\nu}\sigma_{\beta} = 0                                   
         & \mbox{on}\ \partial\Omega\times(0, T),
 \\[2.5mm]
        \mu_{\beta}(0) = \mu_0, \varphi_{\beta}(0) = \varphi_{0}, 
        \sigma_{\beta}(0) = \sigma_0                                          
         & \mbox{in}\ \Omega 
  \end{cases}
\end{equation*}
under the five conditions: 
\begin{enumerate} 
\setlength{\itemsep}{0mm}
 \item[(J1)] $\alpha, \beta \in (0, 1)$.  
 \item[(J2)] The function $p : \mathbb{R} \to \mathbb{R}$ is nonnegative, 
                 bounded and Lipschitz continuous.  
 \item[(J3)] $B \subset \mathbb{R}\times\mathbb{R}$                                
 is a maximal monotone graph with effective domain $D(B)$ 
 and $B=\partial\widehat{B}$, 
 where $\partial\widehat{B}$ denotes the subdifferential of 
 a proper lower semicontinuous convex function 
 $\widehat{B} : \mathbb{R} \to [0, +\infty]$.             
 \item[(J4)] The function $\pi:=\widehat{\pi}'$ is Lipschitz continuous, 
                 where $\widehat{\pi} \in C^1(\mathbb{R})$ is 
                                                             a nonnegative function.  
 \item[(J5)] $\mu_{0}, \sigma_0 \in L^2(\Omega)$, 
 $\varphi_0 \in H^1(\Omega)$  
 and $G(\varphi_{0}) \in L^1(\Omega)$, 
 where $G:=\widehat{B}+\widehat{\pi}$. 
 \end{enumerate} 
In particular, 
they showed 
\begin{align}\label{keyofpre}
\zeta_{\beta}:=\alpha\mu_{\beta} + \varphi_{\beta} 
\to \zeta:=\alpha\mu+\varphi 
\quad \mbox{in}\ L^2(0, T; H) 
\end{align}
as $\beta=\beta_{j}\searrow0$ 
by establishing 
the $L^2(0, T; H^1(\Omega)) \cap H^1(0, T; (H^1(\Omega))^{*})$-estimate 
for $\zeta_{\beta}$  
and by applying the Aubin--Lions lemma 
for the compact embedding 
$H^1(\Omega) \hookrightarrow L^2(\Omega)$ 
and the continuous embedding 
$L^2(\Omega) \hookrightarrow (H^1(\Omega))^{*}$.  
Moreover, they proved that 
\begin{align}\label{previousmethod}
&\int_{\Omega} \bigl((\zeta_{\beta}-\zeta_{\eta})
                         -\alpha(\beta\partial_{t}\varphi_{\beta} 
                                                   + \eta\partial_{t}\varphi_{\eta})\bigr)
                                                                   (\varphi_{\beta} - \varphi_{\eta}) 
\\ \notag 
&= \|\varphi_{\beta} - \varphi_{\eta}\|_{L^2(\Omega)}^2 
    + \alpha\|\nabla (\varphi_{\beta} - \varphi_{\eta})\|_{L^2(\Omega)}^2   
    + \alpha\int_{\Omega} (\xi_{\beta} - \xi_{\eta})(\varphi_{\beta} - \varphi_{\eta}) 
\\ \notag 
    &\,\quad+ \alpha\int_{\Omega} (\pi(\varphi_{\beta}) - \pi(\varphi_{\eta}))
                                                                  (\varphi_{\beta} - \varphi_{\eta}), 
\end{align}  
and hence they could see that 
$\{\varphi_{\beta}\}_{\beta}$ 
satisfies Cauchy's criterion in $L^2(0, T; H)$ 
and then could obtain 
a strong convergence of $\varphi_{\beta}$ to $\varphi$ in $L^2(0, T; H)$ 
which implies that 
$\pi(\varphi_{\beta}) \to \pi(\varphi)$, 
$p(\varphi_{\beta}) \to p(\varphi)$ in $L^2(0, T; H)$ 
as $\beta=\beta_{j}\searrow0$. 

 In the case that 
 $$
 B(r)=\frac{1}{4}\frac{d}{dr}((r^2-1)^{+})^2,\ 
 \widehat{B}(r)=\frac{1}{4}((r^2-1)^{+})^2,\  
 \widehat{\pi}(r)=\frac{1}{4}((1-r^2)^{+})^2,
 $$  
 (J3)-(J5) hold, $G(r)=\frac{1}{4}(r^2-1)^2$, that is, 
 $G$ is the classical double well potential, and 
 $G'(r)=r^3-r$.    
 The $L^{\infty}(0, T; H^1(\Omega))$-estimate for $\varphi_{\beta}$ 
 can be established 
 by using the Poincar\'e--Wirtinger inequality 
 (see e.g., \cite{CGRS-2015, CGRS-2017}). 
 However, in the case that $\Omega \subset \RN$ is an {\it unbounded} domain, 
 the inequality and 
 the Aubin--Lions lemma cannot be applied directly.   
 Thus we cannot show \eqref{keyofpre} 
 in the case of {\it unbounded} domains.    
 Moreover, 
 the classical double well potential does not satisfy (J5) 
 in the case of {\it unbounded} domains.  
 

\subsection{Motivation of this work} \label{Subsec1.2} 

Cahn--Hilliard equations on unbounded domains 
were studied by a few authors (see e.g., \cite{B-2006, FKY-2017, KY1, KY2}).  
In particular, 
Cahn--Hilliard type field systems related to tumor growth on unbounded domains 
have not been studied yet. 
The case of unbounded domains 
has the difficult mathematical point that 
compactness methods cannot be applied directly. 
It would be interesting to construct an applicable theory for the case of 
unbounded domains and to set assumptions for the case of unbounded domains 
by trying to keep some typical examples 
in previous works, that is, in the case of bounded domains   
as much as possible. 
By considering the case of unbounded domains, 
it would be possible to make a new finding 
which is not made in the case of bounded domains.  
Also, the new finding would be useful 
for other studies of partial differential equations. 
This article considers the initial-boundary value problem 
on a {\it bounded} or an {\it unbounded} domain for the limit system    
%
%
 \begin{equation}\label{P}\tag{P}
  \begin{cases} 
    \alpha\partial_{t} \mu + \partial_{t} \varphi 
         -\Delta\mu = p(\sigma - \mu)  
         & \mbox{in}\ \Omega\times(0, T),
 \\[3mm]
         \mu = (-\Delta+1)\varphi + \xi + \pi(\varphi),\ \xi \in B(\varphi)
         & \mbox{in}\ \Omega\times(0, T),
 \\[3mm]
         \partial_{t} \sigma  
         -\Delta\sigma = -p(\sigma - \mu)   
         & \mbox{in}\ \Omega\times(0, T),
 \\[3mm]
         \partial_{\nu}\mu = \partial_{\nu}\varphi = \partial_{\nu}\sigma = 0                                   
         & \mbox{on}\ \partial\Omega\times(0, T),
 \\[2.5mm]
        \mu(0) = \mu_0, \varphi(0) = \varphi_{0}, \sigma(0) = \sigma_0                                          
         & \mbox{in}\ \Omega
  \end{cases}
\end{equation}
by passing to the limit in the following system as $\beta\searrow0$: 
%
%
%
\begin{equation}\label{Pbeta}\tag*{(P)$_{\beta}$}
  \begin{cases} 
    \alpha\partial_{t} \mu_{\beta}  
         + \partial_{t} \varphi_{\beta}  
         -\Delta\mu_{\beta} = p(\sigma_{\beta} - \mu_{\beta})  
         & \mbox{in}\ \Omega\times(0, T),
 \\[3mm]
         \mu_{\beta} = \beta\partial_{t} \varphi_{\beta}  
                            + (-\Delta+1)\varphi_{\beta} + \xi_{\beta} 
                            + \pi(\varphi_{\beta}),\ \xi_{\beta} \in B(\varphi_{\beta})
         & \mbox{in}\ \Omega\times(0, T),
 \\[3mm]
         \partial_{t} \sigma_{\beta} 
         -\Delta\sigma_{\beta} = -p(\sigma_{\beta} - \mu_{\beta})   
         & \mbox{in}\ \Omega\times(0, T),
 \\[3mm]
         \partial_{\nu}\mu_{\beta} = \partial_{\nu}\varphi_{\beta} 
         = \partial_{\nu}\sigma_{\beta} = 0                                   
         & \mbox{on}\ \partial\Omega\times(0, T),
 \\[2.5mm]
        \mu_{\beta}(0) = \mu_0, \varphi_{\beta}(0) = \varphi_{0}, 
        \sigma_{\beta}(0) = \sigma_0                                          
         & \mbox{in}\ \Omega,
  \end{cases}
\end{equation}
where $\Omega$ is a {\it bounded} or an {\it unbounded} domain 
in $\RN$ ($N \in{\mathbb N}$) with smooth bounded boundary $\partial\Omega$ 
(e.g., $\Omega = \mathbb{R}^{N}\setminus \overline{B(0, R)}$,  
where $B(0, R)$ is the open ball with center $0$ and radius $R>0$) or $\Omega=\mathbb{R}^{N}$ or $\Omega=\mathbb{R}_{+}^{N}$, 
$p\geq0$, $\alpha>0$ and $\beta >0$, 
under the following conditions (C1)-(C4):   
%
%
%
 \begin{enumerate} 
 \setlength{\itemsep}{0mm}
 \item[(C1)] $B \subset \mathbb{R}\times\mathbb{R}$                                
 is a maximal monotone graph with effective domain $D(B)$ 
 and $B(r) = \partial\widehat{B}(r)$, where 
 $\partial\widehat{B}$ denotes the subdifferential of 
 a proper lower semicontinuous convex function 
 $\widehat{B} : \mathbb{R} \to [0, +\infty]$ satisfying $\widehat{B}(0) = 0$.    
 \item[(C2)] $\pi : \mathbb{R} \to \mathbb{R}$ is                         
 a Lipschitz continuous function and $\pi(0) = 0$. 
 Moreover, there exists a function $\widehat{\pi} \in C^1(\mathbb{R})$ 
 such that $\pi = \widehat{\pi}'$ and $\widehat{\pi}(0)=0$. 
 \item[(C3)] $\mu_{0}, \sigma_0 \in L^2(\Omega)$, 
 $\varphi_0 \in H^1(\Omega)$  
 and $G(\varphi_{0}) \in L^1(\Omega)$, 
 where $G:=\widehat{B}+\widehat{\pi}$.   
 \item[(C4)] $G(r) + \frac{\|\pi'\|_{L^{\infty}(\mathbb{R})}}{2}r^2 \geq 0$  
 for all $r \in \mathbb{R}$ 
 and $\|\pi'\|_{L^{\infty}(\mathbb{R})}<1$. 
 \end{enumerate} 
In the case that 
\begin{align*}
&G(r)= C_{G}(r^4-2r^2), 
\\ 
&B(r)=4C_{G}r^3,\ \widehat{B}(r) = C_{G}r^4, 
\\ 
&\pi(r)= -4C_{G}r,\ \widehat{\pi}(r)=-2C_{G}r^2, 
\end{align*} 
where $C_{G} \in (0, \frac{1}{4})$ is a constant,  
(C1)-(C4) hold    
and $G'(r)=4C_{G}(r^3-r)$    
(see Section \ref{Subsec1.3}).  

\smallskip

%
%
%
This article puts the Hilbert spaces 
   $$
   H:=L^2(\Omega), \quad V:=H^1(\Omega)
   $$
 with inner products $(u_{1}, u_{2})_{H}:=\int_{\Omega}u_{1}u_{2}\,dx$ 
 ($u_{1}, u_{2} \in H$)  
 and $(v_{1}, v_{2})_{V}:=
 \int_{\Omega}\nabla v_{1}\cdot\nabla v_{2}\,dx + \int_{\Omega} v_{1}v_{2}\,dx$ 
 ($v_{1}, v_{2} \in V$), 
 respectively, 
 and with norms $\|u\|_{H}:=(u, u)_{H}^{1/2}$ ($u\in H$) and 
 $\|v\|_{V}:=(v, v)_{V}^{1/2}$ ($v\in V$), respectively.  
 Moreover, this paper uses  
   $$
   W:=\bigl\{z\in H^2(\Omega)\ |\ \partial_{\nu}z = 0 \quad 
   \mbox{a.e.\ on}\ \partial\Omega\bigr\}.
   $$
 The notation $V^{*}$ denotes the dual space of $V$ with 
 duality pairing $\langle\cdot, \cdot\rangle_{V^*, V}$. 
 Moreover, in this paper, a bijective mapping $F : V \to V^{*}$ and 
 the inner product in $V^{*}$ are defined as 
    \begin{align}
    &\langle Fv_{1}, v_{2} \rangle_{V^*, V} := 
    (v_{1}, v_{2})_{V} \quad \mbox{for all}\ v_{1}, v_{2}\in V, 
    \label{defF}
    \\[1mm]
    &(v_{1}^{*}, v_{2}^{*})_{V^{*}} := 
    \left\langle v_{1}^{*}, F^{-1}v_{2}^{*} 
    \right\rangle_{V^*, V} 
    \quad \mbox{for all}\ v_{1}^{*}, v_{2}^{*}\in V^{*};
    \label{innerVstar}
    \end{align}
 note that $F : V \to V^{*}$ is well-defined by 
 the Riesz representation theorem.  


\subsection{Example} \label{Subsec1.3} 

This article presents the example: 
\begin{align*}
&G(r)= C_{G}(r^4-2r^2), 
\\ 
&B(r)=4C_{G}r^3,\ \widehat{B}(r) = C_{G}r^4, 
\\ 
&\pi(r)= -4C_{G}r,\ \widehat{\pi}(r)=-2C_{G}r^2, 
\end{align*}
where $C_{G}\in(0, \frac{1}{4})$ is a constant.  
These functions satisfy (C1)-(C4). 
Indeed, we have 
 $$
 B(r) = 4C_{G}r^3 = \partial \widehat{B}(r) = \widehat{B}'(r),    
 $$
 which implies (C1).    
 Also, we see that 
 \begin{align*}
 &\pi(r) = -4C_{G}r = \widehat{\pi}'(r), \\[3mm] 
 &|\pi'(r)|=|-4C_{G}|=4C_{G} < 1,       \\[1mm]
 &G(r) + \frac{\|\pi'\|_{L^{\infty}(\mathbb{R})}}{2}r^2 
   = C_{G}r^4 \geq 0,  
 \end{align*}
 and hence (C2) and (C4) hold. 

Therefore (C1)-(C4) hold 
for the functions $G$, $B$, $\widehat{B}$, $\pi$ and $\widehat{\pi}$  
in the example. 


\subsection{Main result for \ref{Pbeta}} \label{Subsec1.4} 

This paper defines weak solutions of \ref{Pbeta} as follows.
%
%
%
%
 \begin{df}        
 A quadruple $(\mu_{\beta}, \varphi_{\beta}, \sigma_{\beta}, \xi_{\beta})$ 
 with 
    \begin{align*}
    &\mu_{\beta}, \sigma_{\beta} \in H^1(0, T; V^{*}) \cap L^2(0, T; V), 
    \\
    &\varphi_{\beta} \in H^1(0, T; H) \cap L^2(0, T; W), 
    \\
    &\xi_{\beta} \in L^2(0, T; H) 
    \end{align*}
 is called a {\it weak solution} of \ref{Pbeta} if 
 $(\mu_{\beta}, \varphi_{\beta}, \sigma_{\beta}, \xi_{\beta})$ 
 satisfies 
    \begin{align}
        & \alpha\bigl\langle (\mu_{\beta})_t, v\bigr\rangle_{V^{*}, V} 
          + \bigl(\partial_{t}\varphi_{\beta}, v\bigr)_{H} 
          + \bigl(\nabla\mu_{\beta}, \nabla v\bigr)_{H} 
          = 
          p(\sigma_{\beta} - \mu_{\beta}, v)_{H} \label{defsolPbe1} \\ \notag 
         &\hspace{9cm}\mbox{a.e.\ on}\ (0, T)\quad  \mbox{for all }\ v\in V,  
     \\[2mm]
        & \mu_{\beta} = \beta\partial_{t}\varphi_{\beta} 
         + (-\Delta + 1)\varphi_{\beta} 
         + \xi_{\beta} + \pi(\varphi_{\beta})\hspace{0.5em}    
         \mbox{and}\hspace{0.5em} \xi_{\beta} \in B(\varphi_{\beta})\quad  
         \mbox{a.e.\ on}\ \Omega\times(0, T), \label{defsolPbe2}
     \\[2mm]
         & \bigl\langle (\sigma_{\beta})_t, v\bigr\rangle_{V^{*}, V} + 
          \bigl(\nabla\sigma_{\beta}, \nabla v\bigr)_{H} 
          = 
          -p(\sigma_{\beta} - \mu_{\beta}, v)_{H}   \quad 
          \mbox{a.e.\ on}\ (0, T)\quad  \mbox{for all }\ v\in V, 
          \label{defsolPbe3}
     \\[2mm]
         & \mu_{\beta}(0) = \mu_0,\ \varphi_{\beta}(0) = \varphi_{0},\ 
        \sigma_{\beta}(0) = \sigma_0   \quad \mbox{a.e.\ on}\ \Omega. 
     \label{defsolPbe4} 
     \end{align}
 \end{df}

This article has two main theorems. 
The first main result is concerned with 
existence and uniqueness of solutions to \ref{Pbeta}. 
 \begin{thm}\label{maintheorem1}
 Assume that  {\rm (C1)-(C4)} hold.  
 Then 
 there exists $\alpha_{0} \in (0, 1)$ such that 
 for all $\alpha \in (0, \alpha_{0})$ and all $\beta \in (0, 1)$    
 there exists a unique weak solution 
 $(\mu_{\beta}, \varphi_{\beta}, \sigma_{\beta}, \xi_{\beta})$ of {\rm \ref{Pbeta}}  
 satisfying                                                 
    \begin{align*}
    &\mu_{\beta}, \sigma_{\beta} \in H^1(0, T; V^{*}) \cap L^2(0, T; V), 
    \\
    &\varphi_{\beta} \in H^1(0, T; H) \cap L^2(0, T; W), 
    \\
    &\xi_{\beta} \in L^2(0, T; H).
    \end{align*}
 Moreover, there exists a constant $M_{1}=M_{1}(T)>0$ such that  
     \begin{align}\label{betaes1}
     &\alpha^{1/2}\|\mu_{\beta}\|_{L^{\infty}(0, T; H)} 
       + \|\nabla \mu_{\beta}\|_{L^2(0, T; H)} 
       + \beta^{1/2}\|\partial_{t}\varphi_{\beta}\|_{L^2(0, T; H)} 
       + \|\varphi_{\beta}\|_{L^{\infty}(0, T; V)} \\ \notag 
     &+ \|(\alpha\mu_{\beta}+\varphi_{\beta})_{t}\|_{L^2(0, T; V^{*})} 
     + \|\sigma_{\beta}\|_{H^1(0, T; V^{*}) \cap L^{\infty}(0, T; H) \cap L^2(0, T; V)} 
     \\ \notag 
     &\leq M_{1}\Bigl(\alpha^{1/2}\|\mu_0\|_{H} + \|\varphi_0\|_{V} 
                     + \|G(\varphi_0)\|_{L^1(\Omega)}^{1/2} 
                     + \|\sigma_0\|_{H} \Bigr) 
     \end{align}
and 
   \begin{align}\label{betaes2}
     &\|\mu_{\beta}\|_{L^2(0, T; V)} + \|\varphi_{\beta}\|_{L^2(0, T; W)} 
     + \|\xi_{\beta}\|_{L^2(0, T; H)}
     \\ \notag 
     &\leq M_{1}\Bigl(\alpha^{1/2}\|\mu_0\|_{H} + \|\varphi_0\|_{V} 
                     + \|G(\varphi_0)\|_{L^1(\Omega)}^{1/2} 
                     + \|\sigma_0\|_{H} + \|\mu_{\beta}\|_{L^2(0, T; H)} \Bigr)
     \end{align}
for all $\alpha \in (0, \alpha_{0})$ and all $\beta \in (0, 1)$. 
 \end{thm}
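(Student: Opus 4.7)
The plan is to construct a solution of \ref{Pbeta} through a Moreau--Yosida approximation of the maximal monotone graph $B$: for $\ep \in (0,1)$ I replace $B$ by its Yosida regularization $B_\ep = \widehat{B}_\ep'$, with $\widehat{B}_\ep$ convex and of class $C^{1,1}$. This yields an approximate problem $(P)_{\beta,\ep}$ in which every nonlinearity is globally Lipschitz. Because the viscous term $\beta\partial_t\varphi_\beta$ is retained and each of the three equations is parabolic with a Lipschitz forcing, existence and uniqueness for $(P)_{\beta,\ep}$ can be obtained by a standard Banach-contraction / time-discretization argument on a short interval and then extended to $[0,T]$ by the a priori bounds; this step needs no compactness and is therefore insensitive to whether $\Omega$ is bounded.

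The uniform bounds \eqref{betaes1} then follow from the natural energy identity: testing the first equation of $(P)_{\beta,\ep}$ by $\mu_\beta^\ep$, the second by $\partial_t\varphi_\beta^\ep$, and the third by $\sigma_\beta^\ep$ and summing, the cross term $(\partial_t\varphi_\beta^\ep,\mu_\beta^\ep)_H$ cancels, the chain rule for $\widehat{B}_\ep$ and $\widehat{\pi}$ produces a time derivative of $\int_\Omega(\widehat{B}_\ep+\widehat{\pi})(\varphi_\beta^\ep)$, and the $p$-contributions combine into the nonnegative dissipation $p\|\sigma_\beta^\ep-\mu_\beta^\ep\|_H^2$. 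Condition (C4) together with $\|\pi'\|_{L^\infty(\mathbb{R})}<1$ forces the resulting energy functional to be coercive on $\alpha\|\mu_\beta^\ep\|_H^2 + \|\varphi_\beta^\ep\|_V^2 + \|\sigma_\beta^\ep\|_H^2$, and Gronwall closes \eqref{betaes1}; the $V^{*}$-bounds on $(\alpha\mu_\beta^\ep + \varphi_\beta^\ep)_t$ and $(\sigma_\beta^\ep)_t$ are then simply read off from the first and third equations. For \eqref{betaes2} I test the second equation by $\xi_\beta^\ep := B_\ep(\varphi_\beta^\ep)$ and use the monotonicity identity $\int_\Omega \Delta\varphi_\beta^\ep\cdot B_\ep(\varphi_\beta^\ep) = -\int_\Omega B_\ep'(\varphi_\beta^\ep)|\nabla\varphi_\beta^\ep|^2 \leq 0$ to bound $\|\xi_\beta^\ep\|_{L^2(0,T;H)}$ in terms of $\|\mu_\beta^\ep\|_{L^2(0,T;H)}$ and quantities already controlled; elliptic regularity for the Neumann problem $(-\Delta+1)\varphi_\beta^\ep = \mu_\beta^\ep - \beta\partial_t\varphi_\beta^\ep - \xi_\beta^\ep - \pi(\varphi_\beta^\ep)$ then supplies the $L^2(0,T;W)$ and $L^2(0,T;V)$ bounds on $\varphi_\beta^\ep$ and $\mu_\beta^\ep$.

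Weak/weak-* compactness produces limits $(\mu_\beta,\varphi_\beta,\sigma_\beta,\xi_\beta)$ in the required function spaces, and the linear parts of \eqref{defsolPbe1}--\eqref{defsolPbe3} pass to the limit trivially. \emph{The main obstacle} is the identification $\xi_\beta \in B(\varphi_\beta)$, which by maximal monotonicity requires strong convergence $\varphi_\beta^\ep \to \varphi_\beta$ in $L^2(0,T;H)$; since $V\hookrightarrow H$ fails to be compact when $\Omega$ is unbounded, the Aubin--Lions lemma is unavailable. My plan is to circumvent this by adapting the Cauchy-sequence technique recalled in \eqref{previousmethod}: for two indices $\ep_1,\ep_2$ I subtract the regularized systems and test the first difference equation by $\mu_\beta^{\ep_1}-\mu_\beta^{\ep_2}$, the second by $\varphi_\beta^{\ep_1}-\varphi_\beta^{\ep_2}$ (so that the monotonicity of each $B_{\ep_i}$ contributes a nonnegative term), and the third by $\sigma_\beta^{\ep_1}-\sigma_\beta^{\ep_2}$; the Lipschitz continuity of $\pi$, the boundedness of $p$, and the energy bounds of the previous step then let me absorb all cross terms into the dissipative ones, provided $\alpha$ is smaller than some $\alpha_0\in(0,1)$ depending only on $\|\pi'\|_{L^\infty(\mathbb{R})}$ and $p$. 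This yields $\|\varphi_\beta^{\ep_1}-\varphi_\beta^{\ep_2}\|_{L^2(0,T;H)}\to 0$ as $\ep_1,\ep_2\to 0$, hence the required strong convergence and the inclusion $\xi_\beta \in B(\varphi_\beta)$.

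Uniqueness is proved by the very same difference scheme applied to two solutions with identical data: the monotonicity of $B$, the Lipschitz continuity of $\pi$, the boundedness of $p$, and the smallness $\alpha<\alpha_0$ combine to a Gronwall inequality forcing the differences to vanish, which simultaneously justifies the choice of $\alpha_0$ announced in the statement.
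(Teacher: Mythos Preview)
Your strategy differs from the paper's in two substantial ways. First, the paper introduces a \emph{second} approximation layer: it replaces $-\Delta$ by its Yosida approximation $(-\Delta)_\lambda$, so that the fully regularized problem $(P)_{\beta,\ep,\lambda}$ becomes a genuine ODE in $H\times H\times H$ and is solved by Cauchy--Lipschitz--Picard (Lemma~\ref{solPbeplam}); it then passes $\lambda\to 0$ to reach $(P)_{\beta,\ep}$. Your one-level scheme skips this and asserts that existence for $(P)_{\beta,\ep}$ is ``standard''; this is plausible but not as immediate as you suggest, since after substituting the second equation into the first one obtains a fourth-order parabolic system rather than a semilinear one. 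Second, for the passage $\ep\to 0$ the paper does \emph{not} use a Cauchy criterion: it exploits the uniform bounds on $\varphi_{\beta,\ep}$, $\partial_t\varphi_{\beta,\ep}$ and $-\Delta\varphi_{\beta,\ep}$ in $L^2(0,T;H)$ and applies Aubin--Lions on arbitrary \emph{bounded} subdomains $D\subset\Omega$ (Lemma~\ref{keylemma2}), obtaining $\varphi_{\beta,\ep}\to\varphi_\beta$ strongly in $L^2(0,T;L^2(D))$, which suffices to identify $\xi_\beta\in B(\varphi_\beta)$ locally and hence globally.

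Your Cauchy-criterion route is in principle a legitimate alternative, but two points in your sketch need repair. (i) The claim that ``the monotonicity of each $B_{\ep_i}$ contributes a nonnegative term'' is incorrect: the cross term $\bigl(B_{\ep_1}(\varphi^{\ep_1})-B_{\ep_2}(\varphi^{\ep_2}),\,\varphi^{\ep_1}-\varphi^{\ep_2}\bigr)_H$ has no sign. What is true, via the resolvent identity $r=\ep B_\ep(r)+J_\ep^B(r)$ and the monotonicity of $B$ applied to the $J_\ep^B$-parts, is the lower bound $\geq -(\ep_1+\ep_2)\bigl(\|B_{\ep_1}(\varphi^{\ep_1})\|_H^2+\|B_{\ep_2}(\varphi^{\ep_2})\|_H^2\bigr)$; combined with the uniform bound on $\|B_\ep(\varphi_\beta^\ep)\|_{L^2(0,T;H)}$ this gives the vanishing remainder you need. (ii) More seriously, testing the first difference equation by $\overline{\mu}:=\mu_\beta^{\ep_1}-\mu_\beta^{\ep_2}$ produces the cross term $(\partial_t\overline{\varphi},\overline{\mu})_H$, which is not controlled by any dissipative quantity in your scheme: neither $\|\partial_t\overline{\varphi}\|_H$ nor $\|\overline{\mu}\|_H$ is small, and replacing $\overline{\mu}$ via the second equation only transfers the difficulty to $(\partial_t\overline{\varphi},\,B_{\ep_1}(\varphi^{\ep_1})-B_{\ep_2}(\varphi^{\ep_2}))_H$, which again has no usable structure. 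The testing that actually closes is the one the paper employs for uniqueness (Lemma~\ref{uniquesolPbep}) and for the $\beta\to 0$ Cauchy estimate (Lemma~\ref{lemCauchy}): set $\theta=\alpha\overline{\mu}+\overline{\varphi}+\overline{\sigma}$, test the \emph{sum} of the first and third difference equations by $F^{-1}\theta\in V$, test the second by $\overline{\varphi}$, and test the third by $\overline{\sigma}$; then all cross terms can indeed be absorbed for $\alpha$ small, and adding the $O(\ep_1+\ep_2)$ remainder from (i) yields the Cauchy estimate. With these two corrections your alternative route goes through.
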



\subsection{Main results for \eqref{P} and error estimates} \label{Subsec1.5}

This article defines weak solutions of \eqref{P} as follows.
%
%
%
 \begin{df}         
 A quadruple $(\mu, \varphi, \sigma, \xi)$ with 
    \begin{align*}
    &\mu \in L^{\infty}(0, T; H) \cap L^2(0, T; V), 
    \\
    &\varphi \in L^{\infty}(0, T; V) \cap L^2(0, T; W), 
    \\
    &\alpha\mu+\varphi \in H^1(0, T; V^{*}), 
    \\
    &\sigma \in H^1(0, T; V^{*}) \cap L^2(0, T; V), 
    \\
    &\xi \in L^2(0, T; H)
    \end{align*}
 is called a {\it weak solution} of \eqref{P} 
 if $(\mu, \varphi, \sigma, \xi)$ satisfies 
    \begin{align}
        & \bigl\langle (\alpha\mu+\varphi)_t, v\bigr\rangle_{V^{*}, V} + 
          \bigl(\nabla\mu, \nabla v\bigr)_{H} 
          = 
          p(\sigma - \mu, v)_{H}   \quad 
          \mbox{a.e.\ on}\ (0, T)\quad  \mbox{for all }\ v\in V, 
          \label{defsolP1}
     \\[2mm]
        & \mu = (-\Delta + 1)\varphi 
         + \xi + \pi(\varphi)\quad  
         \mbox{and}\quad \xi \in B(\varphi)\qquad  
         \mbox{a.e.\ on}\ \Omega\times(0, T),  \label{defsolP2}
     \\[2mm]
        & \bigl\langle \sigma_t, v\bigr\rangle_{V^{*}, V} + 
          \bigl(\nabla\sigma, \nabla v\bigr)_{H} 
          = 
          -p(\sigma - \mu, v)_{H}   \quad 
          \mbox{a.e.\ on}\ (0, T)\quad  \mbox{for all }\ v\in V, 
          \label{defsolP3}
     \\[2mm]
        & (\alpha\mu+\varphi)(0) = \alpha\mu_{0} + \varphi_0,\ 
           \sigma(0) = \sigma_0 \quad \mbox{a.e.\ on}\ \Omega. 
         \label{defsolP4}
     \end{align}
 \end{df}

The second main result asserts existence and uniqueness of solutions to \eqref{P} 
and the error estimate 
between the solution of \eqref{P} and the solution of \ref{Pbeta}. 
 \begin{thm}\label{maintheorem2}
 Assume {\rm (C1)-(C4)} and let $\alpha_0$ be as in Theorem \ref{maintheorem1}. 
Then there exists $\alpha_{00} \in (0, \alpha_{0}]$ such that 
for all $\alpha \in (0, \alpha_{00})$ 
there exists a unique weak solution $(\mu, \varphi, \sigma, \xi)$ of 
 {\rm \eqref{P}} satisfying
     \begin{align*}
    &\mu \in L^{\infty}(0, T; H) \cap L^2(0, T; V), 
    \\
    &\varphi \in L^{\infty}(0, T; V) \cap L^2(0, T; W), 
    \\
    &\alpha\mu+\varphi \in H^1(0, T; V^{*}), 
    \\
    &\sigma \in H^1(0, T; V^{*}) \cap L^2(0, T; V), 
    \\
    &\xi \in L^2(0, T; H). 
    \end{align*}
Moreover, for all $\alpha \in (0, \alpha_{00})$ there exists a constant $M_{2}=M_{2}(\alpha, T)>0$ such that 
     \begin{align}\label{eres}
     &\|\mu_{\beta}-\mu\|_{L^2(0, T; H)} + \|\varphi_{\beta}-\varphi\|_{L^2(0, T; V)} 
     + \|\sigma_{\beta}-\sigma\|_{L^{\infty}(0, T; H) \cap L^2(0, T; V)} \\ \notag  
     &+\|
           (\alpha\mu_{\beta}+\varphi_{\beta}+\sigma_{\beta}) 
           - (\alpha\mu+\varphi+\sigma)
         \|_{L^{\infty}(0, T; V^{*})}  
     \leq M_{2}\beta^{1/2} 
     \end{align}
 for all $\beta \in (0, 1)$. 
 \end{thm}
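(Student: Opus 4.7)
The plan is to prove existence, uniqueness, and the error estimate \eqref{eres} together, since all three stem from a single quantitative Cauchy-type identity inspired by \eqref{previousmethod}. I first extract weak and weak-$*$ limits from \eqref{betaes1}--\eqref{betaes2} of Theorem \ref{maintheorem1}: along a subsequence $\beta_j \searrow 0$, I obtain candidates $(\mu,\varphi,\sigma,\xi)$ in the spaces required by the theorem, together with $\beta_j\partial_t\varphi_{\beta_j} \to 0$ strongly in $L^2(0,T;H)$ and $\alpha\mu_{\beta_j}+\varphi_{\beta_j} \rightharpoonup \alpha\mu+\varphi$ in $H^1(0,T;V^{*})$.

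The hard part is obtaining strong convergence of $\varphi_{\beta_j}$, which is needed to identify the inclusion $\xi \in B(\varphi)$ and to pass to the limit in $\pi(\varphi_{\beta_j})$. Since $\Omega$ may be unbounded, Aubin--Lions is unavailable, and I exploit instead the $+1$ in $(-\Delta+1)\varphi_\beta$, which, unlike the $-\Delta$ of \eqref{previousmethod}, produces the full $V$-norm in the resulting identity. Writing $\zeta_\beta := \alpha\mu_\beta+\varphi_\beta$, substituting $\alpha\mu_\beta = \zeta_\beta - \varphi_\beta$ into the second equation of \ref{Pbeta}, differencing at indices $\beta$ and $\eta$, testing against $\varphi_\beta - \varphi_\eta$, and integrating by parts yield
\begin{align*}
&\|\varphi_\beta - \varphi_\eta\|_H^2 + \alpha\|\varphi_\beta - \varphi_\eta\|_V^2 + \alpha(\xi_\beta - \xi_\eta, \varphi_\beta - \varphi_\eta)_H \\
&\quad + \alpha(\pi(\varphi_\beta) - \pi(\varphi_\eta), \varphi_\beta - \varphi_\eta)_H = \bigl((\zeta_\beta - \zeta_\eta) - \alpha(\beta\partial_t\varphi_\beta - \eta\partial_t\varphi_\eta), \varphi_\beta - \varphi_\eta\bigr)_H.
\end{align*}
Monotonicity of $B$ makes the $\xi$-term nonnegative, and (C4) together with $\alpha_{00}$ chosen small enough that $\alpha\|\pi'\|_{L^\infty(\mathbb{R})} < 1$ allows the $\pi$-term to be absorbed. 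Integrating in time, applying Young's inequality, and using $\beta^{1/2}\|\partial_t\varphi_\beta\|_{L^2(0,T;H)} \leq C$ from \eqref{betaes1}, the problem reduces to estimating $\|\zeta_\beta - \zeta_\eta\|_{L^2(0,T;V^{*})}$. For this I would test the difference of the first equations of \ref{Pbeta} against $F^{-1}(\zeta_\beta - \zeta_\eta) \in V$, producing $\tfrac{1}{2}\tfrac{d}{dt}\|\zeta_\beta - \zeta_\eta\|_{V^{*}}^2$ plus cross terms controllable by $\|\mu_\beta - \mu_\eta\|_H$ and $\|\sigma_\beta - \sigma_\eta\|_H$; a parallel test of the $\sigma$-equation difference against $\sigma_\beta - \sigma_\eta$ then closes a Gronwall loop and yields the quantitative bound
\begin{equation*}
\|\varphi_\beta - \varphi_\eta\|_{L^2(0,T;V)} + \|\zeta_\beta - \zeta_\eta\|_{L^\infty(0,T;V^{*})} + \|\sigma_\beta - \sigma_\eta\|_{L^\infty(0,T;H)\cap L^2(0,T;V)} \leq C(\beta^{1/2} + \eta^{1/2}).
\end{equation*}

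With this Cauchy property, $\varphi_{\beta_j} \to \varphi$ strongly in $L^2(0,T;V)$, so $\pi(\varphi_{\beta_j}) \to \pi(\varphi)$ in $L^2(0,T;H)$ and $\xi \in B(\varphi)$ follows from the standard Minty--Browder argument via $\limsup_{j\to\infty}\int_0^T(\xi_{\beta_j}, \varphi_{\beta_j})_H\,dt = \int_0^T(\xi, \varphi)_H\,dt$. All remaining terms pass to the limit linearly, yielding existence. Uniqueness follows from the same identity applied to two solutions (formally with $\beta = \eta = 0$) combined with a $V^{*}$-Gronwall estimate on the $\zeta$- and $\sigma$-equations, provided $\alpha_{00}$ is further restricted to close the absorption. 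Finally, \eqref{eres} is obtained by letting $\eta \to 0$ in the Cauchy bound above and invoking weak lower semicontinuity of the norms; the $\|\mu_\beta-\mu\|_{L^2(0,T;H)}$-bound comes out of the same Gronwall argument.

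The recurring obstacle is of course the absence of a compact embedding on unbounded $\Omega$; it is bypassed by the Hilbert structure on $V^{*}$ furnished by the isomorphism $F = -\Delta + 1\colon V \to V^{*}$ together with the identity above, whose $+1$-contribution delivers the full $V$-norm directly, without any appeal to a Poincar\'e--Wirtinger inequality.
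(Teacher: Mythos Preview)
Your proposal is correct and follows essentially the same route as the paper: a quantitative Cauchy criterion in $\beta$ obtained by combining a $V^{*}$-energy identity (testing against $F^{-1}$ of the aggregated variable), the second-equation identity tested against $\overline{\varphi}$, and the $\sigma$-difference tested against $\overline{\sigma}$, closed by Gronwall, after which existence, uniqueness, and \eqref{eres} all follow by passage to the limit. The only difference is organizational: the paper uses $\theta:=\alpha\overline{\mu}+\overline{\varphi}+\overline{\sigma}$ (summing the first and third equations so that the reaction terms cancel) in place of your $\zeta:=\alpha\overline{\mu}+\overline{\varphi}$, and it verifies the initial condition \eqref{defsolP4} via the operator $\tilde{J}_{1}^{1/2}$ and local compactness (Lemma~\ref{keylemmaVstar}), whereas in your scheme it already follows from the $L^{\infty}(0,T;V^{*})$ and $L^{\infty}(0,T;H)$ strong convergences furnished by the Cauchy bound.
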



\subsection{Outline of this paper} \label{Subsec1.6}  

Though the main theorems of this work are almost the same as 
\cite[Theorems 2.2, 2.3 and 2.7]{CGRS-2017}, 
we cannot prove Theorems \ref{maintheorem1} and \ref{maintheorem2} 
in the same way as in the previous work (\cite{CGRS-2017}) 
because the embedding $H^1(\Omega) \hookrightarrow L^2(\Omega)$ 
is not compact in the case that $\Omega$ is an unbounded domain. 
Therefore this paper constructs an applicable theory 
for not only the case of bounded domains 
but also the case of unbounded domains. 
The strategy in the proof of Theorem \ref{maintheorem1} is as follows. 
To establish existence of solutions to \ref{Pbeta} 
we consider the approximation 
\begin{equation}\label{Pbeplam}\tag*{(P)$_{\beta, \ep, \lambda}$}
  \begin{cases} 
    \alpha\partial_{t} \mu_{\beta, \ep, \lambda} 
         + \partial_{t} \varphi_{\beta, \ep, \lambda}  
         +(-\Delta)_{\lambda}\mu_{\beta, \ep, \lambda} 
         = p(\sigma_{\beta, \ep, \lambda} 
              - \mu_{\beta, \ep, \lambda})  
         & \mbox{in}\ \Omega\times(0, T),
 \\[0mm]
         \mu_{\beta, \ep, \lambda} 
         = \beta\partial_{t}\varphi_{\beta, \ep, \lambda} 
            + ((-\Delta)_{\lambda}+1)\varphi_{\beta, \ep, \lambda} 
            + G_{\ep}'(\varphi_{\beta, \ep, \lambda}) 
         & \mbox{in}\ \Omega\times(0, T),
 \\[1mm]
         \partial_{t} \sigma_{\beta, \ep, \lambda}  
         +(-\Delta)_{\lambda}\sigma_{\beta, \ep, \lambda} 
         = -p(\sigma_{\beta, \ep, \lambda} 
                - \mu_{\beta, \ep, \lambda})   
         & \mbox{in}\ \Omega\times(0, T), 
 \\[0mm]
        \mu_{\beta, \ep, \lambda}(0) = \mu_{0\ep}, 
        \varphi_{\beta, \ep, \lambda}(0) = \varphi_{0}, 
        \sigma_{\beta, \ep, \lambda}(0) = \sigma_{0\ep}                                          
         & \mbox{in}\ \Omega,
  \end{cases}
\end{equation}
where $\ep>0$, $\lambda > 0$, $(-\Delta)_{\lambda}$ is the Yosida approximation 
of the Neumann Laplacian $-\Delta$,  
$G_{\ep} = \widehat{B_{\ep}} + \widehat{\pi}$,    
$\widehat{B_{\ep}} : \mathbb{R} \to \mathbb{R}$ is 
the Moreau--Yosida regularization of $\widehat{B}$ 
(see Remark \ref{MYreg}), 
$\mu_{0\ep}:=(1-\ep\Delta)^{-1/2}\mu_{0}$ and 
$\sigma_{0\ep}:=(1-\ep\Delta)^{-1/2}\sigma_{0}$. 
We can show that 
there exists a unique solution $(\mu_{\beta, \ep, \lambda}, 
           \varphi_{\beta, \ep, \lambda}, \sigma_{\beta, \ep, \lambda})$ 
of \ref{Pbeplam} such that 
$\mu_{\beta, \ep, \lambda}, 
\varphi_{\beta, \ep, \lambda}, 
\sigma_{\beta, \ep, \lambda} \in C^{1}([0, T]; H)$ 
by applying the Cauchy--Lipschitz--Picard theorem. 
The key to the proof of existence of solutions to \ref{Pbep} 
(see Definition \ref{defsolPbep}) is to prove that 
$\{\varphi_{\beta, \ep, \lambda}\}_{\lambda}$, 
$\{\partial_{t}\varphi_{\beta, \ep, \lambda}\}_{\lambda}$,    
$\{(-\Delta)_{\lambda}\varphi_{\beta, \ep, \lambda}\}_{\lambda}$ 
are bounded in $L^2(0, T; H)$ 
and 
\begin{align*}
\varphi_{\beta, \ep, \lambda} 
= \lambda(-\Delta)_{\lambda}\varphi_{\beta, \ep, \lambda} 
   + (1-\lambda\Delta)^{-1}\varphi_{\beta, \ep, \lambda} 
\to \varphi_{\beta, \ep} 
\quad \mbox{in}\ L^2(0, T; L^2(D))   
\end{align*} 
as $\lambda=\lambda_{j}\searrow0$  
by using the Aubin--Lions lemma for 
$\{(1-\lambda\Delta)^{-1}\varphi_{\beta, \ep, \lambda}\}_{\lambda}$ 
and the compact embedding $H^1(D) \hookrightarrow L^2(D)$, 
where $D \subset \Omega$ is a bounded domain with smooth boundary.    
In particular, the key to showing the initial condition in \ref{Pbep}  
is to use the operator $(1-\Delta)^{-1/2} : H \to V$ and 
the compact embedding 
$H^1(E) \hookrightarrow L^2(E)$, 
where $E \subset \Omega$ is an arbitrary bounded domain with 
smooth boundary.   
Indeed, we can obtain that 
\begin{align*}
(1-\Delta)^{-1/2}\mu_{\beta, \ep, \lambda} 
\to (1-\Delta)^{-1/2}\mu_{\beta, \ep} 
\quad &\mbox{in}\ C([0, T]; L^2(E)), 
\\ 
(1-\Delta)^{-1/2}\varphi_{\beta, \ep, \lambda} 
\to (1-\Delta)^{-1/2}\varphi_{\beta, \ep} 
\quad &\mbox{in}\ C([0, T]; L^2(E)), 
\\   
(1-\Delta)^{-1/2}\sigma_{\beta, \ep, \lambda} 
\to (1-\Delta)^{-1/2}\sigma_{\beta, \ep} 
\quad &\mbox{in}\ C([0, T]; L^2(E)) 
\end{align*}
as $\lambda=\lambda_{j}\searrow0$ 
by applying the Ascoli--Arzela theorem for the compact embedding 
$H^1(E) \hookrightarrow L^2(E)$, 
and hence we can verify the initial condition in \ref{Pbep}.  
At the moment, we do not know whether 
the strong convergence 
\begin{align}\label{KeyforTheorem1.1}
\varphi_{\beta, \ep} \to \varphi_{\beta}  
\quad \mbox{in}\ L^2(0, T; L^2(D))   
\end{align} 
as $\ep=\ep_{j}\searrow0$,  
where $D \subset \Omega$ is a bounded domain with 
smooth boundary,    
can be proved as in  
\cite[Proof of Theorem 2.2]{CGRS-2017} or not. 
Indeed, by calculating 
$\int_{D} (\zeta_{\beta, \ep}-\zeta_{\beta, \ep'})
                                                 (\varphi_{\beta, \ep} - \varphi_{\beta, \ep'})$ 
as in \eqref{previousmethod},  
where $\zeta_{\beta, \ep}:=\alpha\mu_{\beta, \ep} + \varphi_{\beta, \ep}$, 
the term  
$$
- \alpha\int_{\partial D} 
           (\varphi_{\beta, \ep} - \varphi_{\beta, \ep'})
             \nabla(\varphi_{\beta, \ep} - \varphi_{\beta, \ep'})\cdot \nu_{\partial D}
$$ 
appears because of integration by parts on $D$.  
However, it would be difficult to estimate this term properly.    
Therefore, in this paper, noting that 
$$
\varphi_{\beta, \ep} = \ep(-\Delta)_{\ep}\varphi_{\beta, \ep} 
                 + (1-\ep\Delta)^{-1}\varphi_{\beta, \ep},  
$$
we obtain \eqref{KeyforTheorem1.1} by proving that 
$\{\varphi_{\beta, \ep}\}_{\ep}$, 
$\{\partial_{t}\varphi_{\beta, \ep}\}_{\ep}$, 
$\{-\Delta\varphi_{\beta, \ep}\}_{\ep}$ 
are bounded in $L^2(0, T; H)$ 
and by using the Aubin--Lions lemma for 
$\{(1-\ep\Delta)^{-1}\varphi_{\beta, \ep}\}_{\ep}$  
and the compact embedding $H^1(D) \hookrightarrow L^2(D)$.  
To confirm that $\mu_{\beta}(0)=\mu_{0}$ and $\sigma_{\beta}(0)=\sigma_{0}$ 
in $H$ 
we use not the operator $(1-\Delta)^{-1/2} : H \to V$ 
but the operator $\tilde{J}^{1/2}_{1} : V^{*} \to H$ 
($\tilde{J}_{\lambda}:=\bigl(I + \lambda\tilde{A}\bigr)^{-1}$, $\tilde{A}:=F-I$)  
because $\{\partial_{t}\mu_{\beta, \ep}\}_{\ep}$,  
$\{\partial_{t}\sigma_{\beta, \ep}\}_{\ep}$  
are bounded not in $L^2(0, T; H)$ but in $L^2(0, T; V^{*})$.   
The strategy in the proof of Theorem \ref{maintheorem2} is as follows. 
The key to the proof of existence of solutions to \eqref{P} 
is to obtain a strong convergence of $\varphi_{\beta}$. 
Indeed, we can confirm Cauchy's criterion for solutions of \ref{Pbeta}  
in reference to \cite[Proof of Theorem 2.3]{CGRS-2017}.   
The key to verifying \eqref{defsolP4}  
is to use the operator $\tilde{J}^{1/2}_{1} : V^{*} \to H$ and 
the compact embedding 
$H^1(E) \hookrightarrow L^2(E)$,  
where $E \subset \Omega$ is an arbitrary bounded domain with 
smooth boundary.   
Indeed, since $\{(\alpha\mu_{\beta}+\varphi_{\beta})_{t}\}_{\beta}$, 
$\{(\sigma_{\beta})_{t}\}_{\beta}$ are bounded in $L^2(0, T; V^{*})$, 
we can obtain that 
\begin{align*}
\tilde{J}^{1/2}_{1}(\alpha\mu_{\beta}+\varphi_{\beta})
\to \tilde{J}^{1/2}_{1}(\alpha\mu+\varphi),\  
\tilde{J}^{1/2}_{1}\sigma_{\beta} \to \tilde{J}^{1/2}_{1}\sigma
\quad \mbox{in}\ C([0, T]; L^2(E)) 
\end{align*}  
as $\beta=\beta_{j}\searrow0$ 
by applying the Ascoli--Arzela theorem for the compact embedding 
$H^1(E) \hookrightarrow L^2(E)$.  
Thus we can show \eqref{defsolP4}.   

This paper is organized as follows. 
In Section \ref{Sec2} we give useful results for 
proving the main theorems. 
Sections \ref{Sec3} and \ref{Sec4} are devoted to the proofs of 
Theorems \ref{maintheorem1} and \ref{maintheorem2}, respectively.

 \section{Preliminaries}\label{Sec2}

In this section we will provide some results which will be used later for 
the proofs of Theorems \ref{maintheorem1} and \ref{maintheorem2}. 
 \begin{lem}[{\cite[Section 8, Corollary 4]{Si-1987}}] \label{Ascoli}                                                                   
 Assume that 
 $$
 X \subset Z \subset Y \ \mbox{with compact embedding}\ X \hookrightarrow Z\ 
 \mbox{$($$X$, $Z$ and $Y$ are Banach spaces$)$.}
 $$
 \begin{enumerate}
 \setlength{\itemsep}{-0.5mm}
 \item[{\rm (i)}] Let $K$ be bounded in $L^p(0, T; X)$ and 
 $\{\frac{\partial v}{\partial t}\ |\ v\in K \}$ 
 be bounded in $L^1(0, T; Y)$ with some constant $1\leq p<\infty$. 
 Then $K$ is relatively compact in $L^p(0, T; Z)$. 
 \item[{\rm (ii)}] Let $K$ be bounded in $L^{\infty}(0, T; X)$ and 
 $\{\frac{\partial v}{\partial t}\ |\ v\in K \}$ 
 be bounded in $L^r(0, T; Y)$ with some constant $r>1$. 
 Then $K$ is relatively compact in $C([0, T]; Z)$. 
 \end{enumerate}
 \end{lem}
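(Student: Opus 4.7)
This lemma is attributed to Simon's paper, so the cleanest approach is to cite it directly; the plan below sketches how one would actually derive the two statements, following the standard Aubin--Lions--Simon strategy based on Ehrling's lemma together with the Fr\'echet--Kolmogorov and Arzel\`a--Ascoli compactness criteria.

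The plan is to first establish Ehrling's lemma: from the compact embedding $X \hookrightarrow Z$ and the continuous embedding $Z \hookrightarrow Y$, a standard contradiction argument yields that for every $\eta>0$ there exists $C_{\eta}>0$ such that
\begin{equation*}
\|u\|_{Z} \leq \eta \|u\|_{X} + C_{\eta}\|u\|_{Y} \quad \text{for all } u \in X.
\end{equation*}
This inequality is the key quantitative tool that converts derivative bounds in the weak space $Y$ into compactness in the intermediate space $Z$, while exploiting the bound in the strong space $X$.

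For (i), I would apply the Fr\'echet--Kolmogorov criterion in $L^{p}(0,T;Z)$. Boundedness of $K$ in $L^{p}(0,T;Z)$ is immediate from the bound in $L^{p}(0,T;X)$ and the continuous embedding $X\hookrightarrow Z$. To obtain uniform $L^{p}$-continuity of translations, I would write $v(t+h)-v(t) = \int_{t}^{t+h} v'(s)\,ds$, so that $\|v(t+h)-v(t)\|_{Y} \leq \int_{t}^{t+h}\|v'(s)\|_{Y}\,ds$; this is small in the appropriate sense because $v'$ is bounded in $L^{1}(0,T;Y)$. Ehrling's lemma then gives
\begin{equation*}
\|v(\cdot+h)-v\|_{L^{p}(0,T-h;Z)} \leq \eta\|v(\cdot+h)-v\|_{L^{p}(0,T-h;X)} + C_{\eta}\|v(\cdot+h)-v\|_{L^{p}(0,T-h;Y)},
\end{equation*}
where the first term is controlled uniformly by $2\eta\|v\|_{L^{p}(0,T;X)}$ and the second tends to $0$ as $h\to 0$ uniformly in $v\in K$. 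Choosing $\eta$ small proves the required equi-continuity, and Fr\'echet--Kolmogorov yields relative compactness in $L^{p}(0,T;Z)$.

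For (ii), I would use the Arzel\`a--Ascoli theorem in $C([0,T];Z)$. Pointwise relative compactness in $Z$ at each $t$ follows from the $L^{\infty}(0,T;X)$-bound together with the compactness of $X\hookrightarrow Z$ (after a careful argument showing that representatives are well defined for all $t$, which in turn follows from the equicontinuity below). For equicontinuity, H\"older's inequality gives $\|v(t)-v(s)\|_{Y} \leq |t-s|^{1-1/r}\|v'\|_{L^{r}(0,T;Y)}$, which is uniformly small as $|t-s|\to 0$ because $r>1$. Applying Ehrling's lemma to $v(t)-v(s)$ and choosing $\eta$ small then closes the argument. The main delicate point is the combination of the H\"older-in-time estimate (which requires $r>1$ and fails in the borderline case $r=1$, explaining why (ii) is strictly stronger than (i)) with the right choice of $\eta$ in Ehrling's inequality; aside from this, the proof is a direct application of classical compactness criteria, so I would simply refer the reader to \cite{Si-1987} for the full argument.
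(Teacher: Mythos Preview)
The paper does not prove this lemma at all: it is stated with the attribution \cite[Section 8, Corollary 4]{Si-1987} and used as a black box, with no argument given. Your proposal already anticipates this (``the cleanest approach is to cite it directly''), and the sketch you add --- Ehrling's lemma, translation estimates via the derivative bound, then Fr\'echet--Kolmogorov for (i) and Arzel\`a--Ascoli for (ii) --- is a correct outline of the standard proof and goes beyond what the paper itself provides.
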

%
 \begin{lem}\label{ra}
  Let $\lambda>0$ and put 
 \begin{align*}
 &J_{\lambda}:=(I - \lambda \Delta)^{-1} : H \to H, \quad
    (-\Delta)_{\lambda}:= \frac{1}{\lambda}(I - J_{\lambda}) : H \to H,  
 \\ 
 &\tilde{A}:=F-I : V \to V^{*}, \quad 
 \tilde{J}_{\lambda}:=
 \bigl(I + \lambda\tilde{A}\bigr)^{-1} : V^{*} \to V^{*}. 
 \end{align*}
 Then we have 
 \begin{align}
 &\tilde{J}_{\lambda}|_{H} = J_{\lambda}, \label{tool1}\\
 &\|J^{1/2}_{\lambda}v\|_{H} \leq \|v\|_{H}, \label{tool2}\\
 &\|J^{1/2}_1v\|_{V} = \|v\|_{H}, \label{tool3} \\
 &\|\tilde{J}^{1/2}_1v^*\|_{H} = \|v^*\|_{V^*} \label{toolplus}
 \end{align}
 for all $v \in H$ and all $v^{*} \in V^{*}$, 
 and 
 \begin{align}\label{tool4}
 \|(-\Delta)^{1/2}_{\lambda} v\|_{H} \leq \|v\|_{V} 
 \end{align}
 for all $v \in V$, 
 where 
 $-\Delta : W \subset H \to H$ is the Neumann Laplacian. 
 \end{lem}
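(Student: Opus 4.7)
The plan is to view $A=-\Delta$ as the non-negative self-adjoint Neumann Laplacian on $H$ with domain $W$, and then handle the five identities in three groups: the compatibility \eqref{tool1} between the two resolvents, the $H$-level estimates \eqref{tool2}, \eqref{tool3}, \eqref{tool4} via the spectral (Borel functional) calculus of $A$, and the $V^*$-level isometry \eqref{toolplus} via a duality and density argument based on the first group.

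First I would prove \eqref{tool1}. Given $u\in H$, set $w:=J_\lambda u\in W$, so that $w-\lambda\Delta w=u$ in $H$. For every $v\in V$, integration by parts (the boundary term vanishes because $\partial_\nu w=0$) and the definitions of $F$ and $\tilde{A}=F-I$ give
\begin{align*}
\langle (I+\lambda\tilde{A})w,v\rangle_{V^*,V}
 = (w,v)_H + \lambda(\nabla w,\nabla v)_H
 = (w-\lambda\Delta w,v)_H
 = (u,v)_H,
\end{align*}
so $(I+\lambda\tilde{A})w=u$ in $V^*$, i.e., $\tilde{J}_\lambda u=w=J_\lambda u$. In particular $J_1=F^{-1}|_H$, a fact that will be used crucially for \eqref{toolplus}.

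Next, using the spectral resolution $\{E_\mu\}_{\mu\ge 0}$ of $A$, the fractional operators $J^{1/2}_\lambda$ and $(-\Delta)^{1/2}_\lambda$ are defined by the Borel functional calculus, and the three $H$-level estimates reduce to elementary pointwise inequalities on the spectral symbols:
\begin{align*}
\|J^{1/2}_\lambda v\|_H^2
 &= \int (1+\lambda\mu)^{-1}\, d\|E_\mu v\|_H^2 \le \|v\|_H^2, \\
\|J^{1/2}_1 v\|_V^2
 &= \|(I+A)^{1/2} J^{1/2}_1 v\|_H^2
 = \int (1+\mu)(1+\mu)^{-1}\, d\|E_\mu v\|_H^2
 = \|v\|_H^2, \\
\|(-\Delta)^{1/2}_\lambda v\|_H^2
 &= \int \frac{\mu}{1+\lambda\mu}\, d\|E_\mu v\|_H^2
 \le \int \mu\, d\|E_\mu v\|_H^2
 \le \|v\|_V^2,
\end{align*}
which yield \eqref{tool2}, \eqref{tool3} and \eqref{tool4} respectively. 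Here I use that $V=D((I+A)^{1/2})$ with $\|w\|_V^2=\|(I+A)^{1/2}w\|_H^2$, which itself follows from the definition $(v_1,v_2)_V=(\nabla v_1,\nabla v_2)_H+(v_1,v_2)_H$ together with integration by parts on $W$ and a density argument.

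Finally, \eqref{toolplus} is the subtlest point, because $\tilde{J}^{1/2}_1$ a priori acts on $V^*$, not only on $H$. I would first verify the identity for $v^*\in H\subset V^*$: combining self-adjointness of $J^{1/2}_1$ on $H$ with \eqref{tool1} and the definition \eqref{innerVstar} gives
\begin{align*}
\|J^{1/2}_1 v^*\|_H^2
 = (J_1 v^*, v^*)_H
 = \langle v^*, F^{-1} v^*\rangle_{V^*,V}
 = (v^*, v^*)_{V^*}
 = \|v^*\|_{V^*}^2.
\end{align*}
Hence $J^{1/2}_1\colon (H,\|\cdot\|_{V^*})\to (H,\|\cdot\|_H)$ is isometric, and since $H$ is dense in $V^*$, it extends uniquely to an isometry $\tilde{J}^{1/2}_1\colon V^*\to H$, giving \eqref{toolplus}. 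I expect the main technical obstacle to be the consistency between this density-based extension of $\tilde{J}^{1/2}_1$ and the functional-calculus square root of $\tilde{J}_1=F^{-1}$; this is handled by observing that the extension automatically satisfies $\tilde{J}^{1/2}_1\circ\tilde{J}^{1/2}_1=\tilde{J}_1$ on the dense set $H$, and hence, by continuity, as operators from $V^*$ to $V$.
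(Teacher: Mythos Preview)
Your argument is correct and, for \eqref{tool1}--\eqref{tool4}, is essentially what the paper has in mind (the paper merely cites earlier work for these). For \eqref{toolplus} you take a genuinely different route. The paper works directly on $V^*$: it invokes the duality relation $(\tilde{J}^{1/2}_1 v^*,v)_H=\langle v^*,J^{1/2}_1 v\rangle_{V^*,V}$ (quoted from Okazawa--Suzuki--Yokota), then computes
\[
\|\tilde{J}^{1/2}_1 v^*\|_H^2
=(\tilde{J}^{1/2}_1 v^*,\tilde{J}^{1/2}_1 v^*)_H
=\langle v^*, J^{1/2}_1\tilde{J}^{1/2}_1 v^*\rangle_{V^*,V}
=\langle v^*,F^{-1}v^*\rangle_{V^*,V}
=\|v^*\|_{V^*}^2
\]
in one line. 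Your approach instead proves the identity on $H$ and then extends the isometry by density; this is more self-contained (no external lemma needed) and has the pleasant by-product of \emph{constructing} $\tilde{J}^{1/2}_1$ as a map $V^*\to H$ rather than presupposing it.

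There is one small point you should tighten. Showing that your density extension $S$ satisfies $S\circ S=\tilde{J}_1$ is not quite enough to conclude $S=\tilde{J}^{1/2}_1$, since square roots are only unique among \emph{positive self-adjoint} operators. You should add the (easy) verification that $S$ is symmetric and positive with respect to the $V^*$ inner product: for $u,v\in H$ one has $(a,b)_{V^*}=(a,J_1 b)_H$, whence $(Su,v)_{V^*}=(J^{1/2}_1 u,J_1 v)_H=(u,J_1 J^{1/2}_1 v)_H=(u,Sv)_{V^*}$ and $(Su,u)_{V^*}=\|J^{3/4}_1 u\|_H^2\ge 0$; density then gives the same on all of $V^*$. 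With that, uniqueness of the positive square root yields $S=\tilde{J}^{1/2}_1$ and your argument is complete.
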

\begin{proof}
We can show \eqref{tool1} by the same argument as in \cite[Lemma 3.3]{KY2}. 
Also, we can verify \eqref{tool2}, \eqref{tool3} and \eqref{tool4} 
by the same argument as in \cite[Lemma 3.2]{K(nonlocal)}. 
Now we confirm \eqref{toolplus}. 
Noting that 
$\tilde{J}_{1}=(I+\tilde{A})^{-1}=F^{-1}$ and 
$(\tilde{J}^{1/2}_1 v^{*}, v)_{H} = \langle v^{*}, J^{1/2}_1 v \rangle_{V^{*}, V}$ 
for all $v^{*} \in V^{*}$ and all $v \in H$ (see e.g., \cite[Lemma 3.3]{OSY}), 
we see from \eqref{innerVstar} and \eqref{tool1} that 
$$
\|\tilde{J}^{1/2}_{1}v^*\|^2_{H} 
= (\tilde{J}^{1/2}_{1}v^*, \tilde{J}^{1/2}_{1}v^*)_{H} 
= \langle v^*, J_{1}^{1/2}\tilde{J}_1^{1/2}v^{*} \rangle_{V^*, V}
= \langle v^*, F^{-1}v^{*} \rangle_{V^*, V}     
= \|v^*\|^2_{V^*}                                  
$$
for all $v^* \in V^*$, that is, we can obtain \eqref{toolplus}.
\end{proof} 
%
\begin{lem}[{\cite[Lemma 3.3]{K(nonlocal)}}]\label{keylemma}
Let $E \subset \Omega$ be 
a bounded domain with smooth boundary 
and 
let $\{v_{\lambda}\}_{\lambda} \subset H^1(0, T; H) \cap L^{\infty}(0, T; H)$ 
satisfy that 
$\{v_{\lambda}\}_{\lambda}$ is bounded in $L^{\infty}(0, T; H)$ and 
$\{\partial_{t}v_{\lambda}\}_{\lambda}$ is bounded in $L^2(0, T; H)$. 
Then 
\begin{align*}
&v_{\lambda} \to v\ \mbox{weakly$^{*}$ in}\ L^{\infty}(0, T; H), \\
&J^{1/2}_{1}v_{\lambda} \to J^{1/2}_{1} v\ 
\mbox{in}\ C([0, T]; L^2(E))
\end{align*}
as $\lambda = \lambda_j \searrow0$ 
with some function $v \in L^{\infty}(0, T; H)$. 
\end{lem}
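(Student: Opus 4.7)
The plan is to combine a Banach--Alaoglu extraction on $L^{\infty}(0,T;H)$ with the Aubin--Lions--Simon type result of Lemma \ref{Ascoli}(ii), applied to the smoothed sequence $J_1^{1/2} v_\lambda$ rather than $v_\lambda$ itself, and then to match the two resulting limits using self-adjointness of $J_1^{1/2}$. The detour through $J_1^{1/2}$ is forced on us because on an unbounded $\Omega$ the embedding $V\hookrightarrow H$ has no compactness to offer; compactness can only be harvested after restriction to the bounded set $E$.

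First, since $\{v_\lambda\}_\lambda$ is bounded in $L^{\infty}(0,T;H)=(L^{1}(0,T;H))^{*}$, Banach--Alaoglu yields a subsequence $\lambda_j\searrow 0$ and $v\in L^{\infty}(0,T;H)$ with $v_{\lambda_j}\to v$ weakly$^{*}$ in $L^{\infty}(0,T;H)$. This already gives the first assertion of the lemma.

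Next I would transfer the bounds through $J_1^{1/2}$. By \eqref{tool3} the operator $J_1^{1/2}: H\to V$ is an isometry, and it commutes with $\partial_t$, so the hypotheses produce
\[
\|J_1^{1/2} v_\lambda\|_{L^{\infty}(0,T;V)} + \|\partial_t J_1^{1/2} v_\lambda\|_{L^{2}(0,T;V)} \le C.
\]
Restriction to $E$ (which is continuous $V=H^{1}(\Omega)\to H^{1}(E)$) gives uniform bounds in $L^{\infty}(0,T;H^{1}(E))$ and in $L^{2}(0,T;L^{2}(E))$, respectively. Since the embedding $H^{1}(E)\hookrightarrow L^{2}(E)$ is compact by Rellich--Kondrachov, Lemma \ref{Ascoli}(ii) with $X=H^{1}(E)$, $Z=Y=L^{2}(E)$, $r=2$ yields that $\{J_1^{1/2} v_\lambda|_E\}_\lambda$ is relatively compact in $C([0,T];L^{2}(E))$. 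After passing to a further subsequence (not relabeled), $J_1^{1/2} v_{\lambda_j}|_E\to w$ in $C([0,T];L^{2}(E))$ for some $w$.

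The last step is to identify $w=J_1^{1/2} v|_E$. Since $J_1=(I-\Delta)^{-1}$ with the Neumann boundary condition is self-adjoint and positive on $H$, so is $J_1^{1/2}$. For any $\phi\in L^{1}(0,T;L^{2}(E))$ extended by zero to $\Omega$, the function $J_1^{1/2}\phi$ lies in $L^{1}(0,T;H)$, so the weak$^{*}$ convergence gives
\[
\int_0^T (J_1^{1/2} v_{\lambda_j},\phi)_H\,dt
= \int_0^T (v_{\lambda_j}, J_1^{1/2}\phi)_H\,dt
\to \int_0^T (J_1^{1/2} v,\phi)_H\,dt,
\]
which pins down $w=J_1^{1/2} v|_E$ by uniqueness of the weak limit and delivers the second claim. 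The main point that has to be handled carefully is the commutativity of $J_1^{1/2}$ with $\partial_t$ and its self-adjointness on $H$; both follow from the spectral calculus for the self-adjoint Neumann Laplacian and are otherwise standard.
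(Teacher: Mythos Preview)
Your argument is correct and matches the paper's approach. Although the paper does not reprove Lemma~\ref{keylemma} (it is quoted from \cite{K(nonlocal)}), the paper does prove the closely analogous Lemma~\ref{keylemmaVstar}, and your proof follows that template step for step: weak$^{*}$ extraction in $L^{\infty}(0,T;H)$, the isometry \eqref{tool3} to push $J_1^{1/2}v_\lambda$ into $L^{\infty}(0,T;V)$, restriction to $E$ and Lemma~\ref{Ascoli}(ii) for compactness in $C([0,T];L^2(E))$, and identification of the limit via self-adjointness of $J_1^{1/2}$ against test functions supported in $E$.
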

%
%
\begin{lem}[{\cite[Lemma 3.4]{K(nonlocal)}}]\label{keylemma2}
Let $E \subset \Omega$ be 
a bounded domain with smooth boundary 
and 
let $\{v_{\lambda}\}_{\lambda} \subset H^1(0, T; H)$ 
satisfy that 
$\{v_{\lambda}\}_{\lambda}$ is bounded in $L^2(0, T; H)$ and 
$\{\partial_{t}v_{\lambda}\}_{\lambda}$, 
$\{(-\Delta)_{\lambda}v_{\lambda}\}_{\lambda}$ 
are bounded in $L^2(0, T; H)$. 
Then 
\begin{align*}
&v_{\lambda} \to v\ 
\mbox{in}\ L^2(0, T; L^2(E))
\end{align*}
as $\lambda = \lambda_j \searrow0$ 
with some function $v \in L^2(0, T; W)$.
\end{lem}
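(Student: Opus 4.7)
The plan is to exploit the resolvent decomposition
\[
v_\lambda = J_\lambda v_\lambda + \lambda (-\Delta)_\lambda v_\lambda,
\]
which follows at once from the identity $(-\Delta)_\lambda = \lambda^{-1}(I - J_\lambda)$ recalled in Lemma \ref{ra}. Since $\{(-\Delta)_\lambda v_\lambda\}_\lambda$ is bounded in $L^2(0,T;H)$ by hypothesis, the second term is of order $\lambda$ in $L^2(0,T;H)$ and therefore vanishes in $L^2(0,T;L^2(E))$ as $\lambda \searrow 0$. It thus suffices to establish strong compactness of $\{J_\lambda v_\lambda\}_\lambda$ in $L^2(0,T;L^2(E))$; the two families will then share the same strong limit on $E$.

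To this end I plan to invoke the Aubin--Lions result (Lemma \ref{Ascoli}(i)) with $X = H^1(E)$ and $Z = Y = L^2(E)$, noting that $H^1(E) \hookrightarrow L^2(E)$ is compact by Rellich--Kondrachov since $E$ is bounded with smooth boundary. For the spatial bound, the identity $-\Delta J_\lambda v_\lambda = (-\Delta)_\lambda v_\lambda$ together with the $H$-contractivity of $J_\lambda$ shows that both $\{J_\lambda v_\lambda\}_\lambda$ and $\{\Delta J_\lambda v_\lambda\}_\lambda$ are bounded in $L^2(0,T;H)$; the classical $H^2$-regularity estimate $\|w\|_{H^2(\Omega)} \leq C(\|w\|_H + \|\Delta w\|_H)$ for $w \in W$ (which holds because $\partial\Omega$ is smooth and bounded, even for unbounded $\Omega$) then yields a uniform bound in $L^2(0,T;W) \hookrightarrow L^2(0,T;H^1(E))$. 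For the temporal bound, $J_\lambda$ commutes with $\partial_t$ as a bounded linear operator on $H$, so $\partial_t(J_\lambda v_\lambda) = J_\lambda(\partial_t v_\lambda)$ is bounded in $L^2(0,T;H) \hookrightarrow L^2(0,T;L^2(E))$.

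Applying Aubin--Lions yields a subsequence $\lambda = \lambda_j \searrow 0$ along which $J_{\lambda_j} v_{\lambda_j} \to v$ strongly in $L^2(0,T;L^2(E))$ for some $v$; combined with the first paragraph this forces $v_{\lambda_j} \to v$ in $L^2(0,T;L^2(E))$. For the regularity claim, $\{J_\lambda v_\lambda\}_\lambda$ is bounded in the reflexive space $L^2(0,T;W)$ by the $H^2$-estimate above, so a further extraction gives weak convergence in $L^2(0,T;W)$ to some $\tilde v$; uniqueness of limits identifies $\tilde v$ on $E$ with the strong limit $v$, and consequently $v$ may be taken in $L^2(0,T;W)$.

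The main obstacle I anticipate is securing the uniform $H^2(\Omega)$-bound for $J_\lambda v_\lambda$ when $\Omega$ is unbounded; the required elliptic estimate for the Neumann Laplacian on a domain with smooth bounded boundary is standard but must be cited or derived with care. A safe fallback is to work locally, applying interior/boundary elliptic regularity on an enlarged bounded smooth subdomain $E' \supset E$ in $\Omega$ rather than on the whole of $\Omega$, which is still enough to drive the compactness step on $E$.
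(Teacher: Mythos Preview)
Your proposal is correct and follows essentially the same route as the paper. Although the paper does not reproduce the proof here (citing instead \cite[Lemma~3.4]{K(nonlocal)}), the outline in Section~\ref{Subsec1.6} sketches precisely your strategy: the resolvent decomposition $v_\lambda = \lambda(-\Delta)_\lambda v_\lambda + J_\lambda v_\lambda$, followed by the Aubin--Lions lemma applied to $\{J_\lambda v_\lambda\}_\lambda$ via the compact embedding $H^1(D)\hookrightarrow L^2(D)$ on a bounded subdomain.
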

%
%
\begin{lem}\label{keylemmaVstar}
Let $E \subset \Omega$ be 
a bounded domain with smooth boundary 
and 
let $\{v_{\lambda}\}_{\lambda} \subset H^1(0, T; V^*) \cap L^{\infty}(0, T; H)$ 
satisfy that 
$\{v_{\lambda}\}_{\lambda}$ is bounded in $L^{\infty}(0, T; H)$ and 
$\{(v_{\lambda})_{t}\}_{\lambda}$ is bounded in $L^2(0, T; V^*)$. 
Then 
\begin{align*}
&v_{\lambda} \to v\ \mbox{weakly$^{*}$ in}\ L^{\infty}(0, T; H), \\
&\tilde{J}^{1/2}_{1}v_{\lambda} \to \tilde{J}^{1/2}_{1} v\ 
\mbox{in}\ C([0, T]; L^2(E))
\end{align*}
as $\lambda = \lambda_j \searrow0$ 
with some function $v \in L^{\infty}(0, T; H)$. 
\end{lem}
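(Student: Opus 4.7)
The plan is to mimic the argument of Lemma \ref{keylemma} (namely \cite[Lemma 3.3]{K(nonlocal)}), replacing the operator $J^{1/2}_{1}$ by $\tilde{J}^{1/2}_{1}$ and invoking the isometry \eqref{toolplus} in place of \eqref{tool3}. Since $\{v_{\lambda}\}_{\lambda}$ is bounded in $L^{\infty}(0, T; H)$, the Banach--Alaoglu theorem yields, along a subsequence still indexed by $\lambda = \lambda_{j} \searrow 0$, some $v \in L^{\infty}(0, T; H)$ with $v_{\lambda} \to v$ weakly$^{*}$ in $L^{\infty}(0, T; H)$. This already establishes the first asserted convergence.

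For the strong convergence of $\tilde{J}^{1/2}_{1}v_{\lambda}$ in $C([0, T]; L^{2}(E))$, the plan is to apply Lemma \ref{Ascoli}(ii). The preliminary observation is that $\tilde{J}^{1/2}_{1}|_{H} = J^{1/2}_{1}$: from the pairing identity
\[
(\tilde{J}^{1/2}_{1}w^{*}, w)_{H} = \langle w^{*}, J^{1/2}_{1} w\rangle_{V^{*}, V} \qquad (w^{*} \in V^{*},\ w \in H)
\]
used in the proof of \eqref{toolplus}, specializing $w^{*} \in H$ and invoking the self-adjointness of $J^{1/2}_{1}$ on $H$ yields $\tilde{J}^{1/2}_{1}w^{*} = J^{1/2}_{1}w^{*}$ for every $w^{*} \in H$. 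Combined with \eqref{tool3} and \eqref{toolplus}, this gives, for a.e.\ $t\in(0,T)$,
\[
\|\tilde{J}^{1/2}_{1} v_{\lambda}(t)\|_{V} = \|v_{\lambda}(t)\|_{H}, \qquad \|\tilde{J}^{1/2}_{1}(v_{\lambda})_{t}(t)\|_{H} = \|(v_{\lambda})_{t}(t)\|_{V^{*}}.
\]
Since $\tilde{J}^{1/2}_{1}$ is linear and time-independent, $\partial_{t}(\tilde{J}^{1/2}_{1}v_{\lambda}) = \tilde{J}^{1/2}_{1}(v_{\lambda})_{t}$ holds in the distributional sense, so $\{\tilde{J}^{1/2}_{1}v_{\lambda}\}$ is bounded in $L^{\infty}(0, T; V)$ and $\{\partial_{t}(\tilde{J}^{1/2}_{1}v_{\lambda})\}$ is bounded in $L^{2}(0, T; H)$.

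Restricting spatially to $E$ and using the compact embedding $H^{1}(E) \hookrightarrow L^{2}(E)$, Lemma \ref{Ascoli}(ii) then provides a further subsequence convergent in $C([0, T]; L^{2}(E))$. The linear continuity of $\tilde{J}^{1/2}_{1} : V^{*} \to H$ transfers the weak$^{*}$ convergence $v_{\lambda} \to v$ to $\tilde{J}^{1/2}_{1}v_{\lambda} \to \tilde{J}^{1/2}_{1}v$ weakly$^{*}$ in $L^{\infty}(0, T; H)$, and uniqueness of limits identifies the strong limit on $E$ as $\tilde{J}^{1/2}_{1}v$. The main (mild) obstacle in the argument is the preliminary identification $\tilde{J}^{1/2}_{1}|_{H} = J^{1/2}_{1}$, which is what enables one to upgrade the $L^{\infty}(0,T;H)$-bound on $v_{\lambda}$ to an $L^{\infty}(0,T;V)$-bound on $\tilde{J}^{1/2}_{1}v_{\lambda}$; once this is in hand, the remaining steps mirror those of Lemma \ref{keylemma}.
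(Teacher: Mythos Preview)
Your proof is correct and follows essentially the same route as the paper's: bound $\tilde{J}^{1/2}_{1}v_{\lambda}$ in $L^{\infty}(0,T;V)$ via \eqref{tool3} and its time derivative in $L^{2}(0,T;H)$ via \eqref{toolplus}, then apply Lemma~\ref{Ascoli}(ii) on $E$ and identify the limit. The paper carries out the limit identification through an explicit test-function duality computation, which is equivalent to your weak$^{*}$ transfer step --- just note that what makes that transfer work is the self-adjointness of $J^{1/2}_{1}$ on $H$ (so that the predual $L^{1}(0,T;H)$ is mapped into itself), not merely the boundedness of $\tilde{J}^{1/2}_{1}:V^{*}\to H$.
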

\begin{proof}
There exists $v \in  L^{\infty}(0, T; H)$ such that 
\begin{align*}
&v_{\lambda} \to v\ 
\mbox{weakly$^{*}$ in}\ L^{\infty}(0, T; H)
\end{align*}
as $\lambda = \lambda_j \searrow0$. 
We see that 
\begin{equation}\label{embedding}
H^1(E) \subset L^2(E) 
\subset L^2(E)\ 
\mbox{with compact embedding}\ 
H^1(E) \hookrightarrow L^2(E).  
\end{equation}
It follows from \eqref{tool1} and \eqref{tool3} that 
$$
\|\tilde{J}^{1/2}_1 v_{\lambda}(t)\|_{H^1(E)} 
\leq \|\tilde{J}^{1/2}_1v_{\lambda}(t)\|_{V} 
= \|J^{1/2}_1v_{\lambda}(t)\|_{V} 
= \|v_{\lambda}(t)\|_{H}.  
$$
Thus  
there exists a constant $C_1>0$ such that 
\begin{equation}\label{Linfty}
\|\tilde{J}^{1/2}_1 v_{\lambda} \|_{L^{\infty}(0, T; H^1(E))} \leq C_1. 
\end{equation}
Also, from \eqref{toolplus} we have that 
$$
\|\tilde{J}^{1/2}_1(v_{\lambda})_{t}(t)\|_{L^2(E)}
\leq \|\tilde{J}^{1/2}_1(v_{\lambda})_{t}(t)\|_{H}
= \|(v_{\lambda})_{t}(t)\|_{V^{*}},
$$
and hence there exists a constant $C_2>0$ such that 
\begin{equation}\label{L2}
\|\tilde{J}^{1/2}_1(v_{\lambda})_{t}\|_{L^2(0, T; L^2(E))} 
\leq C_2.
\end{equation}
Therefore 
applying \eqref{embedding}-\eqref{L2} and Lemma \ref{Ascoli} 
yields that 
\begin{equation}\label{AfterAscoli}
\tilde{J}^{1/2}_1 v_{\lambda} \to w 
\quad \mbox{in}\ C([0, T]; L^2(E))
\end{equation}
as $\lambda = \lambda_j \searrow0$ 
with some function $w \in C([0, T]; L^2(E))$. 
Now we let 
$\psi \in C_{\mathrm c}^{\infty}([0, T] \times \overline{E})$ 
and will show that 
\begin{equation}\label{henbunhou}
\int_0^T \Bigl(\int_{E} \bigl(\tilde{J}^{1/2}_1v(t) -w(t) \bigr)\psi(t) 
                                                                                                 \Bigr)\,dt 
= 0.
\end{equation} 
We see from \eqref{tool1} that 
\begin{align}\label{LL}
\int_0^T \Bigl(\int_{E} 
                                 \bigl(\tilde{J}^{1/2}_1v_{\lambda}(t) \bigr)\psi(t) \Bigr)\,dt
= \int_0^T (J^{1/2}_1v_{\lambda}(t), \psi(t))_{H}\,dt
= \int_0^T (v_{\lambda}(t), J^{1/2}_1\psi(t))_{H}\,dt. 
\end{align}
Here,   
since 
$\psi \in C_{\mathrm c}^{\infty}([0, T]\times \overline{E}) 
\subset C_{\mathrm c}^{\infty}([0, T]\times \overline{\Omega}) 
\subset L^1(0, T; H)$,   
we infer from \eqref{tool2} that 
$$
J^{1/2}_1\psi \in L^1(0, T; H).
$$
Hence it follows from \eqref{tool1} that 
\begin{align}\label{kyokugen}
\int_0^T (v_{\lambda}(t), J^{1/2}_1\psi(t))_{H}\,dt 
\to \int_0^T (v(t), J^{1/2}_1\psi(t))_{H}\,dt
&= \int_0^T (J^{1/2}_1v(t), \psi(t))_{H}\,dt \\ \notag
&= \int_0^T (\tilde{J}^{1/2}_1v(t), \psi(t))_{H}\,dt
\end{align}
as $\lambda = \lambda_j \searrow0$. 
Thus combination of \eqref{AfterAscoli}, \eqref{LL} and \eqref{kyokugen} 
leads to \eqref{henbunhou}. 
Thus we can obtain 
\begin{equation}\label{Afterhenbunhou}
w = \tilde{J}^{1/2}_1v \quad \mbox{a.e.\ in}\ (0, T) \times E.
\end{equation} 
Therefore we 
derive from \eqref{AfterAscoli} and \eqref{Afterhenbunhou} that 
\begin{align*}
\tilde{J}^{1/2}_1v_{\lambda} \to \tilde{J}^{1/2}_1 v 
\quad \mbox{in}\ C([0, T]; L^2(E))
\end{align*}
as $\lambda = \lambda_j \searrow0$. 
\end{proof}


\section{Proof of Theorem \ref{maintheorem1}}\label{Sec3}

To show existence of solutions to \ref{Pbeta} we consider the approximation 
\begin{equation}\label{Pbeplam}\tag*{(P)$_{\beta, \ep, \lambda}$}
  \begin{cases} 
    \alpha\partial_{t} \mu_{\beta, \ep, \lambda} 
         + \partial_{t} \varphi_{\beta, \ep, \lambda}  
         +(-\Delta)_{\lambda}\mu_{\beta, \ep, \lambda} 
         = p(\sigma_{\beta, \ep, \lambda} 
              - \mu_{\beta, \ep, \lambda})  
         & \mbox{in}\ \Omega\times(0, T),
 \\[0mm]
         \mu_{\beta, \ep, \lambda} 
         = \beta\partial_{t}\varphi_{\beta, \ep, \lambda} 
            + ((-\Delta)_{\lambda}+1)\varphi_{\beta, \ep, \lambda} 
            + G_{\ep}'(\varphi_{\beta, \ep, \lambda}) 
         & \mbox{in}\ \Omega\times(0, T),
 \\[1mm]
         \partial_{t} \sigma_{\beta, \ep, \lambda}  
         +(-\Delta)_{\lambda}\sigma_{\beta, \ep, \lambda} 
         = -p(\sigma_{\beta, \ep, \lambda} 
                - \mu_{\beta, \ep, \lambda})   
         & \mbox{in}\ \Omega\times(0, T), 
 \\[0mm]
        \mu_{\beta, \ep, \lambda}(0) = \mu_{0\ep}, 
        \varphi_{\beta, \ep, \lambda}(0) = \varphi_{0}, 
        \sigma_{\beta, \ep, \lambda}(0) = \sigma_{0\ep}                                          
         & \mbox{in}\ \Omega,
  \end{cases}
\end{equation}
where $\ep>0$, $\lambda > 0$, 
$(-\Delta)_{\lambda}$ is the Yosida approximation 
of the Neumann Laplacian $-\Delta$,  
$G_{\ep} = \widehat{B_{\ep}} + \widehat{\pi}$,    
$\widehat{B_{\ep}} : \mathbb{R} \to \mathbb{R}$ is 
the Moreau--Yosida regularization  
of $\widehat{B}$,  
$\mu_{0\ep}:=(1-\ep\Delta)^{-1/2}\mu_{0}$ and 
$\sigma_{0\ep}:=(1-\ep\Delta)^{-1/2}\sigma_{0}$. 
\begin{remark}\label{MYreg} 
The function $\widehat{B_{\ep}} : \mathbb{R} \to \mathbb{R}$ defined by 
$$
\widehat{B_{\ep}}(r):=
\displaystyle\inf_{s\in\mathbb{R}}\left\{\frac{1}{2\ep}|r-s|^2+\widehat{B}(s) \right\} 
\quad \mbox{for}\ r\in \mathbb{R} 
$$ 
is called the Moreau--Yosida regularization  
of $\widehat{B}$.  
It holds that 
$$
\widehat{B_{\ep}}(r)=\frac{1}{2\ep}|r-J_{\ep}^{B}(r)|^2 
+ \widehat{B}(J_{\ep}^{B}(r))
$$ 
for all $r\in\mathbb{R}$, 
where $J_{\ep}^{B}$ is the resolvent operator of $B$  
on $\mathbb{R}$.  
The derivative of $\widehat{B_{\ep}}$ is $B_{\ep}$, 
where $B_{\ep}$ is the Yosida approximation operator of $B$ 
on $\mathbb{R}$, 
and hence  
the identity 
$$
G_{\ep}'(r) = B_{\ep}(r) + \pi(r) 
$$
holds. 
Moreover, 
the inequalities 
$$
0\leq \widehat{B_{\ep}}(r) \leq \widehat{B}(r)
$$ 
hold for all $r\in\mathbb{R}$ (see e.g., \cite[Theorem 2.9, p.\ 48]{Barbu2}). 
\end{remark}
We can prove existence for \ref{Pbeplam}. 
\begin{lem}\label{solPbeplam}
There exists a unique solution $(\mu_{\beta, \ep, \lambda}, 
           \varphi_{\beta, \ep, \lambda}, \sigma_{\beta, \ep, \lambda})$ 
of {\rm \ref{Pbeplam}} such that 
$\mu_{\beta, \ep, \lambda}, 
\varphi_{\beta, \ep, \lambda}, 
\sigma_{\beta, \ep, \lambda} \in C^{1}([0, T]; H)$. 
\end{lem}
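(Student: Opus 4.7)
The plan is to rewrite \ref{Pbeplam} as a single Cauchy problem for a system of ordinary differential equations in the Hilbert space $H\times H\times H$ and then invoke the Cauchy--Lipschitz--Picard theorem. This reduction is possible because, after regularization, every operator entering the system is Lipschitz continuous from $H$ to $H$: the Yosida operator $(-\Delta)_{\lambda}$ is bounded linear with $\|(-\Delta)_{\lambda}\|_{\mathcal L(H)}\le 1/\lambda$, while $G_{\ep}'=B_{\ep}+\pi$ is induced by a globally Lipschitz scalar map (constant $\le 1/\ep+\|\pi'\|_{L^{\infty}(\mathbb R)}$ by (C2) and the standard properties of the Moreau--Yosida regularization recalled in Remark \ref{MYreg}), and $p$ is a nonnegative constant.

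The first step is to exploit $\beta>0$ and solve the second equation of \ref{Pbeplam} explicitly for $\partial_{t}\varphi_{\beta,\ep,\lambda}$:
\begin{equation*}
\partial_{t}\varphi_{\beta,\ep,\lambda}
=\frac{1}{\beta}\Bigl[\mu_{\beta,\ep,\lambda}
-((-\Delta)_{\lambda}+1)\varphi_{\beta,\ep,\lambda}
-G_{\ep}'(\varphi_{\beta,\ep,\lambda})\Bigr].
\end{equation*}
Plugging this into the first equation yields an explicit expression for $\partial_{t}\mu_{\beta,\ep,\lambda}$ as a function of $(\mu_{\beta,\ep,\lambda},\varphi_{\beta,\ep,\lambda},\sigma_{\beta,\ep,\lambda})$, while the third equation already gives $\partial_{t}\sigma_{\beta,\ep,\lambda}$ in the same form. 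Therefore \ref{Pbeplam} is equivalent to
\begin{equation*}
\frac{d}{dt}(\mu,\varphi,\sigma)=\mathcal F(\mu,\varphi,\sigma),
\qquad
(\mu,\varphi,\sigma)(0)=(\mu_{0\ep},\varphi_{0},\sigma_{0\ep}),
\end{equation*}
posed in $H\times H\times H$, with initial data lying in $H\times H\times H$ because $\varphi_{0}\in V\subset H$ and $\mu_{0\ep},\sigma_{0\ep}\in H$.

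The second step is to verify that $\mathcal F:H\times H\times H\to H\times H\times H$ is globally Lipschitz continuous. Each component is a finite sum and composition of the bounded linear operators $(-\Delta)_{\lambda}$ and $I$, the constant $p$, and the Nemytskii operators associated with $B_{\ep}$ and $\pi$; since these scalar maps are globally Lipschitz on $\mathbb R$, the induced Nemytskii operators are globally Lipschitz from $H$ to $H$. Consequently $\mathcal F$ is globally Lipschitz on $H^{3}$, and the Cauchy--Lipschitz--Picard theorem produces a unique solution $(\mu_{\beta,\ep,\lambda},\varphi_{\beta,\ep,\lambda},\sigma_{\beta,\ep,\lambda})\in C^{1}([0,T];H)^{3}$ of \ref{Pbeplam} on the whole prescribed interval.

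I do not expect a genuine obstacle in this lemma: the entire purpose of the triple regularization by $\beta$, $\ep$, and $\lambda$ is exactly to render the system a well-posed ODE in $H$. The only piece of routine bookkeeping is the Lipschitz estimate for $v\mapsto G_{\ep}'(v)$ from $H$ to $H$, which follows immediately from the pointwise bound $|G_{\ep}'(r)-G_{\ep}'(s)|\le(1/\ep+\|\pi'\|_{L^{\infty}(\mathbb R)})|r-s|$. The resulting Lipschitz constant of $\mathcal F$ will blow up as $\beta,\ep,\lambda\searrow 0$, but this is harmless here; the $\beta$-, $\ep$- and $\lambda$-independent estimates needed to pass to the limit will be derived afterwards by energy arguments, not from the Picard theorem itself.
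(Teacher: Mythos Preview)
Your proposal is correct and follows essentially the same approach as the paper: rewrite \ref{Pbeplam} as a first-order ODE $\frac{dU}{dt}=L(U)$ in $H\times H\times H$ by solving the second equation for $\partial_{t}\varphi$ and substituting into the first, verify that $L$ is globally Lipschitz via the bounds $\|(-\Delta)_{\lambda}\|_{\mathcal L(H)}\le 1/\lambda$ and $|G_{\ep}'(r)-G_{\ep}'(s)|\le(1/\ep+\|\pi'\|_{L^{\infty}(\mathbb R)})|r-s|$, and conclude by the Cauchy--Lipschitz--Picard theorem. The paper writes out the resulting operator $L$ explicitly but the argument is identical to yours.
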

\begin{proof}
It is possible to rewrite \ref{Pbeplam} as 
\begin{equation}\label{Q}\tag{Q}
  \begin{cases} 
    \frac{dU}{dt} = L(U)
         & \mbox{on}\ [0, T], 
 \\[1mm]
        U(0)=U_{0}, 
  \end{cases}
\end{equation}
where 
$$
U=\left(
    \begin{array}{c}
      \mu_{\beta, \ep, \lambda} \\
      \varphi_{\beta, \ep, \lambda} \\
      \sigma_{\beta, \ep, \lambda}
    \end{array}
  \right), \
U_{0}=\left(
    \begin{array}{c}
      \mu_{0\ep} \\
      \varphi_{0} \\
      \sigma_{0\ep}
    \end{array}
  \right) 
\in H\times H\times H, 
$$ 
and 
the operator $L : H\times H \times H \to H\times H \times H$ 
is defined as   
\begin{align*}
&L: 
\left(
    \begin{array}{c}
      \mu \\
      \varphi \\
      \sigma
    \end{array}
  \right) 
\mapsto 
\left(
    \begin{array}{c}
      -\frac{1}{\alpha}(-\Delta)_{\lambda}\mu + \frac{p}{\alpha}(\sigma - \mu) 
      - \frac{1}{\alpha\beta}\mu + \frac{1}{\alpha\beta}(-\Delta)_{\lambda}\varphi 
      + \frac{1}{\alpha\beta}\varphi + \frac{1}{\alpha\beta}G_{\ep}'(\varphi) 
      \\[3mm]
      \frac{1}{\beta}\mu - \frac{1}{\beta}(-\Delta)_{\lambda}\varphi 
      - \frac{1}{\beta}\varphi - \frac{1}{\beta}G_{\ep}'(\varphi) 
      \\[3mm] 
      -(-\Delta)_{\lambda}\sigma - p(\sigma - \mu)
    \end{array}
  \right).  
\end{align*} 
Here, noting from Remark \ref{MYreg} that  
\begin{align*}
&\|(-\Delta)_{\lambda}(\phi - \psi)\|_{H} 
\leq \frac{1}{\lambda}\|\phi-\psi\|_{H}, 
\\[1mm]
&\|G_{\ep}'(\phi)-G_{\ep}'(\psi)\|_{H} 
\leq \left(\frac{1}{\ep}+ \|\pi'\|_{L^{\infty}(\mathbb{R})}\right)\|\phi-\psi\|_{H}   
\end{align*} 
for all $\phi \in H$ and all $\psi \in H$,     
we can observe that the operator 
$L : H\times H \times H \to H\times H \times H$ 
is Lipschitz continuous.     
Thus, applying the Cauchy--Lipschitz--Picard theorem,  
we can show that there exists a unique solution   
$U=\left(
    \begin{array}{c}
      \mu_{\beta, \ep, \lambda} \\
      \varphi_{\beta, \ep, \lambda} \\
      \sigma_{\beta, \ep, \lambda}
    \end{array}
  \right) 
\in C^1([0, T]; H\times H\times H)$ 
of \eqref{Q}. 
Therefore we can obtain this lemma. 
\end{proof}
%
\begin{lem}\label{ineqG}
We have 
\begin{align*}
G_{\ep}(r) \geq -\frac{\|\pi'\|_{L^{\infty}(\mathbb{R})}}{2}r^2 
- 2\|\pi'\|_{L^{\infty}(\mathbb{R})}\ep r^2
\end{align*} 
for all $r \in \mathbb{R}$ and all $\ep>0$. 
\end{lem}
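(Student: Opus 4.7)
The strategy is to expand $G_{\ep}$ via the Moreau--Yosida formula, apply (C4) at the resolvent point $s := J_{\ep}^{B}(r)$, and control the mismatch $\widehat\pi(r) - \widehat\pi(s)$ by Young's inequality, absorbing the dominant part of the error into the quadratic term already generated by $\widehat{B_{\ep}}$.

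First I would collect three elementary facts about $s = J_{\ep}^{B}(r)$. Since $\widehat B \ge 0$ and $\widehat B(0) = 0$, we have $0 \in B(0)$, hence $J_{\ep}^{B}(0) = 0$. By nonexpansivity this yields $|s| \le |r|$, and since $J_{\ep}^{B}$ is nondecreasing on $\mathbb{R}$ (a direct consequence of the monotonicity of $B$), $s$ and $r$ share the same sign, so $|u| \le |r|$ for every $u$ between $s$ and $r$.

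Now set $L := \|\pi'\|_{L^{\infty}(\mathbb{R})}$. Using the identity from the Remark, I would write
\[
G_{\ep}(r) = \frac{|r-s|^2}{2\ep} + \widehat B(s) + \widehat\pi(s) + \bigl(\widehat\pi(r) - \widehat\pi(s)\bigr),
\]
and invoke (C4) at the point $s$ to get $\widehat B(s) + \widehat\pi(s) = G(s) \ge -\tfrac{L}{2}s^2 \ge -\tfrac{L}{2}r^2$. Since $\pi(0)=0$ and $\pi$ is $L$-Lipschitz, the pointwise bound $|\pi(u)| \le L|u| \le L|r|$ on the interval between $s$ and $r$ yields $|\widehat\pi(r) - \widehat\pi(s)| \le L|r|\,|r-s|$. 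The crucial step is then Young's inequality with the parameter chosen to match the Moreau--Yosida term,
\[
L|r|\,|r-s| \le \frac{|r-s|^2}{2\ep} + \frac{L^2 \ep}{2}\,r^2,
\]
so that the first summand cancels exactly the $|r-s|^2/(2\ep)$ appearing above. Combining gives $G_{\ep}(r) \ge -\tfrac{L}{2}r^2 - \tfrac{L^2\ep}{2}r^2$, and since (C4) provides $L < 1$, in particular $L^2/2 \le 2L$, which produces the claimed inequality.

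The only delicate point is the estimate of $\widehat\pi(r) - \widehat\pi(s)$: a naive Lipschitz bound $|\widehat\pi(r) - \widehat\pi(s)| \le L|r|\,|r-s| \le L r^2$ would leave an $\ep$-independent defect and miss the correct asymptotics. The remedy, which is the real content of the proof, is that the square $|r-s|^2/(2\ep)$ from the Moreau--Yosida representation is available for absorption; matching the Young parameter to exactly $1/\ep$ makes the absorption exact and produces the $O(\ep)$ remainder in the statement.
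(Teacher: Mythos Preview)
Your argument is correct, and in fact yields the slightly sharper bound $G_{\ep}(r) \ge -\tfrac{L}{2}r^2 - \tfrac{L^2\ep}{2}r^2$ before you weaken it to the stated form. The paper itself does not prove this lemma in-line; it simply defers to \cite[(4.8) in the proof of Lemma 4.2]{K(nonlocal)}, so your write-up supplies exactly the details the paper omits. The route you take---expand $G_{\ep}$ via the Moreau--Yosida identity $\widehat{B_{\ep}}(r) = \tfrac{1}{2\ep}|r-s|^2 + \widehat B(s)$ with $s = J_{\ep}^{B}(r)$, apply (C4) at $s$, and absorb the mismatch $\widehat\pi(r) - \widehat\pi(s)$ into the quadratic $|r-s|^2/(2\ep)$ by Young's inequality with parameter matched to $\ep$---is the natural one and is presumably what the cited reference does as well. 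The one preparatory observation you need, that $s$ lies between $0$ and $r$ (so that $|\pi(u)| \le L|r|$ on the segment from $s$ to $r$), is justified exactly as you say: $J_{\ep}^{B}(0)=0$ together with nonexpansivity and monotonicity of the resolvent.
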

\begin{proof}
See \cite[(4.8) in the proof of Lemma 4.2]{K(nonlocal)}.  
\end{proof}
%
\begin{lem}\label{esPbeplam}
There exists $\ep_{1} \in (0, 1)$ such that 
for all $\alpha>0$, $\beta>0$ and $\ep \in (0, \ep_{1})$    
there exist constants $C=C(\alpha, \beta)>0$ 
and $C'=C'(\alpha, \beta, \ep)>0$  satisfying 
\begin{align}
&\|\mu_{\beta, \ep, \lambda}(t)\|_{H}^2 
  + \|\varphi_{\beta, \ep, \lambda}(t)\|_{H}^2 
  + \|\sigma_{\beta, \ep, \lambda}(t)\|_{H}^2 
  + \int_{0}^{t}\|\partial_{t}\varphi_{\beta, \ep, \lambda}(s)\|_{H}^2\,ds 
\leq C, \label{esPbeplam1} 
\\[0mm] 
&\int_{0}^{t}\|\partial_{t}\sigma_{\beta, \ep, \lambda}(s)\|_{H}^2\,ds 
\leq C', \label{esPbeplam2} 
\\[1.5mm]
&\int_{0}^{t}\|\partial_{t}\mu_{\beta, \ep, \lambda}(s)\|_{H}^2\,ds 
\leq C', \label{esPbeplam3} 
\\[1.5mm]
&\int_{0}^{t}\big(\|(-\Delta)_{\lambda}\mu_{\beta, \ep, \lambda}(s)\|_{H}^2  
+ \|(-\Delta)_{\lambda}\varphi_{\beta, \ep, \lambda}(s)\|_{H}^2 
+ \|(-\Delta)_{\lambda}\sigma_{\beta, \ep, \lambda}(s)\|_{H}^2 \bigr)\,ds  
\leq C' \label{esPbeplam4} 
\end{align}
for all $t \in [0, T]$ and all $\lambda>0$. 
\end{lem}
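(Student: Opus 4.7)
The plan is to derive the four estimates in order, starting from an energy identity uniform in $\ep$ and $\lambda$ for \eqref{esPbeplam1}, and then unlocking the higher-order bounds from the equations at the cost of $\ep$-dependent constants.

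For \eqref{esPbeplam1}, I take the $H$-inner product of the first equation of \ref{Pbeplam} with $\mu_{\beta,\ep,\lambda}$, the second with $\partial_{t}\varphi_{\beta,\ep,\lambda}$, and the third with $\sigma_{\beta,\ep,\lambda}$. Using self-adjointness of $(-\Delta)_{\lambda}$ and the chain rule for $G_{\ep}$, the contributions combine so that the cross term $(\partial_{t}\varphi_{\beta,\ep,\lambda},\mu_{\beta,\ep,\lambda})_{H}$ produces $\beta\|\partial_{t}\varphi_{\beta,\ep,\lambda}\|_{H}^{2}$ plus the time derivative of $\tfrac{1}{2}\|(-\Delta)_{\lambda}^{1/2}\varphi_{\beta,\ep,\lambda}\|_{H}^{2}+\tfrac{1}{2}\|\varphi_{\beta,\ep,\lambda}\|_{H}^{2}+\int_{\Omega}G_{\ep}(\varphi_{\beta,\ep,\lambda})$, while the reaction terms cancel into $-p\|\sigma_{\beta,\ep,\lambda}-\mu_{\beta,\ep,\lambda}\|_{H}^{2}$. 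Integrating in time, the initial contribution is controlled uniformly in $\ep,\lambda$: via \eqref{tool2} for $\|\mu_{0\ep}\|_{H}$ and $\|\sigma_{0\ep}\|_{H}$, via \eqref{tool4} for $\|(-\Delta)_{\lambda}^{1/2}\varphi_{0}\|_{H}$, and via $\widehat{B_{\ep}}\leq\widehat{B}$ together with (C3) for $\int_{\Omega}G_{\ep}(\varphi_{0})$. On the left-hand side, Lemma \ref{ineqG} supplies $\int_{\Omega}G_{\ep}(\varphi_{\beta,\ep,\lambda}(t))\geq -(\tfrac{1}{2}\|\pi'\|_{L^{\infty}(\mathbb{R})}+2\|\pi'\|_{L^{\infty}(\mathbb{R})}\ep)\|\varphi_{\beta,\ep,\lambda}(t)\|_{H}^{2}$, which is absorbed into $\tfrac{1}{2}\|\varphi_{\beta,\ep,\lambda}(t)\|_{H}^{2}$ once I choose $\ep_{1}\in(0,1)$ so small that $1-\|\pi'\|_{L^{\infty}(\mathbb{R})}-4\|\pi'\|_{L^{\infty}(\mathbb{R})}\ep_{1}>0$; condition (C4) is precisely what makes such a choice possible. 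This yields \eqref{esPbeplam1} with $C=C(\alpha,\beta)$.

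For \eqref{esPbeplam4}, I test the first equation against $(-\Delta)_{\lambda}\mu_{\beta,\ep,\lambda}$ and the third against $(-\Delta)_{\lambda}\sigma_{\beta,\ep,\lambda}$, applying Young's inequality to handle $(\partial_{t}\varphi_{\beta,\ep,\lambda},(-\Delta)_{\lambda}\mu_{\beta,\ep,\lambda})_{H}$ and the reaction term. Integration yields bounds on $\int_{0}^{t}\|(-\Delta)_{\lambda}\mu_{\beta,\ep,\lambda}\|_{H}^{2}$ and $\int_{0}^{t}\|(-\Delta)_{\lambda}\sigma_{\beta,\ep,\lambda}\|_{H}^{2}$ in terms of the data together with the right-hand side of \eqref{esPbeplam1}; the initial contributions $\|(-\Delta)_{\lambda}^{1/2}\mu_{0\ep}\|_{H}$ and $\|(-\Delta)_{\lambda}^{1/2}\sigma_{0\ep}\|_{H}$ are controlled via \eqref{tool4} by $\|\mu_{0\ep}\|_{V}$ and $\|\sigma_{0\ep}\|_{V}$, which are $\ep$-dependent since the spectral identity $\|w\|_{H}^{2}=\|J_{\ep}^{1/2}w\|_{H}^{2}+\ep\|\nabla J_{\ep}^{1/2}w\|_{H}^{2}$ only yields $\|(1-\ep\Delta)^{-1/2}w\|_{V}^{2}\leq(1+\ep^{-1})\|w\|_{H}^{2}$. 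For $\varphi_{\beta,\ep,\lambda}$, I simply rearrange the second equation as $(-\Delta)_{\lambda}\varphi_{\beta,\ep,\lambda}=\mu_{\beta,\ep,\lambda}-\varphi_{\beta,\ep,\lambda}-\beta\partial_{t}\varphi_{\beta,\ep,\lambda}-G_{\ep}'(\varphi_{\beta,\ep,\lambda})$ and use the Lipschitz bound $|G_{\ep}'(r)|\leq(\ep^{-1}+\|\pi'\|_{L^{\infty}(\mathbb{R})})|r|$ together with \eqref{esPbeplam1}.

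Finally, \eqref{esPbeplam2} and \eqref{esPbeplam3} follow immediately by solving the first and third equations pointwise for $\alpha\partial_{t}\mu_{\beta,\ep,\lambda}$ and $\partial_{t}\sigma_{\beta,\ep,\lambda}$, squaring in $H$, and invoking \eqref{esPbeplam1} and \eqref{esPbeplam4}. The main obstacle is the first step: the constant $C$ in \eqref{esPbeplam1} must be independent of both $\ep$ and $\lambda$, and this forces me to combine the quantitative lower bound of Lemma \ref{ineqG} with the strict inequality $\|\pi'\|_{L^{\infty}(\mathbb{R})}<1$ of (C4) in order to fix the admissible threshold $\ep_{1}$. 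Without this coercivity, the Moreau--Yosida regularization $G_{\ep}(\varphi_{\beta,\ep,\lambda})$ could drive the energy to $-\infty$ and prevent a uniform estimate.
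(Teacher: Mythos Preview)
Your argument for \eqref{esPbeplam1} is exactly the paper's: the same three test functions, the same cancellation of the reaction terms into $-p\|\sigma_{\beta,\ep,\lambda}-\mu_{\beta,\ep,\lambda}\|_{H}^{2}$, and the same use of Lemma~\ref{ineqG} together with (C4) to fix $\ep_{1}$ and recover coercivity in $\|\varphi_{\beta,\ep,\lambda}(t)\|_{H}^{2}$.

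For the remaining three estimates you and the paper proceed in reverse order. The paper first tests the third equation with $\partial_{t}\sigma_{\beta,\ep,\lambda}$ and the first with $\partial_{t}\mu_{\beta,\ep,\lambda}$ to obtain \eqref{esPbeplam2}--\eqref{esPbeplam3} (the term $\tfrac{1}{2}\tfrac{d}{dt}\|(-\Delta)_{\lambda}^{1/2}\cdot\|_{H}^{2}$ again appears, and the $\ep$-dependent initial data $\|\mu_{0\ep}\|_{V}$, $\|\sigma_{0\ep}\|_{V}$ enter exactly as in your approach), and only then reads \eqref{esPbeplam4} off the three equations pointwise. You instead test with $(-\Delta)_{\lambda}\mu_{\beta,\ep,\lambda}$ and $(-\Delta)_{\lambda}\sigma_{\beta,\ep,\lambda}$ to obtain \eqref{esPbeplam4} first, and then read \eqref{esPbeplam2}--\eqref{esPbeplam3} off the equations. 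Both routes are correct and of the same length; the choice of test function merely swaps which of $\|\partial_{t}\cdot\|_{H}^{2}$ and $\|(-\Delta)_{\lambda}\cdot\|_{H}^{2}$ is captured as the dissipation term and which is recovered afterwards by rearranging the equation.
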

\begin{proof}
Let $\alpha, \beta>0$.    
Then we derive from the first equation in \ref{Pbeplam} that 
\begin{align}\label{popo1}
&\frac{\alpha}{2}\frac{d}{dt}\|\mu_{\beta, \ep, \lambda}(t)\|_{H}^2 
+ (\partial_{t}\varphi_{\beta, \ep, \lambda}(t), 
                                          \mu_{\beta, \ep, \lambda}(t))_{H} 
+ ((-\Delta)_{\lambda}\mu_{\beta, \ep, \lambda}(t), 
                                          \mu_{\beta, \ep, \lambda}(t))_{H} 
\\ \notag
&= p(\sigma_{\beta, \ep, \lambda}(t)-\mu_{\beta, \ep, \lambda}(t), 
                                                          \mu_{\beta, \ep, \lambda}(t))_{H}.  
\end{align}
Here it follows from the second equation in \ref{Pbeplam} that 
\begin{align}\label{popo2}
&(\partial_{t}\varphi_{\beta, \ep, \lambda}(t), 
                                          \mu_{\beta, \ep, \lambda}(t))_{H} 
\\ \notag 
&= \beta\|\partial_{t}\varphi_{\beta, \ep, \lambda}(t)\|_{H}^2 
   + \frac{1}{2}\frac{d}{dt}
           \|(-\Delta)_{\lambda}^{1/2}\varphi_{\beta, \ep, \lambda}(t)\|_{H}^2 
\\ \notag 
&\,\quad+\frac{1}{2}\frac{d}{dt}\|\varphi_{\beta, \ep, \lambda}(t)\|_{H}^2 
  +\frac{d}{dt}\int_{\Omega}G_{\ep}(\varphi_{\beta, \ep, \lambda}(t)).   
\end{align} 
Hence from \eqref{popo1} and \eqref{popo2} we have 
\begin{align}\label{popo3}
&\frac{\alpha}{2}\frac{d}{dt}\|\mu_{\beta, \ep, \lambda}(t)\|_{H}^2 
+ \beta\|\partial_{t}\varphi_{\beta, \ep, \lambda}(t)\|_{H}^2 
   + \frac{1}{2}\frac{d}{dt}
           \|(-\Delta)_{\lambda}^{1/2}\varphi_{\beta, \ep, \lambda}(t)\|_{H}^2 
\\ \notag 
&+\frac{1}{2}\frac{d}{dt}\|\varphi_{\beta, \ep, \lambda}(t)\|_{H}^2 
  +\frac{d}{dt}\int_{\Omega}G_{\ep}(\varphi_{\beta, \ep, \lambda}(t))
+ ((-\Delta)_{\lambda}\mu_{\beta, \ep, \lambda}(t), 
                                          \mu_{\beta, \ep, \lambda}(t))_{H} 
\\ \notag
&= p(\sigma_{\beta, \ep, \lambda}(t)-\mu_{\beta, \ep, \lambda}(t), 
                                                          \mu_{\beta, \ep, \lambda}(t))_{H}. 
\end{align}
On the other hand, we see from the third equation in \ref{Pbeplam} that 
\begin{align}\label{popo4}
&\frac{1}{2}\frac{d}{dt}\|\sigma_{\beta, \ep, \lambda}(t)\|_{H}^2 
+ ((-\Delta)_{\lambda}\sigma_{\beta, \ep, \lambda}(t), 
                                          \sigma_{\beta, \ep, \lambda}(t))_{H} 
\\ \notag
&= -p(\sigma_{\beta, \ep, \lambda}(t)-\mu_{\beta, \ep, \lambda}(t), 
                                                             \sigma_{\beta, \ep, \lambda}(t))_{H}. 
\end{align}
Thus we infer from \eqref{popo3} and \eqref{popo4} that 
\begin{align}\label{popo5}
&\frac{\alpha}{2}\frac{d}{dt}\|\mu_{\beta, \ep, \lambda}(t)\|_{H}^2 
+ \beta\|\partial_{t}\varphi_{\beta, \ep, \lambda}(t)\|_{H}^2 
   + \frac{1}{2}\frac{d}{dt}
           \|(-\Delta)_{\lambda}^{1/2}\varphi_{\beta, \ep, \lambda}(t)\|_{H}^2 
\\ \notag 
&+\frac{1}{2}\frac{d}{dt}\|\varphi_{\beta, \ep, \lambda}(t)\|_{H}^2 
  +\frac{d}{dt}\int_{\Omega}G_{\ep}(\varphi_{\beta, \ep, \lambda}(t))
+ ((-\Delta)_{\lambda}\mu_{\beta, \ep, \lambda}(t), 
                                          \mu_{\beta, \ep, \lambda}(t))_{H} 
\\ \notag
&+\frac{1}{2}\frac{d}{dt}\|\sigma_{\beta, \ep, \lambda}(t)\|_{H}^2 
+ ((-\Delta)_{\lambda}\sigma_{\beta, \ep, \lambda}(t), 
                                          \sigma_{\beta, \ep, \lambda}(t))_{H} 
\\ \notag
&= -p\|\sigma_{\beta, \ep, \lambda}(t)-\mu_{\beta, \ep, \lambda}(t)\|_{H}^2.
\end{align}
Here we have from (C4) and Lemma \ref{ineqG} that 
$1-\|\pi'\|_{L^{\infty}(\mathbb{R})} > 0$ and  
there exists $\ep_{1} \in (0, 1)$ such that 
\begin{align}\label{popo6}
&\frac{1}{2}\|\varphi_{\beta, \ep, \lambda}(t)\|_{H}^2 
+ \int_{\Omega}G_{\ep}(\varphi_{\beta, \ep, \lambda}(t)) 
\\ \notag
&\geq \frac{1}{2}(1-\|\pi'\|_{L^{\infty}(\mathbb{R})})
                         \|\varphi_{\beta, \ep, \lambda}(t)\|_{H}^2  
       - 2\|\pi'\|_{L^{\infty}(\mathbb{R})}\ep
                         \|\varphi_{\beta, \ep, \lambda}(t)\|_{H}^2 
\\ \notag
&\geq \frac{1}{4}(1-\|\pi'\|_{L^{\infty}(\mathbb{R})})
                         \|\varphi_{\beta, \ep, \lambda}(t)\|_{H}^2  
\end{align}
for all $t \in [0, T]$, $\ep \in (0, \ep_{1})$ and $\lambda>0$. 
Hence, 
combining 
\eqref{tool2}, \eqref{tool4}, Remark \ref{MYreg}, \eqref{popo5} and \eqref{popo6} 
leads to the inequality    
\begin{align*}
&\frac{\alpha}{2}\|\mu_{\beta, \ep, \lambda}(t)\|_{H}^2 
+ \beta\int_{0}^{t}\|\partial_{t}\varphi_{\beta, \ep, \lambda}(s)\|_{H}^2\,ds  
+ \frac{1}{4}(1-\|\pi'\|_{L^{\infty}(\mathbb{R})})
                         \|\varphi_{\beta, \ep, \lambda}(t)\|_{H}^2  
\\ 
&+\frac{1}{2}\|\sigma_{\beta, \ep, \lambda}(t)\|_{H}^2 
+ p\int_{0}^{t}
            \|\sigma_{\beta, \ep, \lambda}(s)-\mu_{\beta, \ep, \lambda}(s)\|_{H}^2\,ds 
\\ 
&\leq \frac{\alpha}{2}\|\mu_{0}\|_{H}^2 + \frac{1}{2}\|\varphi_{0}\|_{V}^2  
         + \frac{1}{2}\|\varphi_{0}\|_{H}^2 
         + \int_{\Omega}G(\varphi_{0})
         + \frac{1}{2}\|\sigma_{0}\|_{H}^2
\end{align*}
for all $t \in [0, T]$, $\ep \in (0, \ep_{1})$ and $\lambda>0$. 
Thus there exists a constant $C_{1}=C_{1}(\alpha, \beta)>0$ such that 
\begin{align}\label{popo8}
&\|\mu_{\beta, \ep, \lambda}(t)\|_{H}^2 
  + \|\varphi_{\beta, \ep, \lambda}(t)\|_{H}^2 
  + \|\sigma_{\beta, \ep, \lambda}(t)\|_{H}^2 
  \\ \notag 
  &+ \int_{0}^{t}\|\partial_{t}\varphi_{\beta, \ep, \lambda}(s)\|_{H}^2\,ds 
  + p\int_{0}^{t}
            \|\sigma_{\beta, \ep, \lambda}(s)-\mu_{\beta, \ep, \lambda}(s)\|_{H}^2\,ds 
\leq C_{1}
\end{align}
for all $t \in [0, T]$, $\ep \in (0, \ep_{1})$ and $\lambda>0$. 
The third equation in \ref{Pbeplam} and the Young inequality yield that 
\begin{align*}
&\|\partial_{t}\sigma_{\beta, \ep, \lambda}(t)\|_{H}^2 
+ \frac{1}{2}\frac{d}{dt}
            \|(-\Delta)_{\lambda}^{1/2}\sigma_{\beta, \ep, \lambda}(t)\|_{H}^2 
\\ 
&= -p(\sigma_{\beta, \ep, \lambda}(t)-\mu_{\beta, \ep, \lambda}(t), 
                                           \partial_{t}\sigma_{\beta, \ep, \lambda}(t))_{H}
\\ 
&\leq 
   \frac{1}{2}\|\partial_{t}\sigma_{\beta, \ep, \lambda}(t)\|_{H}^2 
   + \frac{p^2}{2}
             \|\sigma_{\beta, \ep, \lambda}(t)-\mu_{\beta, \ep, \lambda}(t)\|_{H}^2 
\end{align*}
and then it follows from \eqref{tool4} and \eqref{popo8} that 
for all $\ep \in (0, \ep_{1})$ 
there exists a constant $C_{2}=C_{2}(\alpha, \beta, \ep)>0$ such that 
\begin{align}\label{popo9} 
\int_{0}^{t}\|\partial_{t}\sigma_{\beta, \ep, \lambda}(s)\|_{H}^2\,ds 
\leq C_2
\end{align}
for all $t \in [0, T]$ and all $\lambda>0$.   
On the other hand, we derive 
from the first equation in \ref{Pbeplam} and the Young inequality 
that 
\begin{align*}
&\alpha\|\partial_{t}\mu_{\beta, \ep, \lambda}(t)\|_{H}^2 
+ \frac{1}{2}\frac{d}{dt}
            \|(-\Delta)_{\lambda}^{1/2}\mu_{\beta, \ep, \lambda}(t)\|_{H}^2 
\\ 
&= p(\sigma_{\beta, \ep, \lambda}(t)-\mu_{\beta, \ep, \lambda}(t), 
                                  \partial_{t}\mu_{\beta, \ep, \lambda}(t))_{H} 
     - (\partial_{t}\varphi_{\beta, \ep, \lambda}(t), 
                                  \partial_{t}\mu_{\beta, \ep, \lambda}(t))_{H} 
\\ 
&\leq \frac{\alpha}{2}\|\partial_{t}\mu_{\beta, \ep, \lambda}(t)\|_{H}^2 
        + C_3(p\|\sigma_{\beta, \ep, \lambda}(t)-\mu_{\beta, \ep, \lambda}(t)\|_{H}^2 
                 + \|\partial_{t}\varphi_{\beta, \ep, \lambda}(t)\|_{H}^2)
\end{align*}
with some constant $C_{3}=C_{3}(\alpha)>0$, 
and hence from \eqref{tool4} and \eqref{popo8} we have 
\begin{align}\label{popo10}
\int_{0}^{t}\|\partial_{t}\mu_{\beta, \ep, \lambda}(s)\|_{H}^2\,ds 
\leq C_{4}
\end{align}
for all $t \in [0, T]$, $\ep \in (0, \ep_{1})$ and $\lambda>0$ 
with some constant $C_{4}=C_{4}(\alpha, \beta, \ep)>0$.

Therefore we see from \eqref{popo8}-\eqref{popo10} and  
each equations in \ref{Pbeplam} that Lemma \ref{esPbeplam} holds. 
\end{proof}
To establish existence of weak solutions to \ref{Pbeta} 
we consider the approximation  
\begin{equation}\label{Pbep}\tag*{(P)$_{\beta, \ep}$}
  \begin{cases} 
    \alpha\partial_{t} \mu_{\beta, \ep} 
         + \partial_{t} \varphi_{\beta, \ep}  
         -\Delta\mu_{\beta, \ep} 
         = p(\sigma_{\beta, \ep} 
              - \mu_{\beta, \ep})  
         & \mbox{in}\ \Omega\times(0, T),
 \\[1mm]
         \mu_{\beta, \ep} 
         = \beta\partial_{t}\varphi_{\beta, \ep} 
            + (-\Delta+1)\varphi_{\beta, \ep} 
            + G_{\ep}'(\varphi_{\beta, \ep}) 
         & \mbox{in}\ \Omega\times(0, T),
 \\[1mm]
         \partial_{t} \sigma_{\beta, \ep}  
         -\Delta\sigma_{\beta, \ep} 
         = -p(\sigma_{\beta, \ep} 
                - \mu_{\beta, \ep})   
         & \mbox{in}\ \Omega\times(0, T), 
 \\[1mm]
         \partial_{\nu}\mu_{\beta, \ep} 
         = \partial_{\nu}\varphi_{\beta, \ep} = \partial_{\nu}\sigma_{\beta, \ep} = 0                                    
         & \mbox{on}\ \partial\Omega\times(0, T),  
 \\[0.5mm]
        \mu_{\beta, \ep}(0) = \mu_{0\ep}, 
        \varphi_{\beta, \ep}(0) = \varphi_{0}, 
        \sigma_{\beta, \ep}(0) = \sigma_{0\ep}                                          
         & \mbox{in}\ \Omega.
  \end{cases}
\end{equation}
Then this paper defines weak solutions of \ref{Pbep} as follows.
%
%
%
%
 \begin{df}\label{defsolPbep}
 A triplet   
 $(\mu_{\beta, \ep}, \varphi_{\beta, \ep}, 
                                                       \sigma_{\beta, \ep})$ 
with 
    \begin{align*}
    &\mu_{\beta, \ep}, \varphi_{\beta, \ep}, \sigma_{\beta, \ep} 
          \in H^1(0, T; H) \cap L^2(0, T; W) 
    \end{align*}
 is called a {\it weak solution} of \ref{Pbep} if 
 $(\mu_{\beta, \ep}, \varphi_{\beta, \ep}, 
                                                       \sigma_{\beta, \ep})$ 
 satisfies 
    \begin{align}
        & \alpha(\partial_{t}\mu_{\beta, \ep}, v)_{H}
          + \bigl(\partial_{t}\varphi_{\beta, \ep}, v\bigr)_{H}
          + \bigl(\nabla\mu_{\beta, \ep}, \nabla v\bigr)_{H} 
          = 
          p(\sigma_{\beta, \ep} - \mu_{\beta, \ep}, v)_{H} 
         \label{defsolPbep1} \\ \notag 
         &\hspace{9cm}\mbox{a.e.\ on}\ (0, T)\quad  \mbox{for all }\ v\in V,  
     \\[2mm]
        & \mu_{\beta, \ep} = 
         \beta\partial_{t}\varphi_{\beta, \ep} 
         + (-\Delta + 1)\varphi_{\beta, \ep} 
         + G_{\ep}'(\varphi_{\beta, \ep})  \quad  
         \mbox{a.e.\ on}\ \Omega\times(0, T), \label{defsolPbep2}
     \\[2mm]
         &  (\partial_{t}\sigma_{\beta, \ep}, v)_{H} 
            + \bigl(\nabla\sigma_{\beta, \ep}, \nabla v\bigr)_{H} 
          = 
          -p(\sigma_{\beta, \ep}-\mu_{\beta, \ep}, v)_{H} 
          \label{defsolPbep3} 
      \\ \notag 
         &\hspace{9cm}\mbox{a.e.\ on}\ (0, T)\quad  \mbox{for all }\ v\in V,  
     \\[2mm]
         & \mu_{\beta, \ep}(0) = \mu_{0\ep},\ 
            \varphi_{\beta, \ep}(0) = \varphi_{0},\ 
        \sigma_{\beta, \ep}(0) = \sigma_{0\ep} 
                                                  \quad \mbox{a.e.\ on}\ \Omega. 
     \label{defsolPbep4}
     \end{align}
 \end{df}
\begin{lem}\label{solPbep}
Let $\ep_{1}$ be as in Lemma \ref{esPbeplam}. 
Then for all $\alpha>0$, $\beta>0$ and $\ep \in (0, \ep_{1})$     
there exists a weak solution 
$(\mu_{\beta, \ep}, \varphi_{\beta, \ep}, 
                                                       \sigma_{\beta, \ep})$ 
of {\rm \ref{Pbep}}. 
\end{lem}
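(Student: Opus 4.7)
The plan is to pass to the limit $\lambda = \lambda_j \searrow 0$ in the approximating problem \ref{Pbeplam}, using the uniform estimates of Lemma \ref{esPbeplam} together with the compactness results developed in Section \ref{Sec2}. Since $\Omega$ may be unbounded we cannot expect compactness globally; the whole scheme will therefore be anchored on arbitrary bounded subdomains $D,E \subset \Omega$ with smooth boundary.

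First I would fix $\alpha,\beta>0$ and $\ep\in(0,\ep_{1})$ and, from the $\lambda$-uniform bounds \eqref{esPbeplam1}--\eqref{esPbeplam4}, extract subsequences so that, as $\lambda=\lambda_j\searrow0$,
\begin{align*}
&\mu_{\beta,\ep,\lambda},\ \varphi_{\beta,\ep,\lambda},\ \sigma_{\beta,\ep,\lambda}
   \to \mu_{\beta,\ep},\ \varphi_{\beta,\ep},\ \sigma_{\beta,\ep}
   \quad\text{weakly in } H^1(0,T;H), \\
&(-\Delta)_{\lambda}\mu_{\beta,\ep,\lambda},\ (-\Delta)_{\lambda}\varphi_{\beta,\ep,\lambda},\
   (-\Delta)_{\lambda}\sigma_{\beta,\ep,\lambda} \to \eta_{\mu},\ \eta_{\varphi},\ \eta_{\sigma}
   \quad\text{weakly in } L^2(0,T;H).
\end{align*}
Applying Lemma \ref{keylemma2} to each of $\mu_{\beta,\ep,\lambda},\varphi_{\beta,\ep,\lambda},\sigma_{\beta,\ep,\lambda}$ yields strong convergence in $L^2(0,T;L^2(D))$ for every bounded smooth $D\subset\Omega$ and places the three limits in $L^2(0,T;W)$. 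To identify $\eta_{\varphi}=-\Delta\varphi_{\beta,\ep}$ (and analogously for $\mu,\sigma$) I would use the identity $\varphi_{\beta,\ep,\lambda}=\lambda(-\Delta)_{\lambda}\varphi_{\beta,\ep,\lambda}+(I-\lambda\Delta)^{-1}\varphi_{\beta,\ep,\lambda}$ together with the weak convergences and the closedness of the Neumann Laplacian on $W$; this both delivers the regularity $\varphi_{\beta,\ep}\in L^2(0,T;W)$ and encodes the boundary condition $\partial_\nu\varphi_{\beta,\ep}=0$.

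Next I would pass to the limit in each equation of \ref{Pbeplam} tested against $v\in V$. The linear terms pass by the weak convergences just listed; the source term $p(\sigma_{\beta,\ep,\lambda}-\mu_{\beta,\ep,\lambda})$ passes by the weak $L^2(0,T;H)$ convergence of $\sigma_{\beta,\ep,\lambda}-\mu_{\beta,\ep,\lambda}$. The nonlinear term $G_{\ep}'(\varphi_{\beta,\ep,\lambda})=B_{\ep}(\varphi_{\beta,\ep,\lambda})+\pi(\varphi_{\beta,\ep,\lambda})$ is handled using that $B_{\ep}$ and $\pi$ are (for fixed $\ep$) Lipschitz continuous with $B_{\ep}(0)=\pi(0)=0$: the strong $L^2(0,T;L^2(D))$ convergence on each $D$, combined with the uniform $L^2(0,T;H)$ bound on $G_{\ep}'(\varphi_{\beta,\ep,\lambda})$ (from Lemma \ref{esPbeplam} via \eqref{defsolPbep2}), gives convergence to $G_{\ep}'(\varphi_{\beta,\ep})$ in $L^2(0,T;H)$. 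This produces \eqref{defsolPbep1}--\eqref{defsolPbep3}.

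The delicate part will be the initial conditions \eqref{defsolPbep4}. Since only $L^2(0,T;H)$ bounds on the time derivatives are at our disposal, I would invoke Lemma \ref{keylemma}: for an arbitrary bounded smooth $E\subset\Omega$,
\begin{align*}
J_{1}^{1/2}\mu_{\beta,\ep,\lambda}\to J_{1}^{1/2}\mu_{\beta,\ep},\quad
J_{1}^{1/2}\varphi_{\beta,\ep,\lambda}\to J_{1}^{1/2}\varphi_{\beta,\ep},\quad
J_{1}^{1/2}\sigma_{\beta,\ep,\lambda}\to J_{1}^{1/2}\sigma_{\beta,\ep}
\end{align*}
in $C([0,T];L^{2}(E))$. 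Evaluating at $t=0$ and using the initial data of \ref{Pbeplam} gives $J_{1}^{1/2}\mu_{\beta,\ep}(0)=J_{1}^{1/2}\mu_{0\ep}$ on $E$; since $E$ is arbitrary and $J_{1}^{1/2}$ is injective on $H$, this yields $\mu_{\beta,\ep}(0)=\mu_{0\ep}$ in $H$, and likewise for $\varphi_{\beta,\ep}(0)$ and $\sigma_{\beta,\ep}(0)$. The main obstacle is precisely this localization-to-$E$ argument: in the bounded-domain case the initial conditions follow from a single continuous embedding, whereas here I must use the smoothing operator $J_{1}^{1/2}$ to trade $V$-regularity for compactness on every $E$ and then exhaust $\Omega$ to recover the global identity.
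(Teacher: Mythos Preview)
Your proposal is correct and follows essentially the same approach as the paper: extract weak limits from the $\lambda$-uniform bounds of Lemma \ref{esPbeplam}, use Lemma \ref{keylemma2} to obtain local strong convergence of $\varphi_{\beta,\ep,\lambda}$ so that the Lipschitz nonlinearity $G_{\ep}'$ can be identified, and use Lemma \ref{keylemma} with the smoothing operator $J_{1}^{1/2}$ on arbitrary bounded $E\subset\Omega$ to recover the initial conditions. One small imprecision: local strong convergence on every bounded $D$ together with a uniform $L^{2}(0,T;H)$ bound on $G_{\ep}'(\varphi_{\beta,\ep,\lambda})$ yields only \emph{weak} convergence of $G_{\ep}'(\varphi_{\beta,\ep,\lambda})$ in $L^{2}(0,T;H)$ (the weak limit is identified locally, hence globally), not strong convergence---but weak convergence is all that is needed to pass to the limit in \eqref{defsolPbep2}, and this is exactly what the paper does by testing against compactly supported $\psi$.
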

\begin{proof}
Let $\alpha, \beta>0$ and let $\ep\in(0, \ep_{1})$.    
Then the estimates \eqref{esPbeplam1}-\eqref{esPbeplam4} yield that 
there exist some functions 
$\mu_{\beta, \ep}, \varphi_{\beta, \ep}, \sigma_{\beta, \ep} 
         \in H^1(0, T; H) \cap L^2(0, T; W)$ satisfying  
\begin{align}
&\mu_{\beta, \ep, \lambda} \to \mu_{\beta, \ep}
\quad \mbox{weakly$^{*}$ in}\ L^{\infty}(0, T; H), 
\label{Kuri1}\\ 
&\varphi_{\beta, \ep, \lambda} \to \varphi_{\beta, \ep}
\quad \mbox{weakly$^{*}$ in}\ L^{\infty}(0, T; H), 
\label{Kuri2}\\
&\sigma_{\beta, \ep, \lambda} \to \sigma_{\beta, \ep}
\quad \mbox{weakly$^{*}$ in}\ L^{\infty}(0, T; H), 
\label{Kuri3}\\
&\partial_{t}\varphi_{\beta, \ep, \lambda} \to 
\partial_{t}\varphi_{\beta, \ep}  
\quad \mbox{weakly in}\ L^2(0, T; H), 
\label{Kuri4}\\
&\partial_{t}\sigma_{\beta, \ep, \lambda} \to 
\partial_{t}\sigma_{\beta, \ep}  
\quad \mbox{weakly in}\ L^2(0, T; H), 
\label{Kuri5}\\
&\partial_{t}\mu_{\beta, \ep, \lambda} \to 
\partial_{t}\mu_{\beta, \ep}  
\quad \mbox{weakly in}\ L^2(0, T; H), 
\label{Kuri6}\\
&(-\Delta)_{\lambda}\mu_{\beta, \ep, \lambda}  \to 
-\Delta \mu_{\beta, \ep}  \quad \mbox{weakly in}\ L^2(0, T; H), 
\label{Kuri7}\\
&(-\Delta)_{\lambda}\varphi_{\beta, \ep, \lambda}  \to 
-\Delta \varphi_{\beta, \ep}  \quad \mbox{weakly in}\ L^2(0, T; H), 
\label{Kuri8}\\
&(-\Delta)_{\lambda}\sigma_{\beta, \ep, \lambda}  \to 
-\Delta \sigma_{\beta, \ep}  \quad \mbox{weakly in}\ L^2(0, T; H) 
\label{Kuri9}
\end{align}
as $\lambda = \lambda_j \searrow0$. 
We can obtain \eqref{defsolPbep1} 
by \eqref{Kuri1}, \eqref{Kuri3}, \eqref{Kuri4}, \eqref{Kuri6} and \eqref{Kuri7}. 
Now we show \eqref{defsolPbep2} and \eqref{defsolPbep3}. 
To verify \eqref{defsolPbep2} it suffices to confirm that   
\begin{align}\label{henbun}
&\int_0^T \Bigl(\int_{\Omega} 
     \bigl(
     \mu_{\beta, \ep}(t)-\beta\partial_{t}\varphi_{\beta, \ep}(t)
     -(-\Delta+1)\varphi_{\beta, \ep}(t) 
\\ \notag
     &\hspace{70mm}-G_{\ep}'(\varphi_{\beta, \ep}(t)) 
     \bigr)\psi(t) \Bigr)\,dt 
= 0 
\end{align}
for all $\psi \in C_{\mathrm c}^{\infty}([0, T] \times \overline{\Omega})$. 
From the second equation in \ref{Pbeplam} 
we infer that 
\begin{align}\label{cute1}
&0=
\int_{0}^{T} 
\bigl(\mu_{\beta, \ep, \lambda}(t)
     -\beta\partial_{t}\varphi_{\beta, \ep, \lambda}(t)
     -((-\Delta)_{\lambda}+1)\varphi_{\beta, \ep, \lambda}(t), 
                                                                                 \psi(t)\bigr)_{H}\,dt 
\\ \notag 
&\,\quad
- \int_{0}^{T} (G_{\ep}'(\varphi_{\beta, \ep, \lambda}(t)), 
                                                                                    \psi(t))_{H}\,dt.
\end{align} 
Here there exists a bounded domain $D\subset\Omega$ with smooth boundary 
such that 
\begin{align*}
\mbox{supp}\,\psi \subset D\times(0, T).   
\end{align*}
It follows from \eqref{esPbeplam1}, \eqref{esPbeplam4} 
and Lemma \ref{keylemma2} 
that 
\begin{align}\label{tuyoi}
\varphi_{\beta, \ep, \lambda} \to \varphi_{\beta, \ep} 
\quad \mbox{in}\ 
L^2(0, T; L^2(D))
\end{align}
as $\lambda = \lambda_j \searrow0$. 
Since $G_{\ep}'=B_{\ep}+\pi$ is Lipschitz continuous, 
we see from \eqref{tuyoi} that 
\begin{align}\label{cute2}
\int_{0}^{T} (G_{\ep}'(\varphi_{\beta, \ep, \lambda}(t)), \psi(t))_{H}\,dt 
&= \int_0^T 
     \Bigl(\int_{D} G_{\ep}'(\varphi_{\beta, \ep, \lambda}(t))\psi(t) \Bigr)\,dt 
\\ \notag 
&\to \int_0^T 
             \Bigl(\int_{D} 
               G_{\ep}'(\varphi_{\beta, \ep}(t))\psi(t) \Bigr)\,dt  
\\ \notag
&= \int_{0}^{T} (G_{\ep}'(\varphi_{\beta, \ep}(t)), \psi(t))_{H}\,dt
\end{align}
as $\lambda = \lambda_j \searrow0$. 
Thus \eqref{Kuri1}, \eqref{Kuri2}, \eqref{Kuri4}, \eqref{Kuri8}, 
\eqref{cute1} and \eqref{cute2} lead to \eqref{henbun}. 
On the other hand, 
combining \eqref{Kuri1}, \eqref{Kuri3}, \eqref{Kuri5} and \eqref{Kuri9} 
leads to \eqref{defsolPbep3}. 

Next we prove \eqref{defsolPbep4}. 
Let $E \subset \Omega$ be 
an arbitrary bounded domain with smooth boundary.  
Then from \eqref{esPbeplam1}, \eqref{esPbeplam3} 
and Lemma \ref{keylemma} we have 
\begin{equation}\label{itiyou}
J^{1/2}_1\mu_{\beta, \ep, \lambda} \to 
J^{1/2}_1\mu_{\beta, \ep} 
\quad \mbox{in}\ C([0, T]; L^2(E))
\end{equation}
as $\lambda = \lambda_j \searrow0$. 
Therefore, 
since $\mu_{\beta, \ep, \lambda}(0)=\mu_{0\ep}$, 
\eqref{itiyou} yields that 
$$
J^{1/2}_1\mu_{\beta, \ep}(0)=J^{1/2}_1\mu_{0\ep} 
\quad \mbox{a.e.\ in}\ E.
$$
Because $E \subset \Omega$ is arbitrary, we conclude that 
$$
J^{1/2}_1\mu_{\beta, \ep}(0)=J^{1/2}_1\mu_{0\ep} 
\quad \mbox{a.e.\ in}\ \Omega.
$$ 
Hence, since $J^{1/2}_1\mu_{0\ep} \in H$, we see that 
$$
J^{1/2}_1\mu_{\beta, \ep}(0)=J^{1/2}_1\mu_{0\ep}  
\quad \mbox{in}\ H, 
$$
that is, it holds that 
$$
\mu_{\beta, \ep}(0)=\mu_{0\ep} 
\quad \mbox{in}\ H. 
$$ 
Similarly, we can prove that 
$$
\varphi_{\beta, \ep}(0)=\varphi_{0}, \quad 
\sigma_{\beta, \ep}(0)=\sigma_{0\ep}  
\qquad \mbox{in}\ H. 
$$
Thus \eqref{defsolPbep4} holds. 
\end{proof}
%
\begin{lem}\label{uniquesolPbep} 
Let $\ep_{1}$ be as in Lemma \ref{esPbeplam}. 
Then there exists $\alpha_{1} \in (0, 1)$ such that 
for all $\alpha \in (0, \alpha_{1})$, $\beta>0$ and $\ep \in (0, \ep_{1})$ 
the weak solution 
$(\mu_{\beta, \ep}, \varphi_{\beta, \ep}, 
                                                       \sigma_{\beta, \ep})$ 
of {\rm \ref{Pbep}} is unique. 
\end{lem}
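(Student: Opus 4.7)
The plan is to take two weak solutions of \ref{Pbep} with identical initial data, write the differences $\bar\mu := \mu_{\beta,\ep,1}-\mu_{\beta,\ep,2}$, $\bar\varphi := \varphi_{\beta,\ep,1}-\varphi_{\beta,\ep,2}$, $\bar\sigma := \sigma_{\beta,\ep,1}-\sigma_{\beta,\ep,2}$, and run a standard energy estimate on the difference system
\begin{align*}
& \alpha\partial_{t}\bar\mu + \partial_{t}\bar\varphi - \Delta\bar\mu = p(\bar\sigma-\bar\mu),\\
& \bar\mu = \beta\partial_{t}\bar\varphi + (-\Delta+1)\bar\varphi + G_{\ep}'(\varphi_{\beta,\ep,1}) - G_{\ep}'(\varphi_{\beta,\ep,2}),\\
& \partial_{t}\bar\sigma - \Delta\bar\sigma = -p(\bar\sigma-\bar\mu),
\end{align*}
with homogeneous Neumann data and vanishing initial conditions. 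The regularity from Definition \ref{defsolPbep} guarantees $\bar\mu,\bar\varphi,\bar\sigma\in H^1(0,T;H)\cap L^2(0,T;W)$, which is enough to justify all test-function manipulations below classically.

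First I would test the $\bar\mu$-equation by $\bar\mu\in V$ and the $\bar\sigma$-equation by $\bar\sigma\in V$, producing $\frac{\alpha}{2}\frac{d}{dt}\|\bar\mu\|_{H}^{2}+\|\nabla\bar\mu\|_{H}^{2}+(\partial_{t}\bar\varphi,\bar\mu)_{H}=p(\bar\sigma-\bar\mu,\bar\mu)_{H}$ and $\frac{1}{2}\frac{d}{dt}\|\bar\sigma\|_{H}^{2}+\|\nabla\bar\sigma\|_{H}^{2}=-p(\bar\sigma-\bar\mu,\bar\sigma)_{H}$. Summing, the cross term $p(\bar\sigma,\bar\mu)_{H}-p(\bar\mu,\bar\sigma)_{H}$ cancels and the remaining $-p\|\bar\mu\|_{H}^{2}-p\|\bar\sigma\|_{H}^{2}$ is nonpositive. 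Then I would substitute the $\bar\varphi$-equation into the coupling term $(\partial_{t}\bar\varphi,\bar\mu)_{H}$; integration by parts (using $\partial_{\nu}\bar\varphi=0$) converts the $(-\Delta+1)\bar\varphi$ piece into $\frac{1}{2}\frac{d}{dt}\|\bar\varphi\|_{V}^{2}$, while the $\beta\partial_{t}\bar\varphi$ piece yields $\beta\|\partial_{t}\bar\varphi\|_{H}^{2}$.

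The main obstacle is the residual $(\partial_{t}\bar\varphi,G_{\ep}'(\varphi_{\beta,\ep,1})-G_{\ep}'(\varphi_{\beta,\ep,2}))_{H}$. Since $G_{\ep}'=B_{\ep}+\pi$ with $B_{\ep}$ Lipschitz with constant $1/\ep$ and $\pi$ globally Lipschitz, Young's inequality absorbs half of the $\beta\|\partial_{t}\bar\varphi\|_{H}^{2}$ term at the cost of $C(\beta,\ep)\|\bar\varphi\|_{H}^{2}\leq C(\beta,\ep)\|\bar\varphi\|_{V}^{2}$ on the right. Collecting everything gives
\[
\frac{\alpha}{2}\frac{d}{dt}\|\bar\mu\|_{H}^{2}+\frac{\beta}{2}\|\partial_{t}\bar\varphi\|_{H}^{2}+\frac{1}{2}\frac{d}{dt}\|\bar\varphi\|_{V}^{2}+\|\nabla\bar\mu\|_{H}^{2}+\frac{1}{2}\frac{d}{dt}\|\bar\sigma\|_{H}^{2}+\|\nabla\bar\sigma\|_{H}^{2}\leq C(\beta,\ep)\|\bar\varphi\|_{V}^{2}.
\]
Integrating in time and invoking Gronwall forces $\bar\varphi\equiv 0$; the $\bar\varphi$-equation then yields $\bar\mu\equiv 0$, and the remaining inequality gives $\bar\sigma\equiv 0$. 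The smallness $\alpha<\alpha_{1}$ does not appear to be strictly required for this scheme; I expect it is imposed either to keep the coefficient in front of $\frac{d}{dt}\|\bar\mu\|_{H}^{2}$ bounded below away from $0$ uniformly with the existence range of Theorem \ref{maintheorem1}, or to allow a sharper combined test with $\alpha\bar\mu+\bar\varphi$ that absorbs $p\|\bar\mu\|_{H}^{2}$ without appealing to $\ep$-dependent constants; pinning down the exact role of $\alpha_{1}$ is the delicate bookkeeping step.
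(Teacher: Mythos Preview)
Your argument is correct and yields uniqueness for \ref{Pbep}, but it follows a different route from the paper's. The paper introduces the combined variable $\theta:=\alpha\overline{\mu}+\overline{\varphi}+\overline{\sigma}$, tests the sum of the $\mu$- and $\sigma$-equations against $F^{-1}\theta$ in $V^{*}$, tests the second equation against $\overline{\varphi}$ (not $\partial_{t}\overline{\varphi}$), and tests the third against $\overline{\sigma}$. In that scheme the nonlinear term contributes only $(B_{\ep}(\varphi_{1})-B_{\ep}(\varphi_{2}),\overline{\varphi})_{H}\geq 0$ by monotonicity, never the Lipschitz constant $1/\ep$; the viscosity term gives $-\tfrac{\beta}{2}\|\overline{\varphi}(t)\|_{H}^{2}\leq 0$, which is simply dropped. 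The price is that the $\pi$-term and the various cross terms must be controlled without a good $\|\overline{\varphi}\|_{H}^{2}$ under a time derivative, and this is exactly where the smallness $1-2\|\pi'\|_{L^{\infty}}^{2}\alpha>0$ enters, producing the threshold $\alpha_{1}$.

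Your approach instead exploits the two regularisations: you absorb the nonlinear difference into $\beta\|\partial_{t}\overline{\varphi}\|_{H}^{2}$ via the $\ep$-Lipschitz bound, obtaining a Gronwall constant of order $(1/\ep+\|\pi'\|_{L^{\infty}})^{2}/\beta$. This is perfectly valid here and, as you observe, does not need $\alpha$ small. The reason the paper does not argue this way is that the very same computation (with $B$ replacing $B_{\ep}$, and with $\beta,\eta$ possibly different or zero) is reused verbatim for the uniqueness of \ref{Pbeta}, for the Cauchy criterion in Lemma~\ref{lemCauchy}, and for the uniqueness of \eqref{P}; in those settings there is no Lipschitz constant for $B$ and no uniform lower bound on $\beta$ to absorb into, so only the monotonicity-based $V^{*}$ argument survives. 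Your proof is thus correct but non-transferable, whereas the paper's is a single template that propagates to the limit.
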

\begin{proof}
We can show this lemma
in reference to \cite[Proof of Theorem 2.3]{CGRS-2017}. 
Let $(\mu_{j}, \varphi_{j}, \sigma_{j})$, $j=1, 2$, be two weak solutions of 
\ref{Pbep}. 
We put $\overline{\mu}:=\mu_{1}-\mu_{2}, 
\overline{\varphi}:=\varphi_{1}-\varphi_{2}, 
\overline{\sigma}:=\sigma_{1}-\sigma_{2}, 
\theta:=\alpha\overline{\mu}+\overline{\varphi}+\overline{\sigma}$ and 
$\overline{R}:=R_{1}-R_{2}$ ($R_{j}:=p(\sigma_{j}-\mu_{j})$, $j=1, 2$). 
Then we derive from 
\eqref{defF}, \eqref{innerVstar}, \eqref{defsolPbep1} and \eqref{defsolPbep3} 
that  
\begin{align*}
\frac{1}{2}\frac{d}{dt}\|\theta(t)\|_{V^{*}}^2 
&= \langle \partial_{t}\theta(t), 
                             F^{-1}\theta(t) \rangle_{V^{*}, V} 
\\ 
&=-\bigl(\nabla(\overline{\mu}(t) + \overline{\sigma}(t)), 
                                        \nabla F^{-1}\theta(t)\bigr)_{H} 
\\ 
&=-(F^{-1}\theta(t), \overline{\mu}(t) + \overline{\sigma}(t))_{V} 
    + \langle \overline{\mu}(t) + \overline{\sigma}(t),  
                                                    F^{-1}\theta(t) \rangle_{V^{*}, V} 
\\
&= -(\theta(t), \overline{\mu}(t) + \overline{\sigma}(t))_{H} 
    + (\overline{\mu}(t) + \overline{\sigma}(t), \theta(t))_{V^{*}},  
\end{align*}
and hence we can obtain that 
\begin{align}\label{toto1}
\frac{1}{2}\|\theta(t)\|_{V^{*}}^2 
+ \int_{0}^{t}(\theta(s), \overline{\mu}(s) + \overline{\sigma}(s))_{H}\,ds  
= \int_{0}^{t}(\overline{\mu}(s) + \overline{\sigma}(s), \theta(s))_{V^{*}}\,ds.
\end{align}
On the other hand, we infer 
from \eqref{defsolPbep2} and \eqref{defsolPbep3} that 
\begin{align}\label{toto2}
&\int_{0}^{t}\|\overline{\varphi}(s)\|_{V}^2\,ds 
+ \int_{0}^{t}(B_{\ep}(\varphi_{1}(s))-B_{\ep}(\varphi_{2}(s)), 
                                                    \overline{\varphi}(s))_{H}\,ds 
-\int_{0}^{t} (\overline{\mu}(s), \overline{\varphi}(s))_{H}\,ds 
\\ \notag 
&= -\frac{\beta}{2}\|\overline{\varphi}(t)\|_{H}^2 
    -\int_{0}^{t}(\pi(\varphi_{1}(s))-\pi(\varphi_{2}(s)), 
                                                      \overline{\varphi}(s))_{H}\,ds  
\end{align}
and 
\begin{align}\label{toto3}
\frac{1}{2}\|\overline{\sigma}(t)\|_{H}^2 
+ \int_{0}^{t}\|\overline{\sigma}(s)\|_{V}^2\,ds 
= -\int_{0}^{t} (\overline{R}(s), \overline{\sigma}(s))\,ds 
    + \int_{0}^{t}\|\overline{\sigma}(s)\|_{H}^2\,ds. 
\end{align}
Thus, combining \eqref{toto1}-\eqref{toto3}, we have 
\begin{align}\label{toto4}
&\frac{1}{2}\|\theta(t)\|_{V^{*}}^2 
+ \int_{0}^{t}(\theta(s), \overline{\mu}(s) + \overline{\sigma}(s))_{H}\,ds 
+ \int_{0}^{t}\|\overline{\varphi}(s)\|_{V}^2\,ds 
\\ \notag 
&+ \int_{0}^{t}(B_{\ep}(\varphi_{1}(s))-B_{\ep}(\varphi_{2}(s)), 
                                                             \overline{\varphi}(s))_{H}\,ds 
-\int_{0}^{t} (\overline{\mu}(s), \overline{\varphi}(s))_{H}\,ds 
\\ \notag 
&+ \frac{1}{2}\|\overline{\sigma}(t)\|_{H}^2 
+ \int_{0}^{t}\|\overline{\sigma}(s)\|_{V}^2\,ds 
\\ \notag 
&= \int_{0}^{t}(\overline{\mu}(s) + \overline{\sigma}(s), \theta(s))_{V^{*}}\,ds 
-\frac{\beta}{2}\|\overline{\varphi}(t)\|_{H}^2 
-\int_{0}^{t}(\pi(\varphi_{1}(s))-\pi(\varphi_{2}(s)), 
                                                                  \overline{\varphi}(s))_{H}\,ds  
\\ \notag 
&\,\quad -\int_{0}^{t} (\overline{R}(s), \overline{\sigma}(s))\,ds 
    + \int_{0}^{t}\|\overline{\sigma}(s)\|_{H}^2\,ds. 
\end{align} 
Here we derive from the Young inequality that 
for all $\alpha>0$ and all $\delta>0$ there exist 
constants 
$C_{1}=C_{1}(\alpha)>0$, $C_{2}=C_{2}(\delta)>0$, $C_{3}=C_{3}(\alpha)>0$, 
$C_{4}=C_{4}(\alpha) > 0$ and $C_{5}=C_{5}(\delta) > 0$ 
such that 
\begin{align}\label{toto5}
&\int_{0}^{t}(\overline{\mu}(s) + \overline{\sigma}(s), \theta(s))_{V^{*}}\,ds  
\\ \notag 
&\leq \frac{\alpha}{8}\int_{0}^{t} \|\overline{\mu}(s)\|_{H}^2\,ds 
      + \frac{\alpha}{8}\int_{0}^{t} \|\overline{\sigma}(s)\|_{H}^2\,ds 
      + C_{1}\int_{0}^{t} \|\theta(s)\|_{V^{*}}^2\,ds, \\[4mm] \label{toto6}
&-\int_{0}^{t}(\pi(\varphi_{1}(s))-\pi(\varphi_{2}(s)), \overline{\varphi}(s))_{H}\,ds 
\\ \notag 
&\leq \|\pi'\|_{L^{\infty}(\mathbb{R})}\int_{0}^{t}\|\overline{\varphi}(s)\|_{H}^2\,ds 
\\ \notag 
&= \|\pi'\|_{L^{\infty}(\mathbb{R})}
     \int_{0}^{t} \bigl( 
                   \langle \theta(s), \overline{\varphi}(s) \rangle_{V^{*}, V} 
                   - \alpha(\overline{\varphi}(s), \overline{\mu}(s))_{H} 
                   - (\overline{\varphi}(s), \overline{\sigma}(s))_{H} 
                   \bigr)\,ds 
\\ \notag 
&\leq \delta\int_{0}^{t}\|\overline{\varphi}(s)\|_{V}^2\,ds 
         + C_2\int_{0}^{t}\|\theta(s)\|_{V^{*}}^2\,ds 
         + \frac{\alpha}{8}\int_{0}^{t}\|\overline{\mu}(s)\|_{H}^2\,ds 
\\ \notag 
   &\,\quad + 2\|\pi'\|_{L^{\infty}(\mathbb{R})}^2\alpha
                                  \int_{0}^{t}\|\overline{\varphi}(s)\|_{V}^2\,ds 
        + C_{2}\int_{0}^{t}\|\overline{\sigma}(s)\|_{H}^2\,ds,  \\[4mm] \label{toto7}
&-\int_{0}^{t} (\overline{R}(s), \overline{\sigma}(s))\,ds 
\\ \notag  
&= -p\int_{0}^{t}\|\overline{\sigma}(s)\|_{H}^2\,ds 
  + p\int_{0}^{t} (\overline{\mu}(s), \overline{\sigma}(s))\,ds  
\\ \notag 
&\leq \frac{\alpha}{8}\int_{0}^{t}\|\overline{\mu}(s)\|_{H}^2\,ds 
         + C_{3}\int_{0}^{t}\|\overline{\sigma}(s)\|_{H}^2\,ds, \\[4mm] \label{toto8}
&\int_{0}^{t}(\theta(s), \overline{\mu}(s) + \overline{\sigma}(s))_{H}\,ds 
-\int_{0}^{t} (\overline{\mu}(s), \overline{\varphi}(s))_{H}\,ds  
\\ \notag 
&= \alpha\int_{0}^{t}\|\overline{\mu}(s)\|_{H}^2\,ds  
    + \int_{0}^{t} (\overline{\mu}(s), \overline{\sigma}(s))_{H}\,ds 
    + \int_{0}^{t} (\overline{\sigma}(s), \theta(s))_{H}\,ds 
\\ \notag 
&\geq \alpha\int_{0}^{t}\|\overline{\mu}(s)\|_{H}^2\,ds  
    + \int_{0}^{t} (\overline{\mu}(s), \overline{\sigma}(s))_{H}\,ds 
    - \int_{0}^{t}\|\overline{\sigma}(s)\|_{V}\|\theta(s)\|_{V^{*}}\,ds  
\\ \notag 
&\geq \frac{\alpha}{2}\int_{0}^{t}\|\overline{\mu}(s)\|_{H}^2\,ds 
         - C_{4}\int_{0}^{t}\|\overline{\sigma}(s)\|_{H}^2\,ds   
         - \delta\int_{0}^{t}\|\overline{\sigma}(s)\|_{V}^2\,ds 
         - C_{5}\int_{0}^{t}\|\theta(s)\|_{V^{*}}^2\,ds. 
\end{align} 
Thus it follows from \eqref{toto4}-\eqref{toto8} that 
for all $\alpha>0$ and all $\delta>0$  
there exists a constant $C_{6}=C_{6}(\alpha, \delta)>0$ such that 
\begin{align*}
&\frac{1}{2}\|\theta(t)\|_{V^{*}}^2 
+ \frac{\alpha}{8}\int_{0}^{t}\|\overline{\mu}(s)\|_{H}^2\,ds 
+ (1-2\|\pi'\|_{L^{\infty}(\mathbb{R})}^2\alpha-2\delta)
                                       \int_{0}^{t}\|\overline{\varphi}(s)\|_{V}^2\,ds 
\\
&+ \frac{1}{2}\|\overline{\sigma}(t)\|_{H}^2 
  + (1-\delta)\int_{0}^{t}\|\overline{\sigma}(s)\|_{V}^2\,ds 
\\  
&\leq C_{6}\int_{0}^{t}\|\overline{\sigma}(s)\|_{H}^2\,ds 
         + C_{6}\int_{0}^{t}\|\theta(s)\|_{V^{*}}^2\,ds.  
\end{align*}
Therefore, choosing $\delta > 0$ and $\alpha>0$ small enough 
and applying the Gronwall lemma, 
we can prove that 
there exists $\alpha_{1}\in(0, 1)$ such that 
for all $\alpha\in(0, \alpha_{1})$, $\beta>0$ and $\ep\in(0, \ep_{1})$  
the weak solution of \ref{Pbep} is unique. 
\end{proof}
We can establish estimates of solutions for \ref{Pbep} 
in reference to 
\cite[4  A priori estimates]{CGH-2015} and 
\cite[3.\ Proofs of Theorems 2.2 and 2.3]{CGRS-2015}. 
%
\begin{lem}\label{estimatesolPbep1} 
Let $\ep_{1}$ be as in Lemma \ref{esPbeplam} 
and let $\alpha_{1}$ be as in Lemma \ref{uniquesolPbep}. 
Then there exists a constant $C=C(T)>0$ such that 
for all $\alpha \in (0, \alpha_{1})$, $\beta>0$ and $\ep \in (0, \ep_{1})$ 
the weak solution 
$(\mu_{\beta, \ep}, \varphi_{\beta, \ep}, 
                                                       \sigma_{\beta, \ep})$ 
of {\rm \ref{Pbep}}  
satisfies the inequality 
\begin{align*}
&\alpha^{1/2}\|\mu_{\beta, \ep}\|_{L^{\infty}(0, T; H)} 
       + \|\nabla \mu_{\beta, \ep}\|_{L^2(0, T; H)} 
       + \beta^{1/2}\|\partial_{t}\varphi_{\beta, \ep}\|_{L^2(0, T; H)} 
\\
     &+ \|\varphi_{\beta, \ep}\|_{L^{\infty}(0, T; V)} 
     + \|\sigma_{\beta, \ep}\|_{H^1(0, T; V^{*}) \cap L^{\infty}(0, T; H) 
                                                                             \cap L^2(0, T; V)}  
\\
&+ \|\partial_{t}(\alpha\mu_{\beta, \ep}+\varphi_{\beta, \ep})\|_{L^2(0, T; V^{*})} 
+ p^{1/2}\|\sigma_{\beta, \ep}-\mu_{\beta, \ep}\|_{L^2(0, T; H)}  
     \\ \notag 
     &\leq C\Bigl(\alpha^{1/2}\|\mu_0\|_{H} + \|\varphi_0\|_{V} 
                     + \|G(\varphi_0)\|_{L^1(\Omega)}^{1/2} 
                     + \|\sigma_0\|_{H} \Bigr).   
\end{align*}
\end{lem}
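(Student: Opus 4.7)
The plan is to derive the desired a priori bound via a standard energy identity obtained by testing each of the three equations \eqref{defsolPbep1}--\eqref{defsolPbep3} with a carefully chosen element of $V$ and combining them. Specifically, I would first test \eqref{defsolPbep1} with $v = \mu_{\beta,\ep}$ and substitute the expression from \eqref{defsolPbep2} into the term $(\partial_t \varphi_{\beta,\ep}, \mu_{\beta,\ep})_H$; this produces the nonnegative dissipation term $\beta\|\partial_t\varphi_{\beta,\ep}\|_H^2$ together with the time derivatives $\frac{1}{2}\frac{d}{dt}\|\varphi_{\beta,\ep}\|_V^2$ (since the operator on the right of \eqref{defsolPbep2} is $-\Delta+1$) and $\frac{d}{dt}\int_\Omega G_\ep(\varphi_{\beta,\ep})$. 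I would then test \eqref{defsolPbep3} with $v = \sigma_{\beta,\ep}$ and add the two identities, at which point the reaction terms combine into the single dissipation contribution $-p\|\sigma_{\beta,\ep}-\mu_{\beta,\ep}\|_H^2$. Integrating over $[0,t]$ and using the initial conditions \eqref{defsolPbep4} leads to the bound
\begin{align*}
&\frac{\alpha}{2}\|\mu_{\beta,\ep}(t)\|_H^2 + \beta\int_0^t\|\partial_t\varphi_{\beta,\ep}\|_H^2
+ \frac{1}{2}\|\varphi_{\beta,\ep}(t)\|_V^2 + \int_\Omega G_\ep(\varphi_{\beta,\ep}(t)) \\
&\quad+ \int_0^t\|\nabla\mu_{\beta,\ep}\|_H^2 + \frac{1}{2}\|\sigma_{\beta,\ep}(t)\|_H^2 + \int_0^t\|\nabla\sigma_{\beta,\ep}\|_H^2 + p\int_0^t\|\sigma_{\beta,\ep}-\mu_{\beta,\ep}\|_H^2 \\
&\leq \frac{\alpha}{2}\|\mu_{0\ep}\|_H^2 + \frac{1}{2}\|\varphi_0\|_V^2 + \int_\Omega G_\ep(\varphi_0) + \frac{1}{2}\|\sigma_{0\ep}\|_H^2.
\end{align*}

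The next step is to ensure both sides match the quantities appearing in the conclusion, uniformly in $\alpha,\beta,\ep$. On the right, \eqref{tool2} of Lemma \ref{ra} gives $\|\mu_{0\ep}\|_H \leq \|\mu_0\|_H$ and $\|\sigma_{0\ep}\|_H \leq \|\sigma_0\|_H$, while $\widehat{B_\ep}\leq\widehat{B}$ from Remark \ref{MYreg} yields $\int_\Omega G_\ep(\varphi_0)\leq \|G(\varphi_0)\|_{L^1(\Omega)}$. On the left, coercivity of the potential term follows exactly as in the proof of Lemma \ref{esPbeplam}: by Lemma \ref{ineqG} and (C4), for $\ep\in(0,\ep_1)$ one has $\frac{1}{2}\|\varphi_{\beta,\ep}\|_V^2 + \int_\Omega G_\ep(\varphi_{\beta,\ep}) \geq \frac{1}{2}\|\nabla\varphi_{\beta,\ep}\|_H^2 + \frac{1}{4}(1-\|\pi'\|_{L^\infty(\mathbb{R})})\|\varphi_{\beta,\ep}\|_H^2$, which controls $\|\varphi_{\beta,\ep}\|_V^2$ from below up to a positive constant depending only on $\|\pi'\|_{L^\infty(\mathbb{R})}$. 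At this point I obtain bounds for $\alpha^{1/2}\|\mu_{\beta,\ep}\|_{L^\infty(0,T;H)}$, $\|\nabla\mu_{\beta,\ep}\|_{L^2(0,T;H)}$, $\beta^{1/2}\|\partial_t\varphi_{\beta,\ep}\|_{L^2(0,T;H)}$, $\|\varphi_{\beta,\ep}\|_{L^\infty(0,T;V)}$, $\|\sigma_{\beta,\ep}\|_{L^\infty(0,T;H)}$, $\|\nabla\sigma_{\beta,\ep}\|_{L^2(0,T;H)}$ and $p^{1/2}\|\sigma_{\beta,\ep}-\mu_{\beta,\ep}\|_{L^2(0,T;H)}$; the $L^2(0,T;V)$-bound for $\sigma_{\beta,\ep}$ then comes from combining the $L^\infty(0,T;H)$- and $\|\nabla\sigma_{\beta,\ep}\|_{L^2(0,T;H)}$-estimates with the $T^{1/2}$-factor.

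The two remaining $V^*$-estimates are extracted directly from the equations. Testing \eqref{defsolPbep1} against an arbitrary $v\in V$ yields $\langle\partial_t(\alpha\mu_{\beta,\ep}+\varphi_{\beta,\ep}),v\rangle_{V^*,V} = -(\nabla\mu_{\beta,\ep},\nabla v)_H + p(\sigma_{\beta,\ep}-\mu_{\beta,\ep},v)_H$, whence $\|\partial_t(\alpha\mu_{\beta,\ep}+\varphi_{\beta,\ep})\|_{V^*}\leq \|\nabla\mu_{\beta,\ep}\|_H + p\|\sigma_{\beta,\ep}-\mu_{\beta,\ep}\|_H$, and an identical computation starting from \eqref{defsolPbep3} gives the $L^2(0,T;V^*)$-bound for $\partial_t\sigma_{\beta,\ep}$. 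I do not expect a conceptual obstacle: the whole argument is a standard energy estimate in the spirit of \cite{CGH-2015,CGRS-2015}, and the presence of the coercive zeroth-order term in $(-\Delta+1)\varphi$ bypasses the Poincaré--Wirtinger difficulty that would otherwise arise on unbounded domains. The only point requiring attention is to verify that every constant remains independent of $\alpha$, $\beta$ and $\ep$, which is automatic here because each equation is tested with its natural dissipating test function and no inequality that fails in the unbounded setting is used.
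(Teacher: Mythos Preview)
Your proposal is correct and follows essentially the same approach as the paper: test \eqref{defsolPbep1} with $\mu_{\beta,\ep}$, use \eqref{defsolPbep2} to expand $(\partial_t\varphi_{\beta,\ep},\mu_{\beta,\ep})_H$, test \eqref{defsolPbep3} with $\sigma_{\beta,\ep}$, add, integrate, and then handle the potential term via Lemma~\ref{ineqG}/(C4) and the initial data via \eqref{tool2} and Remark~\ref{MYreg}, exactly as in \eqref{coco1}--\eqref{coco3}. The only cosmetic difference is that the paper bounds $\|\partial_t(\alpha\mu_{\beta,\ep}+\varphi_{\beta,\ep})\|_{L^2(0,T;V^*)}$ by first estimating $\|\partial_t\sigma_{\beta,\ep}\|_{L^2(0,T;V^*)}$ and then using the summed equation (see \eqref{coco5}--\eqref{coco6}), whereas you read it off directly from \eqref{defsolPbep1}; both are immediate.
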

\begin{proof}
Let $\alpha \in (0, \alpha_{1})$, let $\beta>0$ and let $\ep \in (0, \ep_{1})$.  
Then we see from \eqref{defsolPbep1}-\eqref{defsolPbep3} that 
\begin{align*}
&\frac{\alpha}{2}\frac{d}{dt}\|\mu_{\beta, \ep}(t)\|_{H}^2 
+ (\partial_{t}\varphi_{\beta, \ep}(t), \mu_{\beta, \ep}(t))_{H} 
+ \|\nabla \mu_{\beta, \ep}(t)\|_{H}^2 
= (R_{\beta, \ep}(t), \mu_{\beta, \ep}(t))_{H},  \\ 
&(\partial_{t}\varphi_{\beta, \ep}(t), \mu_{\beta, \ep}(t))_{H} 
= \beta\|\partial_{t}\varphi_{\beta, \ep}(t)\|_{H}^2 
   + \frac{1}{2}\frac{d}{dt}\|\varphi_{\beta, \ep}(t)\|_{V}^2 
   + \frac{d}{dt}\int_{\Omega} G_{\ep}(\varphi_{\beta, \ep}(t)), \\    
&\frac{1}{2}\frac{d}{dt}\|\sigma_{\beta, \ep}(t)\|_{H}^2 
+ \|\nabla\sigma_{\beta, \ep}(t)\|_{H}^2 
= - (R_{\beta, \ep}(t), \sigma_{\beta, \ep}(t))_{H},   
\end{align*}
where $R_{\beta, \ep}:=p(\sigma_{\beta, \ep}-\mu_{\beta, \ep})$,   
and hence \eqref{tool2} and Remark \ref{MYreg} yield that 
\begin{align}\label{coco1}
&\frac{\alpha}{2}\|\mu_{\beta, \ep}(t)\|_{H}^2 
+ \int_{0}^{t}\|\nabla \mu_{\beta, \ep}(s)\|_{H}^2\,ds  \\ \notag
&+ \beta\int_{0}^{t}\|\partial_{t}\varphi_{\beta, \ep}(s)\|_{H}^2\,ds  
+ \frac{1}{2}\|\varphi_{\beta, \ep}(t)\|_{V}^2 
+ \int_{\Omega} G_{\ep}(\varphi_{\beta, \ep}(t)) \\ \notag 
&+ \frac{1}{2}\|\sigma_{\beta, \ep}(t)\|_{H}^2 
+ \int_{0}^{t}\|\nabla \sigma_{\beta, \ep}(s)\|_{H}^2\,ds  
+ p\int_{0}^{t}\|\sigma_{\beta, \ep}(s)-\mu_{\beta, \ep}(s)\|_{H}^2\,ds  \\ \notag
&\leq C_{1}\Bigl(\alpha\|\mu_0\|_{H}^2 + \|\varphi_0\|_{V}^2 
                     + \|G(\varphi_0)\|_{L^1(\Omega)} 
                     + \|\sigma_0\|_{H}^2 \Bigr)   
\end{align} 
with some constant $C_{1}>0$. 
Here we derive from \eqref{popo6} that  
\begin{align}\label{coco2}
&\frac{1}{2}\|\varphi_{\beta, \ep}(t)\|_{V}^2 
+ \int_{\Omega}G_{\ep}(\varphi_{\beta, \ep}(t)) 
\\ \notag 
&\geq \frac{1}{2}\|\nabla\varphi_{\beta, \ep}(t)\|_{H}^2 
       + \frac{1}{4}(1-\|\pi'\|_{L^{\infty}(\mathbb{R})})
                                                             \|\varphi_{\beta, \ep}(t)\|_{H}^2 
\\ \notag 
&\geq \frac{1}{4}(1-\|\pi'\|_{L^{\infty}(\mathbb{R})})\|\varphi_{\beta, \ep}(t)\|_{V}^2. 
\end{align} 
Hence, by virtue of \eqref{coco1} and \eqref{coco2}, we can prove that 
\begin{align}\label{coco3}
&\frac{\alpha}{2}\|\mu_{\beta, \ep}(t)\|_{H}^2 
+ \int_{0}^{t}\|\nabla \mu_{\beta, \ep}(s)\|_{H}^2\,ds  \\ \notag
&+ \beta\int_{0}^{t}\|\partial_{t}\varphi_{\beta, \ep}(s)\|_{H}^2\,ds  
+ \frac{1}{4}(1-\|\pi'\|_{L^{\infty}(\mathbb{R})})\|\varphi_{\beta, \ep}(t)\|_{V}^2 
\\ \notag 
&+ \frac{1}{2}\|\sigma_{\beta, \ep}(t)\|_{H}^2 
+ \int_{0}^{t}\|\nabla \sigma_{\beta, \ep}(s)\|_{H}^2\,ds  
+ p\int_{0}^{t}\|\sigma_{\beta, \ep}(s)-\mu_{\beta, \ep}(s)\|_{H}^2\,ds   \\ \notag
&\leq C_{1}\Bigl(\alpha\|\mu_0\|_{H}^2 + \|\varphi_0\|_{V}^2 
                     + \|G(\varphi_0)\|_{L^1(\Omega)} 
                     + \|\sigma_0\|_{H}^2 \Bigr).    
\end{align}
On the other hand, 
from \eqref{defsolPbep3} we have 
\begin{align}\label{coco5}
&\left|\int_{0}^{T} 
         \langle \partial_{t}\sigma_{\beta, \ep}(t), v(t) \rangle_{V^{*}, V}\,dt \right| 
\\ \notag 
&\leq (\|\nabla\sigma_{\beta, \ep}\|_{L^2(0, T; H)} + 
          p\|\sigma_{\beta, \ep}-\mu_{\beta, \ep}\|_{L^2(0, T; H)})
                                                                                  \|v\|_{L^2(0, T; V)} 
\end{align}
for all $v \in L^2(0, T; V)$ and  
we infer from 
\eqref{defsolPbep1} and \eqref{defsolPbep3} that 
\begin{align}\label{coco6}
&\left|\int_{0}^{T} 
         \langle \partial_{t}(\alpha\mu_{\beta, \ep}+\varphi_{\beta, \ep})(t), 
                                                                    v(t) \rangle_{V^{*}, V}\,dt \right| 
\\ \notag
&\leq \left|\int_{0}^{T} 
         \langle \partial_{t}\sigma_{\beta, \ep}(t), v(t) \rangle_{V^{*}, V}\,dt \right| 
      + \|\nabla(\mu_{\beta, \ep}+\sigma_{\beta, \ep})\|_{L^2(0, T; H)}
                                                                             \|v\|_{L^2(0, T; V)} 
\end{align} 
for all $v \in L^2(0, T; V)$.  
Thus, 
combining \eqref{coco3}-\eqref{coco6},   
we can obtain this lemma. 
\end{proof}
\begin{lem}\label{estimatesolPbep2} 
Let $\ep_{1}$ be as in Lemma \ref{esPbeplam} 
and let $\alpha_{1}$ be as in Lemma \ref{uniquesolPbep}. 
Then there exists a constant $C=C(T)>0$ such that 
for all $\alpha \in (0, \alpha_{1})$, $\beta \in (0, 1)$ and $\ep \in (0, \ep_{1})$ 
the weak solution 
$(\mu_{\beta, \ep}, \varphi_{\beta, \ep}, 
                                                       \sigma_{\beta, \ep})$ 
of {\rm \ref{Pbep}}  
satisfies the inequality 
\begin{align*}
&\|\mu_{\beta, \ep}\|_{L^2(0, T; V)} 
     + \|\varphi_{\beta, \ep}\|_{L^2(0, T; W)}  
     + \|B_{\ep}(\varphi_{\beta, \ep})\|_{L^2(0, T; H)} 
     \\ \notag 
&\leq C\Bigl(\alpha^{1/2}\|\mu_0\|_{H} + \|\varphi_0\|_{V} 
                     + \|G(\varphi_0)\|_{L^1(\Omega)}^{1/2} 
                     + \|\sigma_0\|_{H} + \|\mu_{\beta, \ep}\|_{L^2(0, T; H)}\Bigr).   
\end{align*}
\end{lem}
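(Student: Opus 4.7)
The plan is to test the second equation \eqref{defsolPbep2} with $B_\ep(\varphi_{\beta,\ep})$ in $H$ and integrate over $(0,t)$, following the strategy of \cite{CGH-2015, CGRS-2015}. The four resulting contributions behave well: the $\beta\partial_t\varphi_{\beta,\ep}$ term is a total $t$-derivative of $\beta\int_\Omega \widehat{B_\ep}(\varphi_{\beta,\ep})$ with $\widehat{B_\ep}\geq 0$; the Laplacian contribution integrates by parts to $\int_0^t\int_\Omega B_\ep'(\varphi_{\beta,\ep})|\nabla\varphi_{\beta,\ep}|^2\geq 0$, using $\partial_\nu\varphi_{\beta,\ep}=0$ on the bounded boundary $\partial\Omega$; and $\int_\Omega \varphi_{\beta,\ep}B_\ep(\varphi_{\beta,\ep})\geq 0$ by monotonicity of $B_\ep$ together with $B_\ep(0)=0$ (which holds since $\widehat{B}(0)=0$ forces $0\in B(0)$). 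Dropping these nonnegative quantities leaves
\begin{equation*}
\|B_\ep(\varphi_{\beta,\ep})\|_{L^2(0,T;H)}^2 \leq \bigl(\mu_{\beta,\ep},B_\ep(\varphi_{\beta,\ep})\bigr)_{L^2(0,T;H)} - \bigl(\pi(\varphi_{\beta,\ep}),B_\ep(\varphi_{\beta,\ep})\bigr)_{L^2(0,T;H)} + \beta\|\widehat{B_\ep}(\varphi_0)\|_{L^1(\Omega)},
\end{equation*}
and Young's inequality absorbs half of $\|B_\ep(\varphi_{\beta,\ep})\|_{L^2(0,T;H)}^2$ into the left, leaving contributions controlled by $\|\mu_{\beta,\ep}\|_{L^2(0,T;H)}^2$ and $\|\pi'\|_{L^\infty(\mathbb{R})}^2\|\varphi_{\beta,\ep}\|_{L^2(0,T;H)}^2$.

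To handle the initial term I would apply Remark \ref{MYreg} ($\widehat{B_\ep}\leq\widehat{B}$) together with the decomposition $\widehat{B}=G-\widehat{\pi}$ and the pointwise bound $|\widehat{\pi}(r)|\leq \tfrac{1}{2}\|\pi'\|_{L^\infty(\mathbb{R})}r^2$, which follows from $\pi(0)=\widehat{\pi}(0)=0$ in (C2). This gives
\begin{equation*}
\|\widehat{B_\ep}(\varphi_0)\|_{L^1(\Omega)}\leq \|G(\varphi_0)\|_{L^1(\Omega)}+\tfrac{1}{2}\|\pi'\|_{L^\infty(\mathbb{R})}\|\varphi_0\|_H^2.
\end{equation*}
Combined with the $\|\varphi_{\beta,\ep}\|_{L^\infty(0,T;V)}$-bound from Lemma \ref{estimatesolPbep1} (which dominates $\|\varphi_{\beta,\ep}\|_{L^2(0,T;H)}$), this produces the claimed bound on $\|B_\ep(\varphi_{\beta,\ep})\|_{L^2(0,T;H)}$ in terms of the data and $\|\mu_{\beta,\ep}\|_{L^2(0,T;H)}$.

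Once $B_\ep(\varphi_{\beta,\ep})$ is under control I would solve \eqref{defsolPbep2} for $-\Delta\varphi_{\beta,\ep}$ and take $L^2(0,T;H)$-norms:
\begin{equation*}
\|{-\Delta\varphi_{\beta,\ep}}\|_{L^2(0,T;H)} \leq \|\mu_{\beta,\ep}\|_{L^2(0,T;H)} + \beta\|\partial_t\varphi_{\beta,\ep}\|_{L^2(0,T;H)} + \|\varphi_{\beta,\ep}\|_{L^2(0,T;H)} + \|B_\ep(\varphi_{\beta,\ep})\|_{L^2(0,T;H)} + \|\pi(\varphi_{\beta,\ep})\|_{L^2(0,T;H)}.
\end{equation*}
The second summand is handled by writing $\beta\|\partial_t\varphi_{\beta,\ep}\|_{L^2(0,T;H)}= \beta^{1/2}\cdot\beta^{1/2}\|\partial_t\varphi_{\beta,\ep}\|_{L^2(0,T;H)}$ and using the $\beta$-uniform bound on $\beta^{1/2}\|\partial_t\varphi_{\beta,\ep}\|_{L^2(0,T;H)}$ from Lemma \ref{estimatesolPbep1} together with $\beta<1$. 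Standard elliptic regularity for the homogeneous Neumann Laplacian then upgrades this estimate to the $L^2(0,T;W)$-bound on $\varphi_{\beta,\ep}$, and the $L^2(0,T;V)$-bound on $\mu_{\beta,\ep}$ follows by combining $\|\nabla\mu_{\beta,\ep}\|_{L^2(0,T;H)}$ from Lemma \ref{estimatesolPbep1} with $\|\mu_{\beta,\ep}\|_{L^2(0,T;H)}$ already present on the right-hand side.

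The principal technical point is justifying the integration by parts $\int_\Omega(-\Delta\varphi_{\beta,\ep})B_\ep(\varphi_{\beta,\ep}) = \int_\Omega B_\ep'(\varphi_{\beta,\ep})|\nabla\varphi_{\beta,\ep}|^2$ on a possibly unbounded $\Omega$. Because $\varphi_{\beta,\ep}(t)\in W$ for a.e.\ $t$, $B_\ep$ is Lipschitz with $B_\ep(0)=0$, and $\partial_\nu\varphi_{\beta,\ep}=0$ on the (bounded, smooth) boundary $\partial\Omega$, the chain rule gives $B_\ep(\varphi_{\beta,\ep})\in V$ with $\nabla B_\ep(\varphi_{\beta,\ep})=B_\ep'(\varphi_{\beta,\ep})\nabla\varphi_{\beta,\ep}\in L^2(\Omega)^N$, so the identity follows from the Green formula; notably, no compactness ingredient is needed for this lemma, which is why the argument transfers identically from bounded to unbounded $\Omega$.
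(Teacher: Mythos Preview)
Your argument is correct but differs from the paper's in the order of operations and in how the $\beta\partial_t\varphi_{\beta,\ep}$ term is handled. The paper tests \eqref{defsolPbep2} with $(-\Delta+1)\varphi_{\beta,\ep}$ rather than with $B_\ep(\varphi_{\beta,\ep})$: it uses the \emph{same} sign condition $((-\Delta+1)\varphi_{\beta,\ep},B_\ep(\varphi_{\beta,\ep}))_H\geq0$ to drop the $B_\ep$-term, and then simply estimates $((-\Delta+1)\varphi_{\beta,\ep},\beta\partial_t\varphi_{\beta,\ep})_H$ by Young's inequality and the bound on $\beta^{1/2}\|\partial_t\varphi_{\beta,\ep}\|_{L^2(0,T;H)}$ from Lemma~\ref{estimatesolPbep1}. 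This yields the $L^2(0,T;W)$-bound for $\varphi_{\beta,\ep}$ first, after which $B_\ep(\varphi_{\beta,\ep})$ is read off from the equation by the triangle inequality. Your route instead treats $\beta\partial_t\varphi_{\beta,\ep}\cdot B_\ep(\varphi_{\beta,\ep})$ as a total time derivative of $\beta\int_\Omega\widehat{B_\ep}(\varphi_{\beta,\ep})$, which is correct but generates the extra initial term $\beta\|\widehat{B_\ep}(\varphi_0)\|_{L^1(\Omega)}$ that you then have to bound via (C2)--(C3); the paper's version sidesteps this by never needing a time-integrated identity at that stage. Both approaches hinge on the same integration-by-parts inequality on the possibly unbounded $\Omega$, so the ``principal technical point'' you flag is identical in either case, and your justification via the chain rule for the Lipschitz map $B_\ep$ and the Green formula for $\varphi_{\beta,\ep}(t)\in W$ applies equally to both.
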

\begin{proof}
Let $\alpha \in (0, \alpha_{1})$, let $\beta \in (0, 1)$ and let $\ep \in (0, \ep_{1})$.  
Then it follows from \eqref{defsolPbep2} and the Young inequality that 
\begin{align*}
&\|(-\Delta+1)\varphi_{\beta, \ep}(t)\|_{H}^2 
\\ 
&= ((-\Delta+1)\varphi_{\beta, \ep}(t), 
           \mu_{\beta, \ep}(t)-\beta\partial_{t}\varphi_{\beta, \ep}(t)
                        -B_{\ep}(\varphi_{\beta, \ep}(t))-\pi(\varphi_{\beta, \ep}(t)))_{H} 
\\
&\leq ((-\Delta+1)\varphi_{\beta, \ep}(t), 
           \mu_{\beta, \ep}(t)-\beta\partial_{t}\varphi_{\beta, \ep}(t)
                                                      -\pi(\varphi_{\beta, \ep}(t)))_{H} 
\\ 
&\leq \frac{1}{2}\|(-\Delta+1)\varphi_{\beta, \ep}(t)\|_{H}^2 
        + \frac{3}{2}(\|\mu_{\beta, \ep}(t)\|_{H}^2 
                    + \beta\|\partial_{t}\varphi_{\beta, \ep}(t)\|_{H}^2 
                    + \|\pi'\|_{L^{\infty}(\mathbb{R})}^2\|\varphi_{\beta, \ep}(t)\|_{H}^2). 
\end{align*}
Hence, letting $C_{1}$ be a positive constant appearing in the elliptic estimate 
$\|w\|_{W} \leq C_{1}\|(-\Delta+1)w\|_{H}$ for all $w \in W$, 
we have   
\begin{align*}
\|\varphi_{\beta, \ep}\|_{L^2(0, T; W)}^2 
\leq 3C_{1}^2(\|\mu_{\beta, \ep}\|_{L^2(0, T; H)}^2  
                    + \beta\|\partial_{t}\varphi_{\beta, \ep}\|_{L^2(0, T; H)}^2 
                    + \|\pi'\|_{L^{\infty}(\mathbb{R})}^2
                                                   \|\varphi_{\beta, \ep}\|_{L^2(0, T; H)}^2). 
\end{align*}
On the other hand, we see from \eqref{defsolPbep2} and the Young inequality 
that 
\begin{align*}
\|B_{\ep}(\varphi_{\beta, \ep})\|_{L^2(0, T; H)}  
&\leq \|\mu_{\beta, \ep}\|_{L^2(0, T; H)} 
       + \beta^{1/2}\|\partial_{t}\varphi_{\beta, \ep}\|_{L^2(0, T; H)} 
\\ 
       &\,\quad+ \|(-\Delta+1)\varphi_{\beta, \ep}\|_{L^2(0, T; H)} 
       + \|\pi'\|_{L^{\infty}(\mathbb{R})}\|\varphi_{\beta, \ep}\|_{L^2(0, T; H)}. 
\end{align*}
Therefore we can prove this lemma by Lemma \ref{estimatesolPbep1}. 
\end{proof}
\begin{prth1.1}
Let $\alpha_{1}$ be as in Lemma \ref{uniquesolPbep}, 
let $\alpha \in (0, \alpha_{1})$ and let $\beta \in (0, 1)$. 
Then, by Lemmas \ref{estimatesolPbep1} and \ref{estimatesolPbep2}, 
there exist some functions 
\begin{align*}
&\mu_{\beta} \in H^1(0, T; V^{*}) \cap L^{\infty}(0, T; H) \cap L^2(0, T; V), 
\\
&\varphi_{\beta} \in H^1(0, T; H) \cap L^{\infty}(0, T; V) \cap L^2(0, T; W),
\\ 
&\sigma_{\beta} \in H^1(0, T; V^{*}) \cap L^{\infty}(0, T; H) \cap L^2(0, T; V), 
\\ 
&\xi_{\beta} \in L^2(0, T; H) 
\end{align*}
such that 
\begin{align}
&\mu_{\beta, \ep} \to \mu_{\beta}
\quad \mbox{weakly$^{*}$ in}\ L^{\infty}(0, T; H)\ 
\mbox{and weakly$^{*}$ in}\ L^2(0, T; V), 
\label{ume1}\\ \notag 
&\varphi_{\beta, \ep} \to \varphi_{\beta}
\quad \mbox{weakly$^{*}$ in}\ L^{\infty}(0, T; V), 
\\ 
&\sigma_{\beta, \ep} \to \sigma_{\beta}
\quad \mbox{weakly$^{*}$ in}\ L^{\infty}(0, T; H)\ 
\mbox{and weakly$^{*}$ in}\ L^2(0, T; V), 
\label{ume3}\\
&\partial_{t}\mu_{\beta, \ep} \to 
(\mu_{\beta})_{t}  
\quad \mbox{weakly in}\ L^2(0, T; V^{*}), 
\label{ume4}\\
&\partial_{t}\varphi_{\beta, \ep} \to 
\partial_{t}\varphi_{\beta}  
\quad \mbox{weakly in}\ L^2(0, T; H), 
\label{ume5}\\
&\partial_{t}\sigma_{\beta, \ep} \to 
(\sigma_{\beta})_{t}  
\quad \mbox{weakly in}\ L^2(0, T; V^{*}), 
\label{ume6}\\
&B_{\ep}(\varphi_{\beta, \ep})  \to \xi_{\beta} 
\quad \mbox{weakly in}\ L^2(0, T; H), 
\label{ume7}\\
&(-\Delta+1)\varphi_{\beta, \ep}  \to 
(-\Delta+1)\varphi_{\beta}  \quad \mbox{weakly in}\ L^2(0, T; H) 
\label{ume8}
\end{align}
as $\ep=\ep_{j}\searrow0$. 
From \eqref{defsolPbep1}, \eqref{defsolPbep3}, \eqref{ume1}-\eqref{ume6} 
we can obtain \eqref{defsolPbe1} and \eqref{defsolPbe3}. 
Also, 
from \eqref{ume1}, \eqref{ume5}, \eqref{ume7}, \eqref{ume8}, 
Lemmas \ref{keylemma2}, \ref{estimatesolPbep1} and \ref{estimatesolPbep2}   
we can prove that 
\begin{align}\label{semidefsolPbe2}
\mu_{\beta} 
= \beta\partial_{t}\varphi_{\beta} + (-\Delta+1)\varphi_{\beta} 
   + \xi_{\beta} + \pi(\varphi_{\beta}) 
\end{align}
in a similar way to the proof of Lemma \ref{solPbep}.   
Now we show \eqref{defsolPbe2} by proving that 
\begin{align}\label{xibetainB(varphibeta)}
\xi_{\beta} \in B(\varphi_{\beta})  \quad \mbox{a.e.\ on}\ \Omega\times(0, T). 
\end{align}
Let $E \subset \Omega$ be 
an arbitrary bounded domain with smooth boundary.  
Then from Lemmas \ref{keylemma2}, \ref{estimatesolPbep1} 
and \ref{estimatesolPbep2}  
we have 
\begin{align}\label{tuyoikyokusyo}
1_{E}\varphi_{\beta, \ep} \to 
1_{E}\varphi_{\beta} 
\quad \mbox{in}\ L^2(0, T; H)
\end{align}
as $\ep=\ep_{j}\searrow0$, 
where $1_{E}$ is the characteristic function of $E$.    
It follows from \eqref{ume7} and \eqref{tuyoikyokusyo} that 
\begin{align*}
\int_{0}^{T}(B_{\ep}(1_{E}\varphi_{\beta, \ep}(t)), 
                                      1_{E}\varphi_{\beta, \ep}(t))_{H}\,dt 
&= \int_{0}^{T}(B_{\ep}(\varphi_{\beta, \ep}(t)), 
                                      1_{E}\varphi_{\beta, \ep}(t))_{H}\,dt 
\\ 
&\to \int_{0}^{T}(\xi_{\beta}(t), 
                                      1_{E}\varphi_{\beta}(t))_{H}\,dt
\end{align*}
as $\ep=\ep_{j}\searrow0$, 
and hence it holds that 
\begin{align*}
\xi_{\beta} \in B(1_{E}\varphi_{\beta}) 
\quad \mbox{a.e.\ on}\ \Omega\times(0, T)  
\end{align*} 
(see \cite[Lemma 1.3, p.\ 42]{Barbu1}). 
In particular, we see that 
\begin{align*}
\xi_{\beta} \in B(1_{E}\varphi_{\beta}) 
= B(\varphi_{\beta}) 
\quad \mbox{a.e.\ on}\ E\times(0, T).   
\end{align*}
Thus, since $E \subset \Omega$ is arbitrary, 
we can obtain \eqref{xibetainB(varphibeta)}. 
Hence combining \eqref{semidefsolPbe2} and \eqref{xibetainB(varphibeta)}  
leads to \eqref{defsolPbe2}.   

Next we prove \eqref{defsolPbe4}. 
Let $E \subset \Omega$ be 
an arbitrary bounded domain with smooth boundary.  
Then from Lemmas \ref{keylemmaVstar} and \ref{estimatesolPbep1} we have 
\begin{equation}\label{itiyoutilde}
\tilde{J}^{1/2}_{1}\mu_{\beta, \ep} \to 
\tilde{J}^{1/2}_{1}\mu_{\beta} 
\quad \mbox{in}\ C([0, T]; L^2(E))
\end{equation}
as $\ep = \ep_j \searrow0$. 
Thus,   
since $\mu_{\beta, \ep}(0)=\mu_{0\ep} \to \mu_{0}$ in $H$ as $\ep\searrow0$, 
\eqref{itiyoutilde} yields that 
$$
\tilde{J}^{1/2}_{1}\mu_{\beta}(0)=\tilde{J}^{1/2}_{1}\mu_{0} 
\quad \mbox{a.e.\ in}\ E.
$$
Because $E \subset \Omega$ is arbitrary, we conclude that 
$$
\tilde{J}^{1/2}_{1}\mu_{\beta}(0)=\tilde{J}^{1/2}_{1}\mu_{0} 
\quad \mbox{a.e.\ in}\ \Omega.
$$ 
Since $\tilde{J}^{1/2}_{1}\mu_{0} \in H$, we see that 
$$
\tilde{J}^{1/2}_{1}\mu_{\beta}(0)=\tilde{J}^{1/2}_{1}\mu_{0} 
\quad \mbox{in}\ H, 
$$
that is,  
$$
\mu_{\beta}(0)=\mu_{0} 
\quad \mbox{in}\ V^{*}. 
$$ 
Recalling that $\mu_{0} \in H$, we have  
$$
\mu_{\beta}(0)=\mu_{0} 
\quad \mbox{in}\ H. 
$$
Similarly, we can prove that 
$$
\varphi_{\beta}(0)=\varphi_{0}, \quad 
\sigma_{\beta}(0)=\sigma_{0}  
\qquad \mbox{in}\ H. 
$$
Therefore \eqref{defsolPbe4} holds. 

We can show \eqref{betaes1} in a similar way to 
the proof of Lemma \ref{estimatesolPbep1}. 
Indeed, we infer from 
\eqref{defsolPbe1}-\eqref{defsolPbe4} and (C4) that 
there exists a constant $C_{1}>0$ such that 
\begin{align*}
&\frac{\alpha}{2}\|\mu_{\beta}(t)\|_{H}^2 
+ \int_{0}^{t}\|\nabla \mu_{\beta}(s)\|_{H}^2\,ds  \\ 
&+ \beta\int_{0}^{t}\|\partial_{t}\varphi_{\beta}(s)\|_{H}^2\,ds  
+ \frac{1}{2}\|\varphi_{\beta}(t)\|_{V}^2 
+ \int_{\Omega} G(\varphi_{\beta}(t)) \\ 
&+ \frac{1}{2}\|\sigma_{\beta}(t)\|_{H}^2 
+ \int_{0}^{t}\|\nabla \sigma_{\beta}(s)\|_{H}^2\,ds  
+ p\int_{0}^{t}\|\sigma_{\beta}(s)-\mu_{\beta}(s)\|_{H}^2\,ds  \\ 
&\leq C_{1}\Bigl(\alpha\|\mu_0\|_{H}^2 + \|\varphi_0\|_{V}^2 
                     + \|G(\varphi_0)\|_{L^1(\Omega)} 
                     + \|\sigma_0\|_{H}^2 \Bigr),  \\[4mm]
&\frac{1}{2}\|\varphi_{\beta}(t)\|_{V}^2 
+ \int_{\Omega}G(\varphi_{\beta}(t)) 
\\ \notag 
&\geq \frac{1}{2}\|\varphi_{\beta}(t)\|_{V}^2 
         - \frac{\|\pi'\|_{L^{\infty}(\mathbb{R})}}{2}\|\varphi_{\beta}(t)\|_{H}^2 
\\ \notag 
&\geq \frac{1}{2}
               (1-\|\pi'\|_{L^{\infty}(\mathbb{R})})\|\varphi_{\beta}(t)\|_{V}^2, \\[4mm]
&\left|\int_{0}^{T} 
         \langle (\sigma_{\beta})_{t}(t), v(t) \rangle_{V^{*}, V}\,dt \right| \\ 
&\leq (\|\nabla\sigma_{\beta}\|_{L^2(0, T; H)} 
               + p\|\sigma_{\beta}-\mu_{\beta}\|_{L^2(0, T; H)})
                                                                                  \|v\|_{L^2(0, T; V)}, 
\\[4mm] 
&\left|\int_{0}^{T} 
         \langle (\alpha\mu_{\beta}+\varphi_{\beta})_{t}(t), 
                                                                    v(t) \rangle_{V^{*}, V}\,dt \right| 
\\ \notag
&\leq \left|\int_{0}^{T} 
         \langle (\sigma_{\beta})_{t}(t), v(t) \rangle_{V^{*}, V}\,dt \right| 
      + \|\nabla(\mu_{\beta}+\sigma_{\beta})\|_{L^2(0, T; H)}
                                                                             \|v\|_{L^2(0, T; V)} 
\end{align*}
for all $v \in L^2(0, T; V)$, 
and hence \eqref{betaes1} holds.  
  
We can prove \eqref{betaes2} 
in the same way as in the proofs of 
Lemma \ref{estimatesolPbep2}. 
Indeed, we see  
from \eqref{defsolPbe2}, the Young inequality and the elliptic estimate 
that 
\begin{align*}
&\|\varphi_{\beta}\|_{L^2(0, T; W)}^2 
\leq C_{2}(\|\mu_{\beta}\|_{L^2(0, T; H)}^2  
                    + \beta\|\partial_{t}\varphi_{\beta}\|_{L^2(0, T; H)}^2 
                    + \|\pi'\|_{L^{\infty}(\mathbb{R})}^2
                                                   \|\varphi_{\beta}\|_{L^2(0, T; H)}^2), 
\\[2mm] 
&\|\xi_{\beta}\|_{L^2(0, T; H)}  
\leq \|\mu_{\beta}\|_{L^2(0, T; H)} 
       + \beta^{1/2}\|\partial_{t}\varphi_{\beta}\|_{L^2(0, T; H)} 
\\ 
       &\hspace{27mm}+ \|(-\Delta+1)\varphi_{\beta}\|_{L^2(0, T; H)} 
       + \|\pi'\|_{L^{\infty}(\mathbb{R})}\|\varphi_{\beta}\|_{L^2(0, T; H)} 
\end{align*}
with some constant $C_{2}>0$.  
Hence we can obtain \eqref{betaes2} by \eqref{betaes1}.       

We let $(\mu_{j}, \varphi_{j}, \sigma_{j})$, $j=1, 2$, 
be two weak solutions of \ref{Pbeta}, 
and 
put $\overline{\mu}:=\mu_{1}-\mu_{2}, 
\overline{\varphi}:=\varphi_{1}-\varphi_{2}, 
\overline{\sigma}:=\sigma_{1}-\sigma_{2}, 
\overline{\xi}:=\xi_{1}-\xi_{2}, 
\theta:=\alpha\overline{\mu}+\overline{\varphi}+\overline{\sigma}$.  
Then 
we derive from \eqref{defsolPbe2} that 
\begin{align*}
&\int_{0}^{t}\|\overline{\varphi}(s)\|_{V}^2\,ds 
+ \int_{0}^{t}(\overline{\xi}(s), \overline{\varphi}(s))_{H}\,ds 
-\int_{0}^{t} (\overline{\mu}(s), \overline{\varphi}(s))_{H}\,ds 
\\  
&= -\frac{\beta}{2}\|\overline{\varphi}(t)\|_{H}^2 
    -\int_{0}^{t}(\pi(\varphi_{1}(s))-\pi(\varphi_{2}(s)), 
                                                      \overline{\varphi}(s))_{H}\,ds.  
\end{align*}
Thus we can show that 
for all $\alpha>0$ and all $\delta>0$  
there exists a constant $C_{3}=C_{3}(\alpha, \delta)>0$ such that 
\begin{align*}
&\frac{1}{2}\|\theta(t)\|_{V^{*}}^2 
+ \frac{\alpha}{8}\int_{0}^{t}\|\overline{\mu}(s)\|_{H}^2\,ds 
+ (1-2\|\pi'\|_{L^{\infty}(\mathbb{R})}^2\alpha-2\delta)
                                       \int_{0}^{t}\|\overline{\varphi}(s)\|_{V}^2\,ds 
\\ 
&+ \frac{1}{2}\|\overline{\sigma}(t)\|_{H}^2 
  + (1-\delta)\int_{0}^{t}\|\overline{\sigma}(s)\|_{V}^2\,ds 
\\ 
&\leq  C_{3}\int_{0}^{t}\|\overline{\sigma}(s)\|_{H}^2\,ds 
         + C_{3}\int_{0}^{t}\|\theta(s)\|_{V^{*}}^2\,ds
\end{align*}
in the same way as in the proof of Lemma \ref{uniquesolPbep}. 
Therefore, choosing $\delta > 0$ and $\alpha>0$ small enough 
and applying the Gronwall lemma,  
we see that $\theta=\overline{\mu}=\overline{\varphi}=\overline{\sigma}=0$, 
which implies that  
$\mu_{1}=\mu_{2}$, $\varphi_{1}=\varphi_{2}$ and $\sigma_{1}=\sigma_{2}$. 
Then we have $\xi_{1}=\xi_{2}$ by \eqref{defsolPbe2}. 
Hence we can establish uniqueness of weak solutions to \ref{Pbeta}.   \qed
\end{prth1.1}
%
%


\section{Proof of Theorem \ref{maintheorem2}}\label{Sec4} 

This section proves Theorem \ref{maintheorem2}. 
The following lemma asserts Cauchy's criterion 
for solutions of \ref{Pbeta} 
and is the key to the proof of Theorem \ref{maintheorem2}.
\begin{lem}\label{lemCauchy}
Let $(\mu_{\beta}, \varphi_{\beta}, \sigma_{\beta}, \xi_{\beta})$ 
be a weak solution of {\rm \ref{Pbeta}} 
and let $\alpha_{0}$ be as in Theorem \ref{maintheorem1}. 
Then there exists $\alpha_{2} \in (0, \alpha_{0}]$ such that 
 for all $\alpha \in (0, \alpha_{2})$
 there exists a constant $C=C(\alpha, T)>0$ such that 
     \begin{align*}
     &\|\mu_{\beta}-\mu_{\eta}\|_{L^2(0, T; H)} 
     + \|\varphi_{\beta}-\varphi_{\eta}\|_{L^2(0, T; V)} 
     + \|\sigma_{\beta}-\sigma_{\eta}\|_{L^{\infty}(0, T; H) \cap L^2(0, T; V)} \\ \notag  
     &+\|
           (\alpha\mu_{\beta}+\varphi_{\beta}+\sigma_{\beta}) 
           - (\alpha\mu_{\eta}+\varphi_{\eta}+\sigma_{\eta})
         \|_{L^{\infty}(0, T; V^{*})} \\ \notag 
     &\leq C(\beta^{1/2} + \eta^{1/2})  
     \end{align*}
for all $\beta\in (0, 1)$ and all $\eta \in (0, 1)$.    
\end{lem}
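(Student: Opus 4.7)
The plan is to adapt the uniqueness argument for \ref{Pbeta} given at the end of the proof of Theorem \ref{maintheorem1}, tracking carefully the $O(\beta^{1/2}+\eta^{1/2})$ remainder that arises because the second equations in \ref{Pbeta} for indices $\beta$ and $\eta$ have different viscous coefficients. Set $\overline{\mu}:=\mu_{\beta}-\mu_{\eta}$, $\overline{\varphi}:=\varphi_{\beta}-\varphi_{\eta}$, $\overline{\sigma}:=\sigma_{\beta}-\sigma_{\eta}$, $\overline{\xi}:=\xi_{\beta}-\xi_{\eta}$ and $\theta:=\alpha\overline{\mu}+\overline{\varphi}+\overline{\sigma}$. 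Adding \eqref{defsolPbe1} and \eqref{defsolPbe3} for $\beta$ and subtracting the same combination for $\eta$, the $p$-terms cancel pairwise and one obtains $\langle\theta_t, v\rangle_{V^{*},V} + (\nabla(\overline{\mu}+\overline{\sigma}),\nabla v)_H = 0$ for every $v\in V$. Testing with $v=F^{-1}\theta$ and using \eqref{defF}--\eqref{innerVstar} reproduces the identity \eqref{toto1}. Subtracting \eqref{defsolPbe2} at $\beta$ and $\eta$ yields
\begin{equation*}
\overline{\mu}
= \beta\,\partial_{t}\varphi_{\beta}-\eta\,\partial_{t}\varphi_{\eta}
+(-\Delta+1)\overline{\varphi}+\overline{\xi}+\pi(\varphi_{\beta})-\pi(\varphi_{\eta}),
\end{equation*}
which I would test against $\overline{\varphi}$, while the difference of \eqref{defsolPbe3} is tested against $\overline{\sigma}$.

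After summing the three resulting identities and invoking the monotonicity of $B$ to discard $\int_{0}^{t}(\overline{\xi},\overline{\varphi})_H\,ds\geq 0$, the structure matches \eqref{toto4} of the uniqueness proof except for one new forcing term, namely $-\int_{0}^{t}(\beta\partial_{t}\varphi_{\beta}-\eta\partial_{t}\varphi_{\eta},\overline{\varphi})_H\,ds$. The rate $\beta^{1/2}+\eta^{1/2}$ comes exactly from here: the a priori estimate \eqref{betaes1} gives a constant $M_{1}'>0$ independent of $\beta$ with
\begin{equation*}
\|\beta\,\partial_{t}\varphi_{\beta}\|_{L^{2}(0,T;H)}
=\beta^{1/2}\,\|\beta^{1/2}\partial_{t}\varphi_{\beta}\|_{L^{2}(0,T;H)}
\leq M_{1}'\,\beta^{1/2},
\end{equation*}
and analogously for $\eta$. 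Cauchy--Schwarz and Young's inequality then yield, for any $\delta>0$ and some $C_{\delta}>0$,
\begin{equation*}
\left|\int_{0}^{t}(\beta\partial_{t}\varphi_{\beta}-\eta\partial_{t}\varphi_{\eta},\overline{\varphi})_H\,ds\right|
\leq \delta\int_{0}^{t}\|\overline{\varphi}(s)\|_{V}^{2}\,ds + C_{\delta}(\beta+\eta),
\end{equation*}
so the $\delta$-piece is absorbed into the coercive $\int_{0}^{t}\|\overline{\varphi}\|_{V}^{2}$ appearing on the left-hand side and the $C_{\delta}(\beta+\eta)$-piece supplies the desired remainder.

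Handling all the remaining terms exactly as in \eqref{toto5}--\eqref{toto8}, one arrives at an inequality of the form
\begin{equation*}
\|\theta(t)\|_{V^{*}}^{2}
+\alpha\int_{0}^{t}\|\overline{\mu}\|_{H}^{2}\,ds
+\int_{0}^{t}\|\overline{\varphi}\|_{V}^{2}\,ds
+\|\overline{\sigma}(t)\|_{H}^{2}
+\int_{0}^{t}\|\overline{\sigma}\|_{V}^{2}\,ds
\leq C_{\alpha}(\beta+\eta)
+C_{\alpha}\int_{0}^{t}\bigl(\|\theta\|_{V^{*}}^{2}+\|\overline{\sigma}\|_{H}^{2}\bigr)\,ds,
\end{equation*}
valid once $\delta$ and $\alpha$ are chosen sufficiently small; this is exactly the step that pins down $\alpha_{2}\in(0,\alpha_{0}]$. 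Gronwall's lemma followed by the elementary bound $(\beta+\eta)^{1/2}\leq \beta^{1/2}+\eta^{1/2}$ then produces the stated estimate. The main obstacle is the same delicate absorption met in the uniqueness proof: each of the cross-terms in $\pi$, in the reaction $\overline{R}:=p((\sigma_{\beta}-\mu_{\beta})-(\sigma_{\eta}-\mu_{\eta}))$, and in the combination $(\theta,\overline{\mu}+\overline{\sigma})_H$ creates a piece of $\|\overline{\mu}\|_{H}^{2}$ that must be dominated by the lone coercive contribution $\alpha\|\overline{\mu}\|_{H}^{2}$; consequently $\alpha_{2}$ must be chosen small depending on $\|\pi'\|_{L^{\infty}(\mathbb{R})}$ and $p$, and this is really the only quantitative point beyond a routine adaptation of the Theorem \ref{maintheorem1} argument.
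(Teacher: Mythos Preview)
Your proposal is correct and follows essentially the same approach as the paper: you mirror the uniqueness argument of Lemma \ref{uniquesolPbep} (equivalently, the end of the proof of Theorem \ref{maintheorem1}), isolate the single new term $-\int_{0}^{t}(\beta\partial_{t}\varphi_{\beta}-\eta\partial_{t}\varphi_{\eta},\overline{\varphi})_{H}\,ds$, and control it via \eqref{betaes1} and Young's inequality to produce the $\beta+\eta$ remainder before applying Gronwall. This is exactly what the paper does in \eqref{jojo1}--\eqref{jojo2}.
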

\begin{proof}
We let $\beta, \eta \in (0, 1)$ and  
put $\overline{\mu}:=\mu_{\beta}-\mu_{\eta}, 
\overline{\varphi}:=\varphi_{\beta}-\varphi_{\eta}, 
\overline{\sigma}:=\sigma_{\beta}-\sigma_{\eta}, 
\overline{\xi}:=\xi_{\beta}-\xi_{\eta}, 
\theta:=\alpha\overline{\mu}+\overline{\varphi}+\overline{\sigma}$.  
Then, since we see from \eqref{defsolPbe2} that 
\begin{align*}
&\int_{0}^{t}\|\overline{\varphi}(s)\|_{V}^2\,ds 
+ \int_{0}^{t}(\overline{\xi}(s), \overline{\varphi}(s))_{H}\,ds 
-\int_{0}^{t} (\overline{\mu}(s), \overline{\varphi}(s))_{H}\,ds 
\\  
&=  -\int_{0}^{t}
       (\beta\partial_{t}\varphi_{\beta}(s)-\eta\partial_{t}\varphi_{\eta}(s), 
                                                                      \overline{\varphi}(s))_{H}\,ds  
    -\int_{0}^{t}(\pi(\varphi_{\beta}(s))-\pi(\varphi_{\eta}(s)), 
                                                      \overline{\varphi}(s))_{H}\,ds,   
\end{align*}
we can prove that 
for all $\alpha>0$ and all $\delta>0$  
there exists a constant $C_{1}=C_{1}(\alpha, \delta)>0$ such that 
\begin{align}\label{jojo1}
&\frac{1}{2}\|\theta(t)\|_{V^{*}}^2 
+ \frac{\alpha}{8}\int_{0}^{t}\|\overline{\mu}(s)\|_{H}^2\,ds 
+ (1-2\|\pi'\|_{L^{\infty}(\mathbb{R})}^2\alpha-2\delta)
                                       \int_{0}^{t}\|\overline{\varphi}(s)\|_{V}^2\,ds 
\\ \notag
&+ \frac{1}{2}\|\overline{\sigma}(t)\|_{H}^2 
  + (1-\delta)\int_{0}^{t}\|\overline{\sigma}(s)\|_{V}^2\,ds 
\\ \notag
&\leq 
         -\int_{0}^{t}
       (\beta\partial_{t}\varphi_{\beta}(s)-\eta\partial_{t}\varphi_{\eta}(s), 
                                                                      \overline{\varphi}(s))_{H}\,ds  
         + C_{1}\int_{0}^{t}\|\overline{\sigma}(s)\|_{H}^2\,ds 
         + C_{1}\int_{0}^{t}\|\theta(s)\|_{V^{*}}^2\,ds
\end{align}
in the same way as in the proof of Lemma \ref{uniquesolPbep}. 
Here, from \eqref{betaes1} the Young inequality yields that  
for all $\delta>0$ there exists a constant $C_{2}=C_{2}(\delta, T)>0$ such that 
\begin{align}\label{jojo2}
-\int_{0}^{t}
       (\beta\partial_{t}\varphi_{\beta}(s)-\eta\partial_{t}\varphi_{\eta}(s), 
                                                                      \overline{\varphi}(s))_{H}\,ds  
\leq \delta\int_{0}^{t}\|\overline{\varphi}(s)\|_{V}^2\,ds.  
       + C_{2}(\beta + \eta). 
\end{align} 
Thus, by virtue of \eqref{jojo1} and \eqref{jojo2}, we have 
\begin{align*} 
&\frac{1}{2}\|\theta(t)\|_{V^{*}}^2 
+ \frac{\alpha}{8}\int_{0}^{t}\|\overline{\mu}(s)\|_{H}^2\,ds 
+ (1-2\|\pi'\|_{L^{\infty}(\mathbb{R})}^2\alpha-3\delta)
                                       \int_{0}^{t}\|\overline{\varphi}(s)\|_{V}^2\,ds 
\\ \notag
&+ \frac{1}{2}\|\overline{\sigma}(t)\|_{H}^2 
  + (1-\delta)\int_{0}^{t}\|\overline{\sigma}(s)\|_{V}^2\,ds 
\\ \notag
&\leq  C_{1}\int_{0}^{t}\|\overline{\sigma}(s)\|_{H}^2\,ds 
         + C_{1}\int_{0}^{t}\|\theta(s)\|_{V^{*}}^2\,ds 
         + C_{2}(\beta + \eta). 
\end{align*} 
Hence, choosing $\delta > 0$ and $\alpha>0$ small enough 
and applying the Gronwall lemma, 
we can obtain Lemma \ref{lemCauchy}.  
\end{proof}
%
%
\begin{prth1.2} 
Let $\alpha_{2}$ be as in Lemma \ref{lemCauchy} 
and let $\alpha \in (0, \alpha_{2})$. 
Then, by \eqref{betaes1}, \eqref{betaes2} and Lemma \ref{lemCauchy}, 
there exist some functions 
\begin{align*}
&\mu \in L^{\infty}(0, T; H) \cap L^2(0, T; V), 
    \\
    &\varphi \in L^{\infty}(0, T; V) \cap L^2(0, T; W), 
    \\
    &\sigma \in H^1(0, T; V^{*}) \cap L^2(0, T; V), 
    \\
    &\xi \in L^2(0, T; H)
\end{align*}
such that 
$\alpha\mu+\varphi \in H^1(0, T; V^{*})$   
and 
\begin{align*}
&\mu_{\beta} \to \mu 
\quad \mbox{weakly$^{*}$ in}\ L^{\infty}(0, T; H)\ 
\mbox{and weakly$^{*}$ in}\ L^2(0, T; V), 
\\ 
&\mu_{\beta} \to \mu 
\quad \mbox{in}\ L^{2}(0, T; H), 
\\ 
&\varphi_{\beta} \to \varphi 
\quad \mbox{weakly$^{*}$ in}\ L^{\infty}(0, T; V), 
\\
&\varphi_{\beta} \to \varphi 
\quad \mbox{in}\ L^{2}(0, T; V), 
\\
&\sigma_{\beta} \to \sigma
\quad \mbox{in}\ L^{\infty}(0, T; H) \cap L^2(0, T; V), 
\\
&(\alpha\mu_{\beta}+\varphi_{\beta})_{t} 
\to (\alpha\mu+\varphi)_{t} 
\quad \mbox{weakly in}\ L^2(0, T; V^{*}), 
\\
&(\alpha\mu_{\beta}+\varphi_{\beta}+\sigma_{\beta})_{t} 
\to (\alpha\mu+\varphi+\sigma)_{t} 
\quad \mbox{in}\ L^{\infty}(0, T; V^{*}), 
\\
&\beta\partial_{t}\varphi_{\beta} \to 0
\quad \mbox{in}\ L^2(0, T; H), 
\\
&(\sigma_{\beta})_{t} \to \sigma_{t}  
\quad \mbox{weakly in}\ L^2(0, T; V^{*}), 
\\
&\xi_{\beta} \to \xi 
\quad \mbox{weakly in}\ L^2(0, T; H), 
\\
&(-\Delta+1)\varphi_{\beta}  \to (-\Delta+1)\varphi  
\quad \mbox{weakly in}\ L^2(0, T; H) 
\end{align*}
as $\beta=\beta_{j}\searrow0$.  
Hence we can show \eqref{defsolP1}-\eqref{defsolP3} and \eqref{eres}. 
Moreover, we can prove \eqref{defsolP4} 
by \eqref{betaes1}, \eqref{betaes2} and Lemma \ref{keylemmaVstar}. 
Thus we can establish existence of weak solutions to \eqref{P}.     

Next we prove uniqueness of weak solutions to \eqref{P}. 
We let $(\mu_{j}, \varphi_{j}, \sigma_{j}, \xi_{j})$, $j=1, 2$, 
be two weak solutions of \eqref{P}, 
and 
put $\overline{\mu}:=\mu_{1}-\mu_{2}, 
\overline{\varphi}:=\varphi_{1}-\varphi_{2}, 
\overline{\sigma}:=\sigma_{1}-\sigma_{2}, 
\overline{\xi}:=\xi_{1}-\xi_{2}, 
\theta:=\alpha\overline{\mu}+\overline{\varphi}+\overline{\sigma}$.  
Then we infer from \eqref{defsolP2} that 
\begin{align*}
&\int_{0}^{t}\|\overline{\varphi}(s)\|_{V}^2\,ds 
+ \int_{0}^{t}(\overline{\xi}(s), \overline{\varphi}(s))_{H}\,ds 
-\int_{0}^{t} (\overline{\mu}(s), \overline{\varphi}(s))_{H}\,ds 
\\  
&= -\int_{0}^{t}(\pi(\varphi_{1}(s))-\pi(\varphi_{2}(s)), 
                                                      \overline{\varphi}(s))_{H}\,ds, 
\end{align*}
and hence we can show that 
for all $\alpha>0$ and all $\delta>0$  
there exists a constant $C_{1}=C_{1}(\alpha, \delta)>0$ such that 
\begin{align*}
&\frac{1}{2}\|\theta(t)\|_{V^{*}}^2 
+ \frac{\alpha}{8}\int_{0}^{t}\|\overline{\mu}(s)\|_{H}^2\,ds 
+ (1-2\|\pi'\|_{L^{\infty}(\mathbb{R})}^2\alpha-2\delta)
                                       \int_{0}^{t}\|\overline{\varphi}(s)\|_{V}^2\,ds 
\\ 
&+ \frac{1}{2}\|\overline{\sigma}(t)\|_{H}^2 
  + (1-\delta)\int_{0}^{t}\|\overline{\sigma}(s)\|_{V}^2\,ds 
\\ 
&\leq  C_{1}\int_{0}^{t}\|\overline{\sigma}(s)\|_{H}^2\,ds 
         + C_{1}\int_{0}^{t}\|\theta(s)\|_{V^{*}}^2\,ds
\end{align*}
in the same way as in the proof of Lemma \ref{uniquesolPbep}. 
Hence, choosing $\delta > 0$ and $\alpha>0$ small enough 
and applying the Gronwall lemma yield that 
$\theta=\overline{\mu}=\overline{\varphi}=\overline{\sigma}=0$, 
which implies that  
$\mu_{1}=\mu_{2}$, $\varphi_{1}=\varphi_{2}$ and $\sigma_{1}=\sigma_{2}$. 
Then we have $\xi_{1}=\xi_{2}$ by \eqref{defsolP2}.  

Therefore we can obtain Theorem \ref{maintheorem2}. 
\qed
\end{prth1.2}

%
 
\end{document}